\newtheorem{theorem}{Theorem}[section]
\newtheorem{corollary}[theorem]{Corollary}
\newtheorem{lemma}[theorem]{Lemma}
\newtheorem{proposition}[theorem]{Proposition}
\newtheorem{remark}[theorem]{Remark}
\newtheorem*{hypothesis}{Hypothesis $\mathcal{H}$}
\newcommand{\wt}[1]{\widetilde{#1}}
\newcommand{\Div}{\mbox{div}}
\newcommand{\pD}{\partial D}
\newcommand{\pDe}{\partial D_{\ve}}
\newcommand{\ve}{\varepsilon}
\newcommand{\vp}{\varphi}
\newcommand{\D}[2]{\frac{\partial #1}{\partial #2}}
\newcommand{\bR}{\mathbb{R}}
\newcommand{\mK}{K_{I}}
\newcommand{\ld}{\lambda}
\newcommand{\Om}{\Omega}
\newcommand{\om}{\omega}
\newcommand{\La}{\Delta}
\begin{document}
\title[Stable reconstruction of GIBCs]{Stable reconstruction of generalized impedance boundary conditions}
\author{Laurent Bourgeois$^{1}$, Nicolas Chaulet $^{1,2}$, Houssem Haddar $^2$}
\address{$^1$Laboratoire POEMS, 32, Boulevard Victor, 75739 Paris Cedex 15, France}
\address{$^2$INRIA Saclay Ile de France et Ecole Polytechnique, Route de Saclay, 91128 Palaiseau Cedex, France}
\eads{laurent.bourgeois@ensta.fr,nicolas.chaulet@inria.fr, houssem.haddar@inria.fr}

\begin{abstract}
We are interested in  the identification of a Generalized Impedance Boundary Condition  from the far--fields created by one or several incident plane waves at a fixed frequency. We focus on the particular case where this boundary condition is expressed with the help of a second order surface operator: the inverse problem then amounts to retrieve the two functions $\lambda$ and $\mu$ that define this boundary operator.
We first derive a global Lipschitz stability result for the identification of $\ld$ or $\mu$ from the far--field for bounded piecewise constant impedance coefficients and we give a new type of stability estimate when inexact knowledge of the boundary is assumed.
We then introduce an optimization method to identify $\lambda$ and $\mu$, using in particular a $H^1$-type regularization of the gradient.
We lastly show some numerical results in two dimensions, including a study of the impact of some various parameters, and by assuming either an exact knowledge of the shape of the obstacle or an approximate one.
\end{abstract}
\maketitle

\section{Introduction}
We are interested in this work in the identification of boundary coefficients in so--called Generalized Impedance Boundary Conditions (GIBC) on a given obstacle from measurements of the scattered field far from the obstacle associated with one or several incident plane waves at a given frequency. More specifically we shall consider boundary conditions of the type
$$
\D{u}{\nu} + \Div_{\pD} (\mu\nabla_{\pD}u)+\ld u=0 \quad \mbox{ on } \pD
$$
where $\mu$ and $\ld$ are complex valued functions, $\Div_{\pD}$ and $\nabla_{\pD}$ are respectively the surface divergence and the surface gradient on $\pD$ and $\nu$ denotes the outward unit normal on $\pD$. In the case $\mu=0$ this condition is the classical impedance boundary condition (also known as the Leontovitch boundary condition) used for instance to model imperfectly conducting obstacles. The wider class of GIBCs is commonly used to model thin coatings  or gratings as well as more accurate models for imperfectly conducting obstacles (see \cite{BenLem96,DurHadJol06,HadJol02,HadJolNgu05}). Addressing this problem is motivated by applications in non destructive testing, identification problems or modelling related to stealth technology or antennas. 
For instance, one may think of ultrasonic non destructive testing for the TE (transverse electric) polarization of a medium which contains a perfect conductor coated with a thin layer.
In this case a GIBC as presented above is satisfied with $\mu=\delta$ and $\ld=\delta k^2 n$, where $k$ denotes the wave number, $\delta$ is the width of the layer and $n$ is the mean value of the layer index with respect to the normal coordinate.

The use of GIBCs  has at least two advantages for  the inverse problem as compared to the use of an exact model. First, the identification problem becomes less unstable. Second, since solving the forward problem with GIBC is less time consuming, using such model in iterative non--linear methods is more advantageous.

The classical case $\mu=0$ has been addressed in the literature by several authors, from the mathematical point of view in \cite{Sin06,Lab} and from the numerical point of view in \cite{CaCoMo,CaCoMo10}. The problem of recovering both the shape of the obstacle and the impedance coefficient is also considered in \cite{LiuNakSin07,Ser,HeKinSin09,NakSin07}. The case of GIBC has only been recently addressed in \cite{BouHad} where uniqueness and local stability results have been reported. 

The present work complements these first investigations in two directions. The first one is on the theoretical level. Initially we derive a global Lipschitz stability estimate for bounded piecewise constant impedance coefficients $\ld$ and $\mu$. This result is similar to the one obtained in \cite{Sin07} for classical impedances and the Laplace equation in a bounded domain. Other or complementary stability results for the inverse coefficient problem can be found in \cite{AlDeRo03,AleVes05,DiCRon03,Ing97}. The main particularity and difficulty of our analysis are related to the treatment of the second order surface operator appearing in the GIBC. Also in contrast with the work in \cite{Sin07} we make use here of Carleman estimates instead of doubling properties as main tool in deriving the stability estimates. We then prove the stability of the reconstruction of the impedances when only inexact knowledge of the geometry is available. The proof of this result relies on two properties:
\begin{itemize}
\item continuity of the measurements with respect to the obstacle, uniformly with respect to the impedance coefficients,
\item stability for the inverse coefficient problem for a known obstacle. 
\end{itemize}
  Up to our knowledge, this kind of stability result is new. It would be useful for instance when the geometry has been itself reconstructed from measurements using some  qualitative methods (e.g. sampling methods \cite{CakCol,GriKir08}) and therefore is known only approximately. It may  also be useful in understanding the convergence of iterative methods to reconstruct both the obstacle and the coefficients where the updates for the geometry and the physical parameters are made alternatively.
Let us also mention that the proof of our stability result can be straightforwardly extended to other identification problems that enjoy the two properties indicated above.

In a second part, we investigate a numerical optimization method to identify the boundary coefficients. We propose a reconstruction procedure based on a steepest descent method with $H^1(\pD)$ regularization of the gradient. The accuracy and stability of the inversion scheme is tested through various numerical experiments in a $2D$ setting. Special attention is given to the case of non regular coefficients and inexact knowledge of the boundary $\pD$.

The outline of our article is the following. In section $2$ we introduce and study the forward and inverse problems. Section $3$ is dedicated to the derivation of a stability result with inexact geometry. The numerical part is the subject of section $4$.
\section{The forward and inverse problems}
\label{SecDirect}
\subsection{The forward scattering problem}
\noindent Let $D$ be an open bounded domain of $\bR^d$, $d=2$ or $3$ with a Lipschitz continuous boundary $\pD$, $\Om:=\bR^d \setminus \overline{D}$ and $(\ld,\mu)\in (L^{\infty}(\pD))^2$ be the impedance coefficients. The scattering problem with generalized impedance boundary conditions (GIBC) consists in finding $u = u^s+ u^i$ such that
\renewcommand{\arraystretch}{1.5}
\begin{equation}
\label{PbInit}
\cases{\La u + k^2u = 0  \quad \mbox{in} \; \Om, \\
\Div _{\pD} (\mu \nabla_{\pD} u) +\frac{\partial u}{\partial \nu} + \ld u =0  \quad \mbox{on} \; \pD}
\end{equation}
and $u^s$ satisfies the Sommerfeld radiation condition
\begin{equation*}
 \lim \limits_{R \to \infty} \int_{|x|=R}\left|\D{u^s}{r} - iku^s \right|^2ds =0
\end{equation*}
where $k$ is the wave number, $u^i = e^{ik\hat{d}\cdot x}$ is an incident plane wave where $\hat{d} $ belongs to the unit sphere of $\bR^d$ denoted $S^{d-1}$ and  $u^s \in V :=  \{ v \in \mathcal{D}'(\Om) ,\varphi v \, \in \, H^1(\Om) \, \forall \, \varphi \, \in \mathcal{D}(\bR^d) \mbox{  and  }  v_{|\pD} \, \in \,  H^1(\pD) \}$ is the scattered field.  For $v \in H^1(\pD)$ the surface gradient $\nabla_{\pD}v$ lies in  $L^2_t(\pD):=\{V \in (L^2(\pD))^d\, ,\, V\cdot\nu=0\}$. Moreover, $\Div _{\pD} (\mu \nabla_{\pD} u)$ is defined in $H^{-1}(\pD)$ for $\mu \in L^{\infty}(\pD)$ by
\[
 \fl \langle \Div _{\pD} (\mu \nabla_{\pD} u), v \rangle_{H^{-1}(\pD),H^{1}(\pD)} := - \int_{\pD} \mu \nabla_{\pD}u \cdot\nabla_{\pD}v ds \quad \forall v \in H^{1}(\pD) \,.
\]
 Let us define $\Om_R := \Om \cap B_R$ where $B_R$ is the ball of radius $R$ such that $D \subset B_R$ and let $S_R : H^{1/2}(\partial B_R) \mapsto  H^{-1/2}(\partial B_R) $ be the Dirichlet--to--Neumann map defined for $g \in H^{1/2}(\partial B_R)$ by $S_Rg :={ \partial{u^e}/\partial{r}}|_{{\partial B_R}}$ where $u^e$ is the radiating solution of the Helmholtz equation outside $B_R$  and $u^e = g$ on $\partial B_R$.  Then solving (\ref{PbInit}) is equivalent to find  $u$ in $V_R:=\{v \, \in \, H^1(\Om _R);\, v_{|\pD} \, \in \,  H^1(\pD)\}$ such that:
\begin{equation}
\label{PbBorne}
 \mathcal{P}(\ld,\mu,\pD) \qquad
\cases{
\La u + k^2u = 0  \quad \mbox{inside} \; \Om _R,\\
\Div _{\pD} (\mu \nabla_{\pD} u) +\frac{\partial u}{\partial \nu} + \ld u =0  \quad \mbox{on} \; \pD,\\
 \D{u}{r} - S_R(u) =\D{u^i}{r}-S_R(u^i) \; \mbox{ on } \partial B_R .
}
\end{equation}
Remark that the space $V_R$ equipped with the graph norm is a Hilbert space. We define the operator $A$ of $V_R$ and the bilinear form $a$ of $V_R\times V_R$ by
\[
\fl
 (Au,v)_{V_R} = a(u,v):= \int_{\Om _R} (\nabla u \cdot \nabla \overline{v} -k^2 u\overline{v}) dx 
	+\int_{\pD}(\mu \nabla_{\pD}u \cdot \nabla_{\pD} \overline{v} - \ld u\overline{v})ds
		-\langle S_R u,v \rangle,
\]
for $(u,v) \in V_R\times V_R$ where $\langle \cdot,\cdot\rangle$ is the duality product between $H^{-1/2}(\partial B_R)$ and $H^{1/2}(\partial B_R)$. Furthermore, we define $l$ a linear form on $V_R$ and  $F\in V_R$ by
\[
(F,v)_{V_R}= l(v) :=\int_{\partial B_R} \left(\D{u^i}{r} - S_R(u^i)\right)\overline{v} ds
\]
for all $v\in V_R$. Therefore $u$ is solution to  $\mathcal{P}(\ld,\mu,\pD)$ if and only if 
\begin{equation}
 \label{EqVarForm}
a(u,v) = l(v) \qquad \forall v \in V_R
\end{equation}
or $Au = F$.

\begin{hypothesis}
$(\ld,\mu) \in (L^\infty(\pD))^2$ are such that
\[
 \Im m(\ld)\geq0\,,\; \Im m(\mu)\leq 0 \quad \mbox{ a.e. in } \pD
\]
and there exists $c>0$ such that 
\[
 \Re e(\mu) \geq c \qquad \mbox{ a.e. in } \pD.\]
\end{hypothesis}
In the assumption $\mathcal{H}$, the signs of $\Im m(\ld)$ and $\Im m(\mu)$ are governed by physics, since these quantities represent absorption.
On the contrary, the assumption on $\Re e (\mu)$ is technical and ensures coercivity. However, it is satisfied in the example of a medium with a thin coating which is presented in the introduction.
 In the following,  $K$ will denote a compact set of $(L^\infty(\pD))^2$ such that there exists a constant $c_K>0$ for which assumption $\mathcal{H}$ holds with $c=c_K$ for all $(\ld,\mu) \in K$. 
\begin{proposition}
\label{PropBound}
If assumption $\mathcal{H}$ is satisfied then problem $\mathcal{P}(\ld,\mu,\pD)$ has a unique solution $u$ in $V_R$. In addition there exists a constant $C_K>0$ such that
\begin{equation}
\label{EqUnifBound}
   |||A^{-1}|||\leq C_K 
\end{equation}
for all $(\ld,\mu) \in K$ where $|||\cdot|||$ stands for the operators norm.
\end{proposition}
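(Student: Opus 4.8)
The plan is to prove existence and uniqueness via the Fredholm alternative applied to $A$, and then to deduce the uniform bound (\ref{EqUnifBound}) by a compactness argument based on the compactness of $K$ in $(L^\infty(\pD))^2$. \emph{Gårding inequality and Fredholmness:} first I would split the form as $a = a_+ + b$, where $a_+$ gathers the coercive contributions and $b$ the lower order ones. I put into $a_+$ the volume term $\int_{\Om_R}(\nabla u\cdot\nabla\overline v + u\overline v)\,dx$, the principal surface term $\int_{\pD}\mu\nabla_{\pD}u\cdot\nabla_{\pD}\overline v\,ds$ and the coercive part of the Dirichlet--to--Neumann operator, and into $b$ the remaining terms $-\int_{\Om_R}(k^2+1)u\overline v\,dx$, $-\int_{\pD}\ld u\overline v\,ds$ and the compact part of $S_R$. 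Taking real parts and using assumption $\mathcal{H}$, namely $\int_{\pD}\Re e(\mu)|\nabla_{\pD}u|^2\,ds \geq c_K\|\nabla_{\pD}u\|^2_{L^2(\pD)}$, together with the classical decomposition $S_R = S_R^\infty + T$ with $-\Re e\langle S_R^\infty u,u\rangle \geq 0$ and $T$ compact, and the trace inequality $\|u\|_{L^2(\pD)} \leq C\|u\|_{H^1(\Om_R)}$ (which recovers the missing $L^2(\pD)$ part of the $V_R$ norm), I obtain a Gårding inequality
\[
\Re e\, a(u,u) \geq \alpha\,\|u\|^2_{V_R} - C\big(\|u\|^2_{L^2(\Om_R)} + \|u\|^2_{L^2(\pD)} + \|u\|^2_{L^2(\partial B_R)}\big),
\]
where $\alpha>0$ and $C$ depend only on $c_K$, $k$ and $R$, hence are uniform over $K$. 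Since the embeddings $H^1(\Om_R)\hookrightarrow L^2(\Om_R)$, $H^1(\pD)\hookrightarrow L^2(\pD)$ and the trace onto $L^2(\partial B_R)$ are compact, $b$ defines a compact operator $B$, while $a_+$ is coercive and continuous so that $A_+$ is invertible by Lax--Milgram. Thus $A = A_+ + B$ is Fredholm of index zero.

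\emph{Injectivity:} next I would show $Au=0 \Rightarrow u=0$. Taking $v=u$ and imaginary parts gives
\[
\int_{\pD}\Im m(\mu)|\nabla_{\pD}u|^2\,ds - \int_{\pD}\Im m(\ld)|u|^2\,ds - \Im m\langle S_R u, u\rangle = 0.
\]
By $\mathcal{H}$ the first two integrands are non-positive, and the radiation condition yields $\Im m\langle S_R u,u\rangle \geq 0$; hence the three terms vanish separately. In particular $\Im m\langle S_R u,u\rangle=0$ forces the far field of the radiating extension of $u$ outside $B_R$ to vanish, so by Rellich's lemma $u\equiv 0$ outside $B_R$, and unique continuation across the connected domain $\Om$ gives $u=0$. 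Combined with the Fredholm property, this establishes existence, uniqueness and boundedness of $A^{-1}$ for each fixed $(\ld,\mu)\in K$.

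\emph{Uniform bound:} finally, for (\ref{EqUnifBound}) I would argue by contradiction. If $|||A^{-1}|||$ were not uniformly bounded on $K$, there would exist $(\ld_n,\mu_n)\in K$ and $u_n\in V_R$ with $\|u_n\|_{V_R}=1$ and $F_n:=A_n u_n \to 0$. By compactness of $K$ I extract $(\ld_n,\mu_n)\to(\ld,\mu)\in K$ in $(L^\infty(\pD))^2$, and by boundedness a subsequence $u_n\rightharpoonup u$ in $V_R$ with $u_n\to u$ strongly in $L^2(\Om_R)$, $L^2(\pD)$ and $L^2(\partial B_R)$. Applying the Gårding inequality to $u_n$ and using $\Re e\, a_n(u_n,u_n)=\Re e\,(F_n,u_n)_{V_R}\to 0$ shows that the $L^2$ norms of the limit are bounded below by a positive constant, so $u\neq 0$. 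On the other hand, the $L^\infty$ convergence of the coefficients together with the weak convergence $u_n\rightharpoonup u$ let me pass to the limit in $a_n(u_n,v)=(F_n,v)_{V_R}$ and obtain $a(u,v)=0$ for all $v\in V_R$ with the limit coefficients $(\ld,\mu)$; injectivity from the previous step then gives $u=0$, a contradiction.

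The main obstacle is this last step: the Gårding inequality alone prevents the minimizing sequence from collapsing in $L^2$, while injectivity of the limit problem forces it to, and reconciling the two requires both that the coercivity constant $\alpha$ be uniform over $K$ and that the limit coefficients still satisfy $\mathcal{H}$ — which is exactly what the compactness of $K$ guarantees. The properties of $S_R$ used above (its coercive/compact splitting and the sign of its imaginary part) are classical and would be quoted rather than reproved.
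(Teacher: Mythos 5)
Your proof is correct, and it is precisely the ``quite classical'' argument that the paper itself does not spell out but instead defers to its reference \cite{BoChHa10}: a G\aa rding inequality and Fredholm alternative built on the coercive/compact splitting of the form (including the standard decomposition and sign properties of $S_R$), injectivity via imaginary parts, Rellich's lemma and unique continuation, and a compactness--contradiction argument over $K$ for the uniform bound. The one point to keep uniform over $K$, which you correctly handle via compactness (hence boundedness) of $K$ in $(L^\infty(\pD))^2$ and the common constant $c_K$ in assumption $\mathcal{H}$, is that both the coercivity constant $\alpha$ and the lower-order constant $C$ in the G\aa rding inequality are independent of $(\ld,\mu)$.
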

\begin{proof}
The proof is quite classical and we refer to \cite{BoChHa10} for details.
\end{proof}
Under the sufficient conditions $\mathcal{H}$ on the impedance coefficients $\ld$ and $\mu$ that ensure existence and uniqueness for the forward problem we can study the inverse coefficients problem and this is the aim of the next section. 
\subsection{Formulation of the inverse problem}
We recall the following asymptotic behaviour for the scattered field (see \cite{ColKre}):
\[
 u^s(x) \sim \frac{e^{ikr}}{r^{(d-1)/2}} \left(u^{\infty}(\hat{x}) + O\left(\frac{1}{r}\right)\right)\qquad r \longrightarrow +\infty
\]
uniformly for all the directions $\hat{x} = x/r \in S^{d-1}$. The far--field $u^{\infty} \in L^2(S^{d-1})$ is given by:
\begin{equation}
\label{uinf}
\quad u^{\infty}(\hat{x}) = \int_{\Gamma}\left( u^s(y)\frac{\partial \Phi ^{\infty}(y,\hat{x})}{\partial\nu(y)} - \frac{\partial u^s(y)}{\partial\nu}\Phi ^{\infty}(y,\hat{x}) \right) ds(y) \quad \forall \hat{x} \in S^{d-1},
\end{equation}
 where $\Gamma$ is the boundary  of some regular open domain that contains $D$ and $\Phi ^{\infty}$ is the far--field associated with the  Green function of the Helmholtz equation defined in $\bR^2$ by $ \Phi ^{\infty} (y,\hat{x}):= \frac{e^{i\pi/4}}{\sqrt{8\pi k}} e^{-iky\cdot \hat{x}}$ and in $\bR^3$ by $ \Phi ^{\infty} (y,\hat{x}) := \frac{1}{4\pi} e^{-iky\cdot \hat{x}}$.
 \begin{remark}
 \label{ReLimited}
Since $u^\infty$ is an analytical function on $S^{d-1}$ (see \cite{ColKre}), assuming that  the far--field is known everywhere on $S^{d-1}$ is equivalent to assuming that it is known on a non--empty open set of $S^{d-1}$.
\end{remark}
 Let us define the far--field map
\[
 T : \quad (\ld,\mu,\pD) \rightarrow u^{\infty}
\]
where $u^{\infty}$ is the far--field associated with the scattered field $u^s = u - u^i$ and $u$ is the unique solution of problem $\mathcal{P}(\ld,\mu,\pD)$. The inverse coefficients problem is the following: given  an obstacle $D$, an incident direction $\hat{d} \in S^{d-1}$ and  its associated far--field pattern $u^{\infty}$ for all $\hat{x} \in S^{d-1}$, reconstruct the corresponding impedance coefficients $\ld$ and $\mu$. In other words the inverse problem amounts to invert the map $T$ with respect to the coefficients $\ld$ and $\mu$ for a given $\pD$. The first natural question related to this inverse problem is injectivity of $T$ and stability properties of the inverse map. These questions have been addressed in \cite{BouHad} where for instance results on local stability  in compact sets have been reported. Our subsequent analysis on the stability of the reconstruction of $\ld$ and $\mu$ with respect to perturbed obstacles  will depend on  stability for the inverse map of $T$. We shall first give an improvement of the stability results in \cite{BouHad} for the reconstruction of piecewise constant impedance values. Let us notice that uniqueness (and therefore stability results) with single incident wave fails in general except if one assumes that parts of $\ld$ and $\mu$ are known \textit{a priori}. Moreover we may need to add some restriction for the incident direction or for the geometry of the obstacle (see \cite{BouHad} for more details). 
\subsection{A global stability estimate for the generalized impedance functions}
\label{SecStab}
In this section we shall assume that $\pD$ is a $C^{3}$ boundary and ${\mK}$ is a  compact set of $(L^{\infty}(\pD))^2$ of piecewise constant functions defined as follows: let $I$ be an integer and $(\pD_i)_{i=1,\cdots,I}$ be $I$ non--overlapping open sets of $\pD$ such that $\cup_{i=1}^I\overline{\pD_i} = \pD$. Then  $(\ld,\mu) \in \mK$ if there exists $I$ constants $(\ld_i)_{i=1,\cdots,I}$ and $(\mu_i)_{i=1,\cdots,I}$  respectively such that for $x \in \pD$
\[
   \ld(x) = \sum_{i=1}^I\ld_i \chi_{\,\pD_i}(x) \qquad \mbox{and} \qquad \mu(x) = \sum_{i=1}^I\mu_i \chi_{\,\pD_i}(x).
\]
and there exists $c_{\mK}>$ and $C_{\mK}>0$ such that:
\begin{equation*}
   c_{\mK} \leq \Re e(\mu_i)\leq C_{\mK}, \quad  -C_{\mK} \leq \Im m(\mu_i) \leq 0
\end{equation*}
and
\[
   0 \leq \Im m(\ld_i) \leq C_{\mK}, \quad |\Re e(\ld_i)|\leq C_{\mK}
\]
for all $i$. From now on, $C_{\mK}$ and $c_{\mK}$ will be generic constants that can change, but they remain independent of $\ld$ and $\mu$. Using that $\mu$ is a piecewise constant function, we shall first explicit a regularity result for the solution  $u$ of the scattering problem.\\ 
For convenience, for sufficiently small $\rho>0$ we denote by $\Xi_R^\rho$ the subset of $\Omega_R$ defined as follows.
If $\delta D$ denotes the set of all points of $\pD$ which are shared by two sets $\overline{\pD_i}$ and $\overline{\pD_j}$ for $i,j=1,..,I$ and $i \neq j$,
we have
\[\Xi_R^\rho =\{x \in \Omega_R,\, d(x,\delta D)>\rho\},\]
where $d$ denotes the distance function.
\begin{lemma}
\label{PropReg}
There exists a constant $C_{\mK}$ (depending on $\rho$ and $R$) such that for all $(\ld,\mu)\in \mK$ the solution $u$ to $\mathcal{P}(\ld,\mu,\pD)$ satisfies
\[
   \|u\|_{H^3(\Xi_R^\rho)} \leq C_{\mK}.
\]
\end{lemma}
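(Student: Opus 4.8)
The plan is to combine the uniform a~priori bound of Proposition~\ref{PropBound} with a localized elliptic bootstrap, treating separately the interior, the artificial boundary $\partial B_R$, and the physical boundary $\pD$ away from the interface set $\delta D$. First I would record the starting point: since $u=A^{-1}F$ with $F$ depending only on the fixed, smooth incident wave $u^i$, estimate~(\ref{EqUnifBound}) gives $\|u\|_{V_R}\leq C_{\mK}$ uniformly in $(\ld,\mu)\in\mK$, and in particular $\|u\|_{H^1(\Om_R)}\leq C_{\mK}$ and $\|u_{|\pD}\|_{H^1(\pD)}\leq C_{\mK}$. The two easy regions are then standard: in the interior $\La u=-k^2u$ has smooth coefficients, so classical interior estimates give $C^\infty$ bounds controlled by $\|u\|_{H^1}$; near $\partial B_R$ the condition $\D{u}{r}-S_R u=\D{u^i}{r}-S_R u^i$ has a smooth right--hand side and $S_R$ is a classical first--order operator with smooth symbol, so elliptic regularity up to $\partial B_R$ again yields an $H^3$ bound. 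These contributions to $\|u\|_{H^3(\Xi_R^\rho)}$ are therefore uniform.

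The core is the boundary regularity near a point of $\pD$ lying at distance $>\rho$ from $\delta D$. There $\mu\equiv\mu_i$ and $\ld\equiv\ld_i$ are constant, so $\Div_{\pD}(\mu\nabla_{\pD}u)=\mu_i\La_{\pD}u$ and the GIBC reads
\[
\mu_i\,\La_{\pD}u + \D{u}{\nu} + \ld_i u = 0 \qquad \mbox{on } \pD,
\]
where $\La_{\pD}$ is the Laplace--Beltrami operator. I would run an alternating bootstrap between bulk and surface estimates, all localized by a cut--off supported in $\Xi_R^{\rho/2}$. First, since $\La u=-k^2u\in L^2$ the Neumann trace satisfies $\D{u}{\nu}\in H^{-1/2}(\pD)$; combined with $u_{|\pD}\in H^1(\pD)$, the boundary condition gives $\La_{\pD}u=-\mu_i^{-1}(\D{u}{\nu}+\ld_i u)\in H^{-1/2}(\pD)$ locally, so interior elliptic regularity for $\La_{\pD}$ yields $u_{|\pD}\in H^{3/2}(\pD)$. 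Then, with Dirichlet datum in $H^{3/2}(\pD)$ and $\La u\in H^1$, boundary elliptic regularity gives $u\in H^2$ locally, hence $\D{u}{\nu}\in H^{1/2}(\pD)$; re--inserting in the boundary condition gives $\La_{\pD}u\in H^{1/2}(\pD)$ and thus $u_{|\pD}\in H^{5/2}(\pD)$. Finally, a bulk estimate with $H^{5/2}$ Dirichlet datum and $\La u=-k^2u\in H^2$ produces $u\in H^3$ locally. The coercivity hypothesis makes this legitimate and uniform: $\Re e(\mu_i)\geq c_{\mK}$ forces $|\mu_i|\geq c_{\mK}$, so $|\mu_i^{-1}|\leq c_{\mK}^{-1}$, and together with $|\ld_i|,|\mu_i|\leq C_{\mK}$ every constant entering the bootstrap is bounded uniformly over $\mK$; combined with the uniform $H^1$ start, this delivers $\|u\|_{H^3(\Xi_R^\rho)}\leq C_{\mK}$.

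The step I expect to be the main obstacle is the surface elliptic regularity for $\La_{\pD}$ pushed up to $H^{5/2}(\pD)$: since $\pD$ is only $C^3$, the induced metric is $C^2$ and the coefficients of $\La_{\pD}$ are merely $C^1$, which sits exactly at the threshold needed to gain two derivatives and reach $H^{5/2}$ (and hence $H^3$ in the bulk). This is precisely why the $C^3$ regularity of $\pD$ is assumed, and it must be tracked carefully so that the fractional surface estimates are genuinely available. A secondary technical point is the commutation of the localizing cut--off with $\La$, $\La_{\pD}$ and the trace operators: the cut--off is chosen to vanish in the $\rho/2$--neighborhood of $\delta D$, so that no contribution from the jumps of $\ld$ and $\mu$ ever enters the surface equation, at the cost of lower--order commutator terms; these are absorbed by the norms already controlled at the previous stage of the bootstrap, and they are what make the final constant depend on $\rho$ (and on $R$), as announced in the statement.
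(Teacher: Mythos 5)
Your proof follows essentially the same route as the paper's: the identical alternating bulk/surface bootstrap $H^{-1/2}\to H^{3/2}\to H^{2}\to H^{5/2}\to H^{3}$, driven by the local constancy of $\mu$, the uniform bound of Proposition~\ref{PropBound}, and elliptic regularity for the Laplace--Beltrami operator written in divergence form in local charts (the paper gets the $H^{3/2}$ step by interpolating the $L^2\to H^2$ chart estimate, which is the substance of your ``interior elliptic regularity for $\La_{\pD}$''). One small correction to your closing remark: a $C^3$ boundary yields $C^2$ (not merely $C^1$) coefficients $a^{ij}=\sqrt{{\rm det}\,g}\,g^{ij}$, and it is precisely this $C^2$ coefficient regularity that the paper uses to reach $H^{5/2}$ on the surface.
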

\begin{proof}
From the definition of $\mu$ we get
\[
   \mu \Delta_{\pD} u + \D{u}{\nu} +\ld u = 0 \quad \mbox{ on   } \quad \partial \Xi_R^\rho \cap \pD
\]
where $\Delta_{\partial D} u:={\rm div}_{\partial D} (\nabla_{\partial D} u)$ is the Laplace--Beltrami operator. Since $\Delta u = -k^{2}u$ in $\Om_{R}$ we have 
by a standard trace result in space $\{u \in H^1(\Om_R),\,\Delta u \in L^2(\Om_R)\}$
\begin{equation}
\label{trace} 
\left \|\D{u}{\nu}\right\|_{H^{-1/2}(\pD)} \leq C \|u\|_{V_{R}}.
 \end{equation}
Now we consider the regularity of $u$ solving for $f \in H^{-1/2}(\partial \Xi_R^\rho \cap \pD)$ the equation  
 \begin{equation} 
 \label{beltrami}
  \Delta_{\pD} u +\frac{\ld }{\mu}u = f \quad \mbox{ on   } \quad \partial \Xi_R^\rho \cap \pD.
\end{equation}
Following Section 2.5.6 in \cite{Ned01}, by using a local map $\varphi$ and a local parametrization $\xi_i$ of $\partial D$, $i=1,..,d-1$,
the Laplace--Beltrami operator has the local expression
\[\Delta_{\pD} u=\frac{1}{\sqrt{{\rm det} g}}\left(\sum_{i,j=1}^{d-1}\frac{\partial}{\partial \xi_i}\sqrt{{\rm det}g} g^{ij}\frac{\partial u}{\partial \xi_j}\right),\] 
where the vectors $e_i=\partial \varphi/\partial \xi_i$ form a basis in the tangent plane, the matrix $g_{ij}=(e_i.e_j)$ forms the metric tensor $g$, its inverse matrix $g^{-1}$ being denoted $g^{ij}$.
The local regularity of the solution to equation (\ref{beltrami}) on $\partial D$ hence amounts to a regularity problem for a standard elliptic problem in the divergence form in $\mathbb{R}^{d-1}$.
Hence applying the local regularity results for elliptic operators of chapter 8 in \cite{GilTru} with $a^{ij}=\sqrt{{\rm det}g} g^{ij}$, we obtain that
if the second member of the equation is locally in $L^2$ and $\partial D$ is $C^2$ (that is the coefficients $a^{ij}$ are $C^1$), then $u$ is locally in $H^2$.
By using a cut-off function and the interpolation theorem (see \cite{LioMag}) for the equation (\ref{beltrami}), if the second member is locally in $H^{-1/2}$ then $u$ is locally in $H^{3/2}$. Gathering all local estimates on $\partial \Xi_R^\rho \cap \pD$ and using (\ref{trace}) we obtain
 \[
 \| u \|_{H^{3/2}(\partial\Xi_R^\rho\cap\pD)} \leq C_{\mK} \|u\|_{V_{R}}.
 \]
Standard regularity results for the Laplace operator with Dirichlet boundary condition lead to
\[
   \|u\|_{H^2(\Xi_R^\rho)} \leq  C_{\mK} \|u\|_{V_{R}}
\]
and using once again regularity for the Laplace--Beltrami operator we obtain since $\partial D$ is $C^3$,
\[
   \|u\|_{H^{5/2}(\partial\Xi_R^\rho\cap\pD)} \leq C_{\mK}  \|u\|_{V_{R}}.
\]
We finally deduce the desired estimate using regularity result for the Laplace operator and Proposition \ref{PropBound}. \\
\end{proof}
Uniqueness of the reconstruction of $\mu$ (with a known $\ld$) is established in the following Proposition then we will derive a uniform stability estimate for $\mu$.
\begin{proposition}
   \label{PropUniqldmu}
Take $(\ld,\mu^1)$ and $(\ld,\mu^2)$ in ${\mK}$ and assume that their corresponding far--fields  $u^{1,\infty}=T(\ld,\mu^1,\partial D)$ and $u^{2,\infty}=T(\ld,\mu^2,\partial D)$  satisfy
\[
u^{1,\infty}(\hat{x})=u^{2,\infty}(\hat{x})\qquad \forall \hat{x} \in S^{d-1}.
\]
If for all $i=1,...,I$, there exists $\wt{x}_i \in \pD_i$ and $\eta_i>0$ such that $\wt{\pD}_i=\pD_i \cap B(\wt{x}_i,\eta_i)$ is 
\begin{itemize}
\item for $d=2$ either a segment or a portion of a circle,
\item for $d=3$ either a portion of a plane, or a portion of a cylinder, or a portion of a sphere,
\end{itemize}
and the sets $\{x+\gamma \nu(x),\,x \in \wt{\pD}_i,\,\gamma>0\}$ are included in $\Omega$, then $\mu_1 = \mu_2$.
\end{proposition}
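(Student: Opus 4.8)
The plan is to reduce the statement to the analysis of a single field carrying common Cauchy data on $\pD$, and then to exploit the special geometry of $\wt{\pD}_i$ together with the radiation condition to rule out $\mu^1_i\neq\mu^2_i$. First I would show that the equality of the far--fields forces $u^1=u^2$. Indeed, by Rellich's lemma the identity $u^{1,\infty}=u^{2,\infty}$ on $S^{d-1}$ implies $u^{1,s}=u^{2,s}$ outside any ball $B_R$ containing $D$; since $u^1-u^2=u^{1,s}-u^{2,s}$ solves $\La u+k^2u=0$ in the connected domain $\Om$ and vanishes on the open subset $\Om\setminus\overline{B_R}$, unique continuation yields $u^1=u^2=:u$ in $\Om$. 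In particular $u$ and $\partial u/\partial\nu$ agree as Cauchy data on $\pD$ for both boundary conditions. Subtracting the two GIBCs and using that $\mu^1,\mu^2$ are constant on each $\pD_i$ gives $(\mu^1_i-\mu^2_i)\,\Delta_{\pD}u=0$ on $\pD_i$. Hence, if $\mu^1_i\neq\mu^2_i$ for some $i$, then $\Delta_{\pD}u=0$ on $\pD_i$, and inserting this back into the GIBC shows that $u$ satisfies the classical impedance condition $\partial u/\partial\nu+\ld_i u=0$ on $\wt{\pD}_i$.

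Next I would solve, on the reference piece $\wt{\pD}_i$, the Cauchy problem for the Helmholtz equation with the data just obtained. Because $\wt{\pD}_i$ is flat or a portion of a sphere, a cylinder or a circle, the condition $\Delta_{\pD}u=0$ constrains the trace of $u$ to an explicit low--order surface harmonic (an affine function of the arc length for a segment, of the angle for a circular arc, a constant or a first--order spherical harmonic for a sphere, and so on), and separation of variables in coordinates adapted to the geometry then produces $u$ in closed form, as the product of such a surface harmonic with an explicit normal profile (trigonometric in the flat case, of Bessel or spherical Bessel type in the curved cases). Since Helmholtz solutions are analytic, this closed form coincides with $u$ in a full neighbourhood of $\wt{\pD}_i$ in $\Om$. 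The hypothesis that the normal rays $\{x+\gamma\nu(x),\,x\in\wt{\pD}_i,\,\gamma>0\}$ remain in $\Om$ is precisely what allows me to propagate this identity, by unique continuation along the rays, to the whole unbounded region that they sweep out.

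Finally I would derive a contradiction with the Sommerfeld radiation condition on this unbounded region. The explicit product field is bounded but non--radiating: its modulus does not decay like $r^{-(d-1)/2}$ along the rays, whereas $u^s=u-u^i$ must decay at that rate, so matching the decay forces the closed form to reduce to $u^i$ at infinity. This is impossible for a genuine surface harmonic times an oscillatory profile unless all its coefficients vanish, the remaining degenerate configurations (where the incident direction is aligned with the normal of $\wt{\pD}_i$) being discarded using the sign condition $\Im m(\ld_i)\geq0$ valid in $\mK$. One then concludes $u\equiv0$ on the ray region, hence $u\equiv0$ in $\Om$ by unique continuation, so that $u^s=-u^i$; but $u^s$ is radiating while $u^i$ is not, a contradiction. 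Therefore $\mu^1_i=\mu^2_i$ for every $i$, that is $\mu^1=\mu^2$. The hard part will be the combination of the second and third steps: producing the separable closed form uniformly for the three curved geometries and then extracting the radiation contradiction, in particular eliminating the degenerate aligned--incidence cases.
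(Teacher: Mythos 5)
Your skeleton is essentially the paper's: Rellich plus unique continuation to identify $u^1=u^2=:u$, subtraction of the two boundary conditions to get $(\mu_i^1-\mu_i^2)\Delta_{\pD}u=0$ on $\pD_i$, an explicit solution in the region swept by the normal rays, and a contradiction between its behaviour at infinity, the radiation condition and the sign $\Im m(\ld)\geq 0$. The genuine gap is in your second step: the claim that $\Delta_{\pD}u=0$ on $\wt{\pD}_i$ forces the trace of $u$ to be a \emph{low-order} surface harmonic (affine, constant, first-order spherical harmonic). That is true only for $d=2$, where the patch is a curve and $\Delta_{\pD}u$ is a second derivative in arc length. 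For $d=3$ the kernel of the Laplace--Beltrami operator on a two-dimensional patch is infinite-dimensional --- on a planar patch it consists of all harmonic functions of $(x_1,x_2)$, on a spherical patch of all functions harmonic in conformal (stereographic) coordinates --- so there is no finite list of traces, no finite set of ``coefficients'' for your third step to annihilate, and the closed form you promise does not exist as described. The paper needs no characterization of the trace whatsoever: for the planar patch it sets $\wt{u}(x_1,x_2,x_3)=u(x_1,x_2,0)\,c(x_3)$ with $c''+k^2c=0$, $c(0)=1$, $c'(0)=-\ld$, observes that this product solves the Helmholtz equation precisely \emph{because} the tangential Laplacian of the (unknown, arbitrary) trace vanishes, matches Cauchy data and applies unique continuation in $\wt{Q}$, and then restricts to the single ray $x_1=x_2=0$, where the trace reduces to the scalar $u(0,0,0)$. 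Comparing $c(x_3)$, an explicit combination of $e^{\pm ikx_3}$, with $e^{ikd_3x_3}$ then yields $1-i\ld/k=0$, i.e.\ $\ld=-ik$, contradicting $\Im m(\ld)\geq0$.

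Two further points. Your ``uniform'' separation of variables also fails for the cylindrical patch: if $\Delta_{\pD}h=0$ on a cylinder of radius $\rho_0$, the product $h(\theta,z)c(\rho)$ does not solve the Helmholtz equation away from $\rho=\rho_0$, because the angular part of the Laplacian carries the factor $\rho^{-2}\neq\rho_0^{-2}$; this is one reason the paper writes out only the planar case in detail and refers to \cite{BouHad} for the remaining geometries. Finally, your endgame is more convoluted than necessary and partly misdirected: for non-normal incidence the asymptotic matching along a fixed ray is impossible outright (the coefficient of $e^{ikd_3x_3}$ is unimodular, hence cannot vanish), so one never concludes $u\equiv0$ and never needs the detour through ``$u^s=-u^i$''; for normal incidence the sign condition disposes of $\hat{d}=+\nu$ (it forces $\ld=-ik$), but, as stated, not of $\hat{d}=-\nu$, which leads to $\ld=+ik$ and is compatible with $\mathcal{H}$ --- a case that, to be fair, the paper's own proof also passes over silently when it asserts $d_3=1$.
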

\begin{proof}
Since the far--fields $u^{1,\infty}$ and $u^{2,\infty}$ coincide, from Rellich's lemma and unique continuation principle, the associated total fields $u^1$ and $u^2$ coincide up to the boundary $\partial D$, which implies by denoting $u:=u^1=u^2$ that 
\[
\frac{\partial u}{\partial \nu} + {\rm div}_{\partial D} (\mu^j \nabla_{\partial D} u) + \lambda u =0  \quad \mbox{on} \; \partial D
\]
with $j=1,2$. Since  $\ld$ and $\mu^j$ are constant on $\wt{\pD}_i$, $i=1,...,I$ we have
\[
\frac{\partial u}{\partial \nu} + \mu^{j}_i \Delta_{\partial D} u + \lambda_i u =0  \quad \mbox{on} \; \wt{\pD}_i
\]
and
\[
(\mu_{i}^1-\mu_{i}^2) \Delta_{\partial D} u=0 \quad \mbox{on} \;\wt{\pD}_i.
\]
In what follows we focus our attention on some particular portion $\wt{\pD}_i$ and then drop the reference to index $i$ for sake of simplicity.
Suppose that $\mu^1 \neq \mu^2$, 
we have 
\[ 
\Delta_{\partial D}u=0,\quad \D{u}{\nu}= -\ld u \quad \mbox{on} \; \wt{\pD}.
\]
We only consider the case $d=3$ and $\wt{\pD}=\pD \cap B(\wt{x},\eta)$ is a portion of plane of outward normal $\nu$ such that
the set $\wt{Q}=\{x+\gamma \nu,\,x \in \wt{\pD},\,\gamma>0\}$ is included in $\Omega$. We omit the proofs in the other cases because they are very similar (they are addressed in \cite{BouHad} in the case of constant $\ld$ and $\mu$).
There exists a system of coordinates $(x_1,x_2,x_2)$ such that $x(0,0,0)=\wt{x}$  and
\[
\wt{Q}=\{x(x_1,x_2,x_3),\,\sqrt{x_1^2+x_2^2}<\eta,\,x_3 > 0\},
\]
\[
 \wt{\pD}=\{x(x_1,x_2,x_3),\,\sqrt{x_1^2+x_2^2}<\eta,\,x_3 =0\}.
\]
We now consider the function $\wt{u}$ defined in $\wt{Q}$ from $u$ by
\begin{equation*}
    \wt{u}(x_1,x_2,x_3)=u(x_1,x_2,0)c(x_3),
\end{equation*} 
where the function $c$ is uniquely defined by
\begin{equation}
   \frac{d^2 c}{dx^2_3}+ k^2c=0,\quad c(0)=1,\,\frac{dc}{dx_3}(0)=-\ld.
\label{defc}
\end{equation}  
Proceeding as in \cite{BouHad}, we obtain that $u$ and $\wt{u}$ solve the same Helmholtz equation in $\wt{Q}$
and satisfy $\wt{u}=u$ and $\partial_\nu \wt{u}=\partial_\nu u$ on $\wt{\pD}$. Hence, unique continuation implies $\wt{u}=u$ in $\wt{Q}$.\\
Since $u^s$ satisfies the radiation condition 
and 
\[
u(x_1,x_2,x_3)=u^s(x_1,x_2,x_3)+e^{ik(d_1x_1+d_2x_2+d_3x_3)},
\]
we obtain
\begin{equation*}
u(x_1,x_2,x_3) \sim e^{ik(d_1x_1+d_2x_2+d_3x_3)}, \quad x_3 \rightarrow +\infty,
\end{equation*} 
and in particular when $x_1=x_2=0$,
\[
 u(0,0,x_3) \sim e^{ik d_3 x_3}, \quad x_3 \rightarrow +\infty,
\]
that is
\begin{equation}
   c(x_3) \sim C e^{ik d_3 x_3}, \quad x_3 \rightarrow +\infty.
\label{asymp2}
\end{equation} 
To see that the asymptotic behaviour (\ref{asymp2}) is impossible we solve explicitly equation (\ref{defc}) and we get
\[
c(x_3)=\frac{1}{2}\left(1+i\frac{\ld}{k}\right)e^{ik x_3}+\frac{1}{2}\left(1-i\frac{\ld}{k}\right)e^{-ik x_3}.
\]
But from (\ref{asymp2}) we have
\[
   d_3 = 1 \qquad \mbox{ and } \qquad 1-i\frac{\ld}{k} = 0
\]
hence $ \Im m (\ld) = -ik <0$ which is forbidden from assumption $\mathcal{H}$.
\end{proof}
\begin{theorem}
\label{ThStabImpGen}
Under the same assumptions as in Proposition \ref{PropUniqldmu}, there exists a constant $C_{\mK}$ such that for all $(\ld,\mu^1)$ and $(\ld,\mu^2)$ in ${\mK}$ 
\[
   \|\mu^1 - \mu^2\|_{L^\infty(\pD)}\leq C_{\mK} \|T(\ld,\mu^1,\pD) - T(\ld,\mu^2,\pD)\|_{L^2(S^{d-1})}.
\]
\end{theorem}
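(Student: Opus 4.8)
The plan is to reduce the $L^\infty$ bound on $\mu^1-\mu^2$ to a near--field estimate on the difference of total fields $w:=u^1-u^2$, and then to transfer the far--field smallness to that near--field quantity through a Carleman--based propagation of smallness. Since the coefficients are piecewise constant, $\|\mu^1-\mu^2\|_{L^\infty(\pD)}=\max_i|\mu^1_i-\mu^2_i|$, so it suffices to bound each $|\mu^1_i-\mu^2_i|$ by $\|T(\ld,\mu^1,\pD)-T(\ld,\mu^2,\pD)\|_{L^2(S^{d-1})}$. Fixing a piece and working on the patch $\wt{\pD}_i$ where $\mu^1,\mu^2$ are constant, I would subtract the two boundary conditions and use $\Div_{\pD}(\mu^1\nabla_{\pD}u^1)-\Div_{\pD}(\mu^2\nabla_{\pD}u^2)=\mu^1_i\Delta_{\pD}w+(\mu^1_i-\mu^2_i)\Delta_{\pD}u^2$ (and the fact that $\ld$ is common to both) to obtain on $\wt{\pD}_i$ the identity $(\mu^1_i-\mu^2_i)\,\Delta_{\pD}u^2=-\D{w}{\nu}-\mu^1_i\Delta_{\pD}w-\ld\,w$. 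This isolates the unknown jump $\mu^1_i-\mu^2_i$ as a multiple of $\Delta_{\pD}u^2$, provided the latter does not degenerate.

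The second step is a uniform non--degeneracy bound $\|\Delta_{\pD}u^2\|_{L^2(\wt{\pD}_i)}\ge c_{\mK}>0$. The proof of Proposition \ref{PropUniqldmu} already contains the key fact: if $\Delta_{\pD}u\equiv0$ on $\wt{\pD}_i$ then $\D{u}{\nu}=-\ld u$ there, and the explicit solution of (\ref{defc}) together with the radiation condition forces $\Im m(\ld)<0$, which is excluded by $\mathcal{H}$. Hence $\Delta_{\pD}u^2\not\equiv0$ on $\wt{\pD}_i$ for every admissible field. To make this quantitative and uniform I would argue by compactness: $\mK$ is a compact subset of a finite--dimensional space, the solution map is continuous, and Lemma \ref{PropReg} gives a uniform $H^{5/2}$ bound for $u^2$ on $\wt{\pD}_i$; were the infimum of $\|\Delta_{\pD}u^2\|_{L^2(\wt{\pD}_i)}$ over $\mK$ equal to zero, a limiting field would satisfy $\Delta_{\pD}u\equiv0$ on $\wt{\pD}_i$, contradicting the previous fact. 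Combined with the identity above this yields $|\mu^1_i-\mu^2_i|\le C_{\mK}\big(\|\D{w}{\nu}\|_{L^2(\wt{\pD}_i)}+\|\Delta_{\pD}w\|_{L^2(\wt{\pD}_i)}+\|w\|_{L^2(\wt{\pD}_i)}\big)$, that is, a control of $\mu^1_i-\mu^2_i$ by the Cauchy and second--order surface data of $w$ in a neighbourhood of $\wt{\pD}_i$.

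It remains to bound this near--field quantity by $\|w^\infty\|_{L^2(S^{d-1})}$, which is the heart of the matter. Because the far--field determines $w$ throughout $\Om$ only through analytic continuation, the map from $w^\infty$ to any trace of $w$ near $\pD$ is severely ill posed, and a naive estimate is at best logarithmic. Here the two ingredients advertised in the introduction enter. First, Rellich's lemma and well--posedness of the exterior problem turn smallness of $w^\infty$ into control of $w$ on an auxiliary surface $\Gamma$ enclosing $D$; a Carleman estimate for the Helmholtz operator then propagates this smallness inward up to the Cauchy data of $w$ on $\pD$. Second, the a priori bound $\|w\|_{H^3(\Xi_R^\rho)}\le C_{\mK}$ of Lemma \ref{PropReg}, valid for both $u^1$ and $u^2$, furnishes the ceiling needed to convert the propagation of smallness into an effective estimate.

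The delicate point is that propagating smallness against an $H^3$ ceiling yields, for the continuous problem, only a Hölder (or logarithmic) modulus, whereas the statement claims \emph{Lipschitz} stability. The upgrade exploits the finite--dimensional, piecewise--constant structure of $\mK$, in the spirit of \cite{Sin07}: after normalising by $t:=\|\mu^1-\mu^2\|_{L^\infty(\pD)}$, the rescaled jumps live on the unit sphere of $\mathbb{C}^I$, which is compact. A contradiction argument then splits into two regimes. If $t$ stays bounded away from $0$, a limit of coinciding far--fields with distinct coefficients contradicts the uniqueness Proposition \ref{PropUniqldmu}. If $t\to0$, the difference quotient of $T$ converges to the value of its differential on a unit vector, and injectivity of this differential—which follows from the same non--vanishing of $\Delta_{\pD}u$ via unique continuation—forces that vector to vanish, a contradiction. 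I expect this far--field--to--near--field step, and specifically the passage from Hölder to the claimed Lipschitz constant through the finite--dimensional compactness, to be the main obstacle; the Carleman estimate is precisely the tool replacing the doubling inequalities of \cite{Sin07} that makes the intermediate propagation of smallness quantitative.
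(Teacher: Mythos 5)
Your final argument is genuinely different from the paper's, and its skeleton can be made to work, but you have mischaracterized how the paper gets Lipschitz continuity, and your own route has one genuine gap. The paper does \emph{not} upgrade a H\"older/logarithmic estimate to a Lipschitz one via compactness. It normalizes at the outset, setting $v:=(u^2-u^1)/\|\mu^2-\mu^1\|_{L^\infty(\pD)}$, so that the far field of $v$ has norm exactly $\beta:=\|T(\ld,\mu^1,\pD)-T(\ld,\mu^2,\pD)\|_{L^2(S^{d-1})}/\|\mu^1-\mu^2\|_{L^\infty(\pD)}$ and the uniform bounds of Proposition \ref{PropBound} and Lemma \ref{PropReg} apply to $v$ itself. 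The quantitative unique continuation results (Propositions \ref{P1} and \ref{P2}) together with the far--field--to--near--field estimate of \cite[Lemma 6.1.2]{Isa98} (note: this step is quantitative; your appeal to ``Rellich's lemma and well-posedness of the exterior problem'' provides no modulus whatsoever) bound the boundary residual in (\ref{rel0}) by an increasing function $f(\beta)$ with $f(0^+)=0$, while your step 2 --- identical to the paper's step (ii) --- gives the lower bound $c_{\mK}\le f(\beta)$ after maximizing over $i$. Hence $\beta$ is bounded below by a positive constant independent of the coefficients, which \emph{is} the Lipschitz estimate: the logarithmic modulus never needs upgrading because it is applied to the normalized quantity. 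Your route instead discards all of this and invokes the abstract principle that an injective map with injective differential on a compact convex finite-dimensional set is Lipschitz-stable: the regime $t$ bounded away from $0$ is handled by injectivity of $T$ (Proposition \ref{PropUniqldmu}), the regime $t\to0$ by injectivity of $dT(\ld,\mu)$ on piecewise-constant directions, which indeed follows from Rellich's lemma and the non-vanishing of $\Delta_{\pD}u$ on $\wt{\pD}_i$. This is shorter and avoids Carleman estimates entirely, at the price of requiring the differentiability of $T$ (Lemma \ref{LeDiff}, which the paper only establishes later, for the numerical section) and of being even less constructive; it also makes your steps 1 and 3 dead weight, since only the injectivity facts survive into the final argument.

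The gap is in the regime $t_n\to0$: there your base points $(\ld^n,\mu^{2,n})$ and directions $e_n$ vary with $n$, so the assertion that ``the difference quotient of $T$ converges to the value of its differential on a unit vector'' does not follow from pointwise Fr\'echet differentiability alone. A map can be differentiable at every point while difference quotients taken at moving base points fail to converge to the derivative at the limit point (think of $x\mapsto x^2\sin(1/x^2)$). What you need is continuity of $(\ld,\mu)\mapsto dT(\ld,\mu)$ --- equivalently an integral mean-value representation along the segments $[\mu^{2,n},\mu^{1,n}]$, which do lie in $\mK$ by convexity of the defining constraints, followed by dominated convergence. This continuity is nowhere proven in the paper and is not addressed in your proposal. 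It is true here, because the sesquilinear form $a$ is affine in $(\ld,\mu)$, so by the uniform invertibility of Proposition \ref{PropBound} and a Neumann series argument the solution operator, and hence $T$, is actually analytic on the open set where assumption $\mathcal{H}$ holds; but this supplementary lemma must be stated and proven, and without it the second regime of your compactness argument is incomplete.
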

In order to prove Theorem \ref{ThStabImpGen} we will need the following results on quantification of unique continuation, the proof of which can be found in \cite[Proposition $2.4$]{Bou}  for the first one and in \cite[Lemma $3.1$]{Phu03}  for the second one. 
\begin{proposition}
For all $x_i \in \pD_i \cap \partial \Xi_R^\rho $, there exist $r_i>0$ and an open domain $\omega_i \Subset \Xi_R^\rho$ such that for all $\kappa \in (0,1)$, there exist $\,c,\ve_0>0$ such that for all $\ve \in (0,\ve_0)$, for all $u \in H^2(\Xi_R^\rho)$ which satisfies $\Delta u +k^2u=0$ in $\Xi_R^\rho$, we have
\[
||u||_{H^1(\Xi_R^\rho \cap B(x_i,r_i))} \leq e^{c/\ve}||u||_{H^1(\omega_i)} + \ve^\kappa\, ||u||_{H^2(\Xi_R^\rho)}.
\]
\label{P1}
\end{proposition}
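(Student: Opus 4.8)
The plan is to read Proposition \ref{P1} as a quantitative (interpolation-type) unique continuation estimate for the Helmholtz operator $\La+k^2$ and to prove it by a local Carleman estimate. The key preliminary remark is that the zeroth-order term $k^2u$ does not enter the principal symbol, so any Carleman weight adapted to the Laplacian is admissible here: the contribution of $k^2u$ will be absorbed into the left-hand side once the Carleman parameter $\tau$ is taken large enough.

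Concretely, I would fix a smooth weight $\psi$ satisfying H\"ormander's sub-ellipticity condition and use the standard estimate: there are $\tau_0,C>0$ with
\[
\tau^3\int e^{2\tau\psi}|v|^2 + \tau\int e^{2\tau\psi}|\nabla v|^2 \leq C\int e^{2\tau\psi}|\La v|^2
\]
for all $\tau\geq\tau_0$ and all $v$ supported in a fixed small set. Applying this to $v=\chi u$, where $\chi$ equals one on the target $\Xi_R^\rho\cap B(x_i,r_i)$ and is supported in $\Xi_R^\rho$, and using $\La u=-k^2u$, the identity $\La(\chi u)=-k^2\chi u+2\nabla\chi\cdot\nabla u+u\,\La\chi$ shows that the first term is absorbed on the left while the remaining commutator terms, of order $H^1$, are supported on $\mathrm{supp}\,\nabla\chi$. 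I would arrange the cut-off so that this support splits into one piece contained in $\om_i$ and one piece lying in the uncontrolled collar near $\partial\Xi_R^\rho$, where only the global norm $\|u\|_{H^2(\Xi_R^\rho)}$ is at our disposal.

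Choosing the level sets of $\psi$ so that $\psi\geq\alpha$ on the target, $\psi\leq\gamma$ on the $\om_i$-piece with $\gamma>\alpha$, and $\psi\leq\beta$ on the collar with $\beta<\alpha$, bounding the exponential weights below on the left and above on the right then yields
\[
e^{2\tau\alpha}\,\|u\|_{H^1(\Xi_R^\rho\cap B(x_i,r_i))}^2 \leq C e^{2\tau\gamma}\|u\|_{H^1(\om_i)}^2 + C e^{2\tau\beta}\|u\|_{H^2(\Xi_R^\rho)}^2.
\]
Dividing by $e^{2\tau\alpha}$ and writing $\ve=1/\tau$ turns the first term into $e^{c/\ve}\|u\|_{H^1(\om_i)}^2$ and the second into $e^{-2(\alpha-\beta)/\ve}\|u\|_{H^2(\Xi_R^\rho)}^2$; since $e^{-c'/\ve}\leq C_\kappa\,\ve^\kappa$ for every $\kappa\in(0,1)$ and small $\ve$, the collar term is dominated by $\ve^\kappa\|u\|_{H^2(\Xi_R^\rho)}^2$. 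Taking square roots and relabelling constants gives precisely the inequality of Proposition \ref{P1}, and this exponential-to-polynomial conversion is exactly what accounts for the freedom in $\kappa$.

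The step I expect to be delicate is the geometric construction underlying the propagation of smallness: the weight $\psi$ must satisfy the pseudoconvexity condition for $\La+k^2$ while arranging the level sets so that $\om_i$ sits at a high level, the target at an intermediate one and the uncontrolled collar at a low one, and one must be able to carry the estimate right up to the point $x_i\in\partial\Xi_R^\rho$ from inside $\Xi_R^\rho$, which forces the intermediate balls to approach the boundary while keeping their constants under control. A cleaner alternative that avoids exhibiting the weight explicitly is to first prove a three-balls inequality $\|u\|_{B_2}\leq C\|u\|_{B_1}^\theta\|u\|_{B_3}^{1-\theta}$ for $\La+k^2$, propagate smallness along a finite chain of overlapping balls joining $\om_i$ to a neighbourhood of $x_i$ inside $\Xi_R^\rho$ to reach a global H\"older estimate, and then convert it into the stated $\ve$-form by Young's inequality with a free parameter; this route only uses that $\om_i$ and the target are connected within $\Xi_R^\rho$, which is ensured by $\om_i\Subset\Xi_R^\rho$.
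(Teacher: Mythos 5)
First, a point of comparison: the paper itself does not prove this proposition — it cites \cite[Proposition 2.4]{Bou}, so your attempt should be measured against the mechanism of that reference, which (like your sketch) is based on Carleman estimates. Your general toolbox is therefore the right one, but your sketch fails at the one point that makes this statement nontrivial: the target region $\Xi_R^\rho \cap B(x_i,r_i)$ touches the boundary $\partial \Xi_R^\rho$, since $x_i$ is a \emph{boundary} point (it lies on $\pD_i \cap \partial\Xi_R^\rho$). This makes your cut-off/weight arrangement geometrically impossible. A function $\chi$ that equals $1$ on the target cannot be compactly supported in the open set $\Xi_R^\rho$, because the closure of the target meets $\partial\Xi_R^\rho$; for the same reason you cannot have $\psi \geq \alpha$ on the target while $\psi \leq \beta < \alpha$ on a collar neighbourhood of $\partial\Xi_R^\rho$ — these two sets intersect near $x_i$. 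If instead you allow $\chi u$ to be nonzero up to $\pD$, the Carleman inequality (which relies on integration by parts) acquires boundary integrals weighted by $e^{2\tau\psi}$ with $\psi \geq \alpha$ on the part of $\pD$ near $x_i$; since no boundary condition is assumed on $u$ in this proposition, these terms cannot be discarded, they survive division by $e^{2\tau\alpha}$, and you end up with $C\|u\|_{H^2}$ instead of a vanishing term. Your three-balls alternative has exactly the same defect: a \emph{finite} chain of interior balls cannot cover the target up to $\pD$, and the constants blow up as the chain approaches the boundary — you flag this as "delicate" but do not resolve it, and it is the whole content of the result.

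The actual mechanism, which also explains two structural features of the statement that your argument cannot reproduce, is a two-region optimization: split off a collar $\{x \in \Xi_R^\rho : d(x,\partial\Xi_R^\rho) < \delta\}$ and bound $\|u\|_{H^1(\mathrm{collar})} \leq C\delta^{1/2}\|u\|_{H^2(\Xi_R^\rho)}$ (a Fubini/trace-type estimate requiring no equation, only $H^2$ regularity); on the complementary region $\{d > \delta\}$ apply interior quantitative unique continuation, whose cost is exponential in $1/\delta$; then optimize $\delta$ against $\ve$. This is precisely where the $H^2$ norm (rather than $H^1$) on the right-hand side is needed, and why the exponent is restricted to $\kappa \in (0,1)$: the polynomial factor $\ve^\kappa$ is an interpolation loss at the boundary, not a rewriting of an exponentially small quantity. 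Indeed, your argument, if it worked, would yield $e^{-c'/\ve}\|u\|_{H^2}$, i.e.\ the proposition for \emph{every} $\kappa>0$ — that "too strong" conclusion is itself a signal that the boundary contribution has been lost along the way.
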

\begin{proposition} 
Let $\omega_0,\omega_1$ be two open domains such that $\omega_0,\omega_1 \Subset \Xi_R^\rho$. There exist
$s,c,\ve_0>0$ such that for all $\ve \in (0,\ve_0)$, for all $u \in H^1(\Xi_R^\rho)$ which satisfies $\Delta u +k^2u=0$ in $\Xi_R^\rho$,
\[||u||_{H^1(\omega_1)} \leq \frac{c}{\ve}||u||_{H^1(\omega_0)}+ \ve^s\, ||u||_{H^1(\Xi_R^\rho)}.\] 
\label{P2}
\end{proposition}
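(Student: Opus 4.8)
The plan is to deduce the stated additive estimate from a multiplicative (three-ball type) interpolation inequality, via a propagation-of-smallness argument followed by Young's inequality.

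First I would establish the local three-ball inequality for solutions of $\Delta u + k^2 u = 0$: for concentric balls $B_r \subset B_{2r} \subset B_{4r}$ with $B_{4r} \Subset \Xi_R^\rho$, there should exist $\alpha \in (0,1)$ and $C>0$, depending on $r$ and $k$ but not on $u$, such that
\[
\|u\|_{L^2(B_{2r})} \leq C\, \|u\|_{L^2(B_r)}^{\alpha}\, \|u\|_{L^2(B_{4r})}^{1-\alpha}.
\]
This is the quantitative form of unique continuation for $\Delta + k^2$; it follows from a Carleman estimate with a radial weight centered at the common center, the zeroth-order term $k^2 u$ being absorbed as a lower-order perturbation since it is dominated by the large Carleman parameter. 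By interior elliptic regularity the solutions are automatically smooth inside, so the restriction to $H^1$ data in the statement is harmless. To replace the $L^2$ norms by the $H^1$ norms appearing in the final estimate, I would invoke interior (Caccioppoli) estimates, which bound $\|\nabla u\|_{L^2(B_{2r})}$ by $\|u\|_{L^2(B_{3r})}$ and thus let me upgrade all three norms to $H^1$ norms on slightly enlarged balls.

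Next I would run the propagation of smallness. Fix a connected compact set $\Xi'$ with $\omega_0 \cup \omega_1 \subset \Xi' \Subset \Xi_R^\rho$, which is possible since $\omega_0,\omega_1 \Subset \Xi_R^\rho$. Cover a path joining $\omega_0$ to $\omega_1$ inside $\Xi'$ by a finite chain of balls $B^{(0)},\dots,B^{(N)}$ of a fixed radius, chosen so that consecutive balls overlap on a set of positive measure and every quadruple-radius ball stays inside $\Xi_R^\rho$. Applying the three-ball inequality along the chain, the smallness of $\|u\|_{H^1(\omega_0)}$ propagates step by step to $\|u\|_{H^1(\omega_1)}$, the large factor at each stage being controlled by $\|u\|_{H^1(\Xi_R^\rho)}$. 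Iterating $N$ times yields a global interpolation inequality
\[
\|u\|_{H^1(\omega_1)} \leq C\, \|u\|_{H^1(\omega_0)}^{\beta}\, \|u\|_{H^1(\Xi_R^\rho)}^{1-\beta},
\]
with a (generally much smaller) exponent $\beta = \alpha^{N} \in (0,1)$ and a constant $C$ depending on the chain.

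Finally I would convert this into the additive form demanded by the statement. For $a,b\geq 0$ and a free parameter $t>0$, the weighted arithmetic–geometric mean inequality gives $a^{\beta} b^{1-\beta} \leq \beta t\,a + (1-\beta)\,t^{-\beta/(1-\beta)}\,b$. Choosing $t$ of order $\varepsilon^{-1}$, setting $s := \beta/(1-\beta)$, and renaming constants (the free parameter can be tuned so that the coefficient of the second term is exactly $\varepsilon^{s}$) transforms the global interpolation inequality into
\[
\|u\|_{H^1(\omega_1)} \leq \frac{c}{\varepsilon}\,\|u\|_{H^1(\omega_0)} + \varepsilon^{s}\,\|u\|_{H^1(\Xi_R^\rho)},
\]
valid for all $\varepsilon \in (0,\varepsilon_0)$, which is exactly the claimed estimate. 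The main obstacle is the first step: proving the three-ball inequality with constants uniform over all admissible $u$, i.e.\ establishing (or citing) the Carleman estimate for $\Delta + k^2$ and correctly absorbing the zeroth-order term and then passing from $L^2$ to $H^1$ norms. A secondary, purely geometric difficulty is to guarantee that the chain of balls can be drawn inside $\Xi_R^\rho$ despite its possible non-convexity and the presence of the inner boundary $\partial D$ and the cut at distance $\rho$ from $\delta D$; this implicitly requires $\omega_0$ and $\omega_1$ to lie in the same connected component of $\Xi_R^\rho$.
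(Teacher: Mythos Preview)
The paper does not give its own proof of this proposition: it simply records that the result is \cite[Lemma~3.1]{Phu03} and uses it as a black box. Your proposal --- three-ball inequality via a Carleman estimate for $\Delta+k^2$, propagation of smallness along a chain of overlapping balls, then conversion of the multiplicative interpolation inequality into the additive form by Young's inequality --- is exactly the classical route behind such results, so there is nothing to compare against in the paper itself.

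Two minor remarks. First, your concern about connectivity is well placed but not an obstruction here: $\Omega_R$ is connected (it is an annular-type region between $\partial D$ and $\partial B_R$), and removing a $\rho$-neighbourhood of the $(d-2)$-dimensional set $\delta D$ for sufficiently small $\rho$ does not disconnect it, so $\omega_0$ and $\omega_1$ can always be joined by a chain of balls inside $\Xi_R^\rho$. Second, the passage from $L^2$ to $H^1$ norms on the left can be done by Caccioppoli as you indicate, while on the right the trivial inequality $\|u\|_{L^2}\leq\|u\|_{H^1}$ suffices; no genuine difficulty arises there.
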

\begin{proof}[Proof of Theorem \ref{ThStabImpGen}]
Let $u^1$ and $u^2$ be the solutions to problems $\mathcal{P}(\ld,\mu^1,\pD)$ and $\mathcal{P}(\ld,\mu^2,\pD)$ respectively with $u^i(x)=e^{ik\hat{d}\cdot x}$ as incident plane wave. Following \cite{Sin07}, we introduce an auxiliary function 
\[
v := \frac{u^2- u^1}{\|\mu^2-\mu^1\|_{L^\infty(\pD)}}
\]
which is solution of the scattering problem
\begin{equation*}
\fl
\cases{
\Delta v + k^2v = 0  \quad \mbox{inside} \; \Om _R,\\
\Div _{\pD} (\mu^1 \nabla_{\pD} v) +\frac{\partial v}{\partial \nu} + \ld v = \frac{1}{\|\mu^2-\mu^1\|_{L^\infty(\pD)}}\Div_{\pD}[(\mu^1 - \mu^2)\nabla_{\pD} u^2]  \quad \mbox{on} \; \pD,\\
 \D{v}{r} - S_R(v) = 0 \; \mbox{ on } \partial B_R.
}
\end{equation*} 
Using  Proposition \ref{PropBound}, there exists a constant $C_{\mK}$ independent of $\ld,\, \mu_1$ and $\mu_2$ such that
\begin{eqnarray*}
\|v\|_{V_R} & \leq \frac{C_{\mK}}{\|\mu^2-\mu^1\|_{L^\infty(\pD)}}\left\| \Div_{\pD}\left[(\mu^1 - \mu^2)\nabla_{\pD} u^2\right] \right\|_{H^{-1}(\pD)} \\
& \leq C_{\mK}\sup_{w\in H^1(\pD)}   \int_{\pD}  \frac{\nabla_{\pD} u^2\cdot \nabla_{\pD}\overline{w }}{\|w\|_{H^1(\pD)}}ds \leq C_{\mK}\|u^2\|_{V_R}
\end{eqnarray*}
and then using Proposition \ref{PropBound} once more for $u^2$ we obtain
\begin{equation*}
\|v\|_{V_R}  \leq C_{\mK}.
\end{equation*}
Using a similar argument as in the proof of Lemma \ref{PropReg} one gets that
\begin{equation}
\label{EqBoundH3}
 \|v\|_{H^3(\Xi^\rho_R)}  \leq C_{\mK}
\end{equation}
for another constant $C_{\mK}$.\\
Let us focus now on the boundary condition for $v$ on $\pD_i$ for a fixed $i=1,..,I$,
\[
\mu_i^1 \Delta_{\pD}v + \D{v}{\nu} + \ld v= \frac{\mu_i^1 - \mu_i^2}{\|\mu^2-\mu^1\|_{L^\infty(\pD)}} \Delta_{\pD} u^2  \,\,\, {\mbox on}\,\,\,\pD_i.
\]  
Using the notation of  Proposition \ref{PropUniqldmu}, for a part $\gamma_i$ of $\wt{\pD_i}$ which is strictly included into $\partial \Xi_R^\rho \cap B(\wt{x_i},r_i)$, where $r_i$ comes from Proposition \ref{P1}, we have
\begin{equation}
\| \mu_i^1 \Delta_{\pD}v + \frac{\partial v}{\partial \nu} + \ld v\|_{L^2(\gamma_i)}=\frac{|\mu_i^2-\mu_i^1|}{\|\mu^2-\mu^1\|_{L^\infty(\pD)}}\|\Delta_{\pD }u^2\|_{L^2(\gamma_i)}.
\label{rel0}
\end{equation}
The strategy now consists in bounding the left-hand side from above and the right-hand side from below. \\ \\
\noindent \textit{(i) Upper bound for the left--hand side of (\ref{rel0})}. Let $\phi_i \in C_0^\infty(B(\wt{x_i},r_i))$ with $\phi_i =0$ on $\overline{\Om_R} \setminus \Xi_R^\rho$ and $\phi_i=1$ on $\gamma_i$ then
\begin{eqnarray*}
\fl
||\mu_i^1 \Delta_{\pD}v + \frac{\partial v}{\partial \nu}  &+ \ld v||_{L^2(\gamma_i)} \leq  C_{\mK} \left[\|\Delta_{\pD} v\|_{L^2(\gamma_i)}+ ||\frac{\partial v}{\partial \nu}||_{L^2(\gamma_i)} + ||v||_{L^2(\gamma_i)} \right]\\
&\leq C_{\mK} \left[\|\Delta_{\pD} (\phi_i v)\|_{L^2(\pD)}+||\frac{\partial (\phi_i v)}{\partial \nu}||_{L^2(\pD)} + ||\phi_i v||_{L^2(\pD)}\right].
\end{eqnarray*}
 By interpolation we get
\begin{eqnarray*}
\fl
 \|\Delta_{\pD} &(\phi_i v)\|_{L^2(\pD)}+||\frac{\partial (\phi_i v)}{\partial \nu}||_{L^2(\pD)} + ||\phi_i v||_{L^2(\pD)}
\\ \fl&\leq C \left(\|\phi_i v\|_{H^2(\pD)}+ \|\frac{\partial(\phi_i v)}{\partial \nu}\|_{L^2(\pD)}\right)\\
\fl&\leq C \left(\|\phi_iv\|_{H^{1/2}(\pD)}^{1/8} \|\phi_iv\|_{H^{5/2}(\pD)}^{7/8} +\|\frac{\partial (\phi_i v)}{\partial \nu}\|_{H^{-1/2}(\pD)}^{1/2}\|\frac{\partial (\phi_i v)}{\partial \nu}\|_{H^{1/2}(\pD)}^{1/2} \right)\\
 \fl&\leq C_{\mK} \|\phi_iv\|_{H^1(\Om_R)}^{1/8}
\end{eqnarray*}
where the last inequality comes from the trace theorems, the fact that $\Delta u = -k^2u$ in $\Om_R$ and the bound (\ref{EqBoundH3}). Consequently
\begin{equation}
\label{EqComp}
||\mu_i^1 \Delta_{\pD}v + \frac{\partial v}{\partial \nu} + \ld v||_{L^2(\gamma_i)} \leq C_{\mK} \|v\|_{ H^1(\Xi^\rho_R\cap B(\wt{x_i},r_i))}^{1/8}.
\end{equation}
By applying Proposition \ref{P1} to $v$, there exists an open set $\om_i \subset \Xi_R^\rho$ such that for some fixed $\kappa \in (0,1)$, there exists $c>0$ such that for small $\ve$,
\[
||v||_{H^1(\Xi^\rho_R \cap B(\wt{x_i},r_i))} \leq e^{c/\ve}||v||_{H^1(\omega_i)} + \ve^\kappa\, ||v||_{H^2(\Xi^\rho_R)}.
\]
Take $R$ sufficiently large such that there exists $\wt{R}$ such that $3\wt{R}+1 < R$ and $ D \subset B_{\wt{R}}$. Then, applying Proposition \ref{P2} to $v$ with $\omega_0  = B(x_0,1/4)$ for $x_0 \in \partial B_{3\wt{R}+1/2} $  and $\om_1 = \om_i$, there exist constants $c,s>0$ such that for small $\ve$,
\[
||v||_{H^1(\omega_i)} \leq \frac{c}{\ve}||v||_{H^1(\omega_0)}+ \ve^s\, ||v||_{H^1(\Xi_R^\rho)}.
\] 
Hence for some fixed $\kappa \in (0,1)$, there exists a constant $c>0$ such that for small $\ve$,
\[
||v||_{H^1(\Xi^\rho_R \cap B(\wt{x_i},r_i))} \leq e^{c/\ve} ||v||_{H^1(\omega_0)}+ \ve^\kappa\, ||v||_{H^2(\Xi_R^\rho)}.
\]
Estimate (\ref{EqBoundH3}) yields to the existence of  constants $C_{\mK},\, c$ such that for small $\ve$,
\begin{equation}
\label{EqProche}
||v||_{H^1(\Xi^\rho_R \cap B(\wt{x_i},r_i))} \leq e^{c/\ve} ||v||_{H^1(\omega_0)}+ C_{\mK}\ve^\kappa.
\end{equation}
From the definition of $x_0$ we get $ B(x_0,3/8) \subset B_{3\wt{R}+1} \setminus \overline{B_{3\wt{R}}}$ and we obtain the following inequalities
\begin{equation}
\label{EqCac}
||v||_{H^1(B(x_0,1/4))} \leq C \| v \|_{L^2(B(x_0,3/8))}\leq C  \| v \|_{L^2( B_{3\wt{R}+1} \setminus \overline{B_{3\wt{R}}})}.
\end{equation}
where the first inequality is obtained by taking $v=\chi^2 \overline{u}$ in the variational formulation (\ref{EqVarForm}) where $\chi \in C^\infty_0(B(x_0,3/8))$ is a positive function and $\chi = 1$ in $B(x_0,1/4)$. In addition, from the continuous embedding of $H^2(\Xi_R^\rho)$ into the space of continuous functions for $d\leq3$ and the uniform bound (\ref{EqBoundH3}) we have the following $L^\infty$ estimate 
\[
\|v\|_{L^\infty(\Xi_R^\rho)} \leq C_{\mK}.
\]
\cite[Lemma $6.1.2$]{Isa98} applies to the radiating solution $v$ of the Helmholtz equation (see also \cite{Bus96}) and there exists $0<\beta_0<1$ such that if 
\[
\beta:=\|v^\infty\|_{L^2(S^{d-1})} \leq \beta_0
\]
then 
\[
 \| v \|_{L^2( B_{3\wt{R}+1} \setminus \overline{B_{3\wt{R}}})} \leq C_{\mK} \beta^{\eta(\beta)}
\]
with
\[
\eta(\beta) := \frac{1}{1 + \mbox{ln}(-\mbox{ln}(\beta)+e)}.
\]
 Plugging the last inequality into (\ref{EqCac}) and then using (\ref{EqProche}) one gets for small $\ve$,
\begin{equation}
\label{EqLointain}
||v||_{H^1(\Xi^\rho_R \cap B(\wt{x_i},r_i))} \leq C_{\mK}( e^{c/\ve} \beta^{\eta(\beta)} + \ve^\kappa)
\end{equation}
and the constants $c$, $C_{\mK}$, $\ve_0$ do not depend on $\ld$ and $\mu$. Now by using the optimization technique of \cite[Corollary $2.1$]{Bou}, which is applicable since $||v||_{H^1(\Xi_R^\rho \cap B(\wt{x_i},r_i))} \leq C_{\mK}$,
it follows from (\ref{EqLointain}) that there exist $C_{\mK},\delta_0>0$ such that for $ \beta^{\eta(\beta)}  \leq\delta_0$,
\begin{equation}
\label{EqLoinProche}
||v||_{H^1(\Xi_R^\rho \cap B(\wt{x_i},r_i))} \leq \frac{C_{\mK}}{\left(\mbox{ln}(C_{\mK}/(\beta^{\eta(\beta)})\right)^\kappa}.
\end{equation}
But $\beta \mapsto \beta^{\eta(\beta)} $ is an increasing function on $(0,1)$ hence there exists $\beta_1\leq \beta_0$ such that inequality (\ref{EqLoinProche}) is satisfied for all $\beta \leq \beta_1$.
By using (\ref{EqComp}) there exist $\kappa,\, C_{\mK},\, \beta_1>0$ such that for $\beta \leq \beta_1$
\begin{equation}
\label{EqLeft}
||\mu_i^1 \Delta_{\pD}v + \frac{\partial v}{\partial \nu} + \ld v||_{L^2(\gamma_i)} \leq\frac{C_{\mK}}{\left(\mbox{ln}(C_{\mK}/(\beta^{\eta(\beta)})\right)^{\kappa/8}}:=f(\beta).
\end{equation}
\textit{(ii) Lower bound for the right--hand side of  (\ref{rel0})}. Let us denote $c^i_{\mK}=\inf_{(\ld,\mu) \in {\mK}} ||\Delta_{\pD} u||_{L^2(\gamma_i)}$ where $u$ is solution to $\mathcal{P}(\ld,\mu,\pD)$. Assume that $c^i_{\mK}=0$. The mapping $(\ld,\mu) \in {\mK} \mapsto ||\Delta_{\pD} u||_{L^2(\gamma_i)}$ is continuous from Proposition \ref{PropBound} and ${\mK}$ is a compact subset of $L^\infty(\pD)^2$. Then $c^i_{\mK}$ is reached for some $(\ld^0,\mu^0) \in {\mK}$.
The corresponding solution $u^0$ satisfies $\Delta_{\pD} u^0=0$ on $\gamma_i$ which is impossible using the same argument as in the uniqueness proof (Proposition \ref{PropUniqldmu}). Therefore $c^i_{\mK}>0$ and from (\ref{rel0}) and (\ref{EqLeft}) we obtain for $c_{\mK} := \min_{i=1,\cdots,I} c^i_{\mK}>0$
\[
\frac{|\mu_i^1-\mu_i^2|}{\|\mu^1-\mu^2\|_{L^\infty(\pD)}} c_{\mK} \leq f(\beta).
\]
for $\beta \leq \beta_1$.
By taking the max over $i$ one obtains
\[
c_{\mK} \leq f(\beta)
\]
and since $f$ is an increasing function, there exists $\beta_2>0$ which is independent of $\ld$ and $\mu$ such that if $\beta\leq\beta_1$ then $\beta\geq \beta_2$. Finally, we have
\[
\min(\beta_1,\beta_2)\leq \beta = \frac{||u^{1,\infty}-u^{2,\infty}||_{L^2(S^{d-1})}}{\|\mu^1-\mu^2\|_{L^\infty(\pD)}},
\]
which is the desired result since $\beta_1$ and $\beta_2$ are independent of $\ld$ and $\mu$.
\end{proof}
Conversely, if one assumes that $\mu$ is known, a slight adaptation of this last proof gives the following stability estimate for $\ld$.
\begin{theorem}
\label{ThStabImpClas}
 There exists a constant $C_{\mK}$ such that for all $(\ld^1,\mu)$ and $(\ld^2,\mu)$ in ${\mK}$ 
\[
   \|\ld^1 - \ld^2\|_{L^\infty(\pD)}\leq C_{\mK} \|T(\ld^1,\mu,\pD) - T(\ld^2,\mu,\pD)\|_{L^2(S^{d-1})}.
\]
\end{theorem}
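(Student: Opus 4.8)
The plan is to mirror the proof of Theorem \ref{ThStabImpGen} almost verbatim, the only genuinely new ingredient being the lower bound step. First I would introduce the auxiliary function
\[
v:=\frac{u^2-u^1}{\|\ld^2-\ld^1\|_{L^\infty(\pD)}},
\]
where $u^1$ and $u^2$ solve $\mathcal{P}(\ld^1,\mu,\pD)$ and $\mathcal{P}(\ld^2,\mu,\pD)$ with the same incident plane wave $u^i$. Subtracting the two boundary conditions and writing $\ld^2u^2-\ld^1u^1=\ld^1(u^2-u^1)+(\ld^2-\ld^1)u^2$, one checks that $v$ solves the scattering problem with the operator associated to $(\mu,\ld^1)$ and with boundary source
\[
\Div_{\pD}(\mu\nabla_{\pD}v)+\D{v}{\nu}+\ld^1 v=-\frac{\ld^2-\ld^1}{\|\ld^2-\ld^1\|_{L^\infty(\pD)}}\,u^2\quad\mbox{on }\pD,
\]
together with a homogeneous radiation condition on $\partial B_R$. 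In contrast with the proof of Theorem \ref{ThStabImpGen}, the right-hand side now involves $u^2$ itself rather than a term of the form $\Div_{\pD}(\cdot\,\nabla_{\pD}u^2)$; it therefore lies in $L^\infty(\pD)\subset L^2(\pD)$, so the source is in fact more regular and the subsequent estimates are, if anything, easier.

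Second I would reproduce the two uniform bounds. Proposition \ref{PropBound} applied to the problem solved by $v$ gives $\|v\|_{V_R}\leq C_{\mK}\|u^2\|_{L^2(\pD)}\leq C_{\mK}\|u^2\|_{V_R}\leq C_{\mK}$, the last estimate coming from a further application of Proposition \ref{PropBound} to $u^2$; the argument of Lemma \ref{PropReg} then upgrades this to $\|v\|_{H^3(\Xi_R^\rho)}\leq C_{\mK}$ as in (\ref{EqBoundH3}), the regularity bootstrap going through unchanged because the source is now even smoother on $\pD$. Restricting the boundary condition to a fixed piece $\pD_i$, on which $\mu$ and $\ld^1$ are constant, yields the analogue of (\ref{rel0}),
\[
\Bigl\|\mu_i\Delta_{\pD}v+\D{v}{\nu}+\ld^1_i v\Bigr\|_{L^2(\gamma_i)}=\frac{|\ld_i^2-\ld_i^1|}{\|\ld^2-\ld^1\|_{L^\infty(\pD)}}\,\|u^2\|_{L^2(\gamma_i)}.
\]
The upper bound for the left-hand side is literally the one already established, since its algebraic structure $\mu_i\Delta_{\pD}v+\partial_\nu v+\ld^1_i v$ is identical: the interpolation and trace argument leading to (\ref{EqComp}), followed by the quantified unique continuation of Propositions \ref{P1} and \ref{P2}, the far-field bound of \cite[Lemma $6.1.2$]{Isa98}, and the optimization of \cite[Corollary $2.1$]{Bou}, carry over to produce (\ref{EqLeft}) with the same increasing function $f(\beta)$, now with $\beta=\|u^{1,\infty}-u^{2,\infty}\|_{L^2(S^{d-1})}/\|\ld^1-\ld^2\|_{L^\infty(\pD)}$.

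The only step needing a different argument, and which I expect to be the actual point of the proof, is the lower bound for the right-hand side, namely that $c^i_{\mK}:=\inf_{(\ld,\mu)\in\mK}\|u\|_{L^2(\gamma_i)}$ is strictly positive. By continuity (Proposition \ref{PropBound}) and compactness of $\mK$, if $c^i_{\mK}=0$ the infimum would be attained at some $(\ld^0,\mu^0)\in\mK$ whose solution $u^0$ vanishes on $\gamma_i$. Here, in place of the shape-dependent argument of Proposition \ref{PropUniqldmu}, I would use the simpler observation that $u^0\equiv0$ on the open surface patch $\gamma_i$ forces $\Delta_{\pD}u^0=0$ there, so the boundary condition $\mu^0\Delta_{\pD}u^0+\partial_\nu u^0+\ld^0 u^0=0$ gives $\partial_\nu u^0=0$ on $\gamma_i$ as well; the vanishing Cauchy data $(u^0,\partial_\nu u^0)=(0,0)$ on $\gamma_i$ and unique continuation then yield $u^0\equiv0$ in the connected domain $\Om$, whence $u^{0}-u^i=-u^i$ satisfies the radiation condition, which is impossible since the plane wave $u^i$ does not. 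This is precisely why Theorem \ref{ThStabImpClas} requires none of the geometric hypotheses of Proposition \ref{PropUniqldmu}. With $c_{\mK}:=\min_i c^i_{\mK}>0$ in hand, combining the lower bound with (\ref{EqLeft}) gives $\frac{|\ld_i^1-\ld_i^2|}{\|\ld^1-\ld^2\|_{L^\infty(\pD)}}\,c_{\mK}\leq f(\beta)$; taking the maximum over $i$ and using that $f$ is increasing produces, exactly as at the end of the proof of Theorem \ref{ThStabImpGen}, a lower bound $\beta\geq\min(\beta_1,\beta_2)$ with $\beta_1,\beta_2$ independent of $\ld$ and $\mu$, which is the claimed estimate.
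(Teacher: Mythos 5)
Your proposal is correct and is essentially the paper's own argument: the paper proves this theorem by stating that it ``follows the same lines as the proof of Theorem \ref{ThStabImpGen}'' and leaves the details to the reader, and your write-up is precisely that adaptation, with the auxiliary function $v=(u^2-u^1)/\|\ld^2-\ld^1\|_{L^\infty(\pD)}$, the source term $-(\ld^2-\ld^1)u^2/\|\ld^2-\ld^1\|_{L^\infty(\pD)}$, and the identical upper-bound machinery. Your treatment of the one step that genuinely changes --- the lower bound $\inf_{(\ld,\mu)\in\mK}\|u\|_{L^2(\gamma_i)}>0$ via vanishing Cauchy data, unique continuation, and the radiation condition --- is sound and correctly explains why this theorem, unlike Theorem \ref{ThStabImpGen}, needs none of the geometric hypotheses of Proposition \ref{PropUniqldmu}.
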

\begin{proof}
 The proof of this result follows the same lines as the proof of Theorem \ref{ThStabImpGen}. The details are left to the readers.
\end{proof}
We expect that when $I \rightarrow +\infty$, the constant $C_{\mK}$ in Theorems \ref{ThStabImpGen} and \ref{ThStabImpClas} grows exponentially with $I$, as proved for similar Lipschitz stability estimates in \cite{Sin07}. The proof would be based on results established in \cite{Man01,DiCRon03}.
 Let us notice that up to our knowledge, the problem of global stability when both coefficients $\ld$ and $\mu$ are unknowns is still open. For instance, the technique used in the proof of Theorem \ref{ThStabImpGen} cannot be applied in this case.
 \section{A stability estimate in the case of inexact knowledge of the obstacle}
\subsection{The forward and inverse problems for a perturbed obstacle}
\noindent  Here, we are interested in the stability of the reconstruction of $\lambda$ and $\mu$ when exact knowledge of the geometry $\partial D$ is not available. We assume that $D$ is of class $C^1$ and we consider problem (\ref{PbBorne}) with a ``perturbed'' geometry $D_{\ve}$ of $D$ such that one can find a function $\ve \in (C^{1}(\bR^d))^d$ which is compactly supported in $B_R$ and
\[
   \pDe = \{x+\ve(x) \; ; \;  x \in \pD \} \,.
\]
Whenever  
$\| \ve \| <1$, where $\| \cdot \|$ stands for  the $(W^{1,\infty}(\bR^d))^d$ norm
\[
 \|\cdot \| = \|\cdot\|_{(L^{\infty}(\bR^d))^d} + \|\nabla\cdot\|_{(L^\infty(\bR^d))^{d\times d}}, 
\]
the map $f_{\ve} := Id + \ve$ where $Id$ stands for the identity of $\bR^d$, is a $C^1$--diffeomorphism of $\bR^d$ (see chap 5. of \cite{HenPie}). 
\paragraph*{The inverse problem.}
Assume that $\pDe$ is known and that an approximation $\ld_{\ve}$ and $\mu_{\ve}$ of the exact impedance coefficients is available i.e. for some $\delta \geq 0$
\[
   \Vert T(\ld_{\ve},\mu_{\ve},\pDe) - T(\ld,\mu,\pD) \Vert_{L^2(S^{d-1})} \leq \delta \,.
\]
From this information and from the distance between $\pD$ and $\pDe$, can we have an estimate on the boundary coefficients? In other words, do we have 
\begin{equation}
\label{Stab}
 \|\ld_{\ve}\circ f_{\ve} - \ld \|_{L^{\infty}(\pD)}+\|\mu_{\ve}\circ f_{\ve} - \mu \|_{L^{\infty}(\pD)} \leq g(\delta,\ve) 
\end{equation}
for some function $g$ such that $g(\delta,\ve) \rightarrow 0$ as $\delta \rightarrow 0$ and $\ve \rightarrow 0$? In order to prove such a result, we first need a continuity property of $T$ with respect to $\pD$.

\subsection{Continuity of the far--field  with respect to the obstacle}
In the following, we assume that $\| \ve \| <1$.
To prove the continuity of the far--field with respect to the obstacle, we first establish this result for the scattered field. Define $\wt{\ld}_\ve :=\ld\circ f_{\ve}^{-1}$ and $\wt{\mu}_\ve := \mu\circ f_{\ve}^{-1}$ two elements of $L^{\infty}(\pDe)$. To evaluate the distance between the solution $u$ of $\mathcal{P}(\ld,\mu,\pD)$ and the solution $u_{\ve}$ of  $\mathcal{P}(\wt{\ld}_\ve,\wt{\mu}_\ve,\pD_{\ve})$ we first transport $u_{\ve}$ on the fixed domain $\Om_R$ by using the $C^1$--diffeomorphism $f_\ve$ between $\Om_R^{\ve} := \Om_R \setminus \overline{D_\ve}$ and $\Om_R$. 
\begin{figure}
\centering
 \includegraphics[scale=0.3]{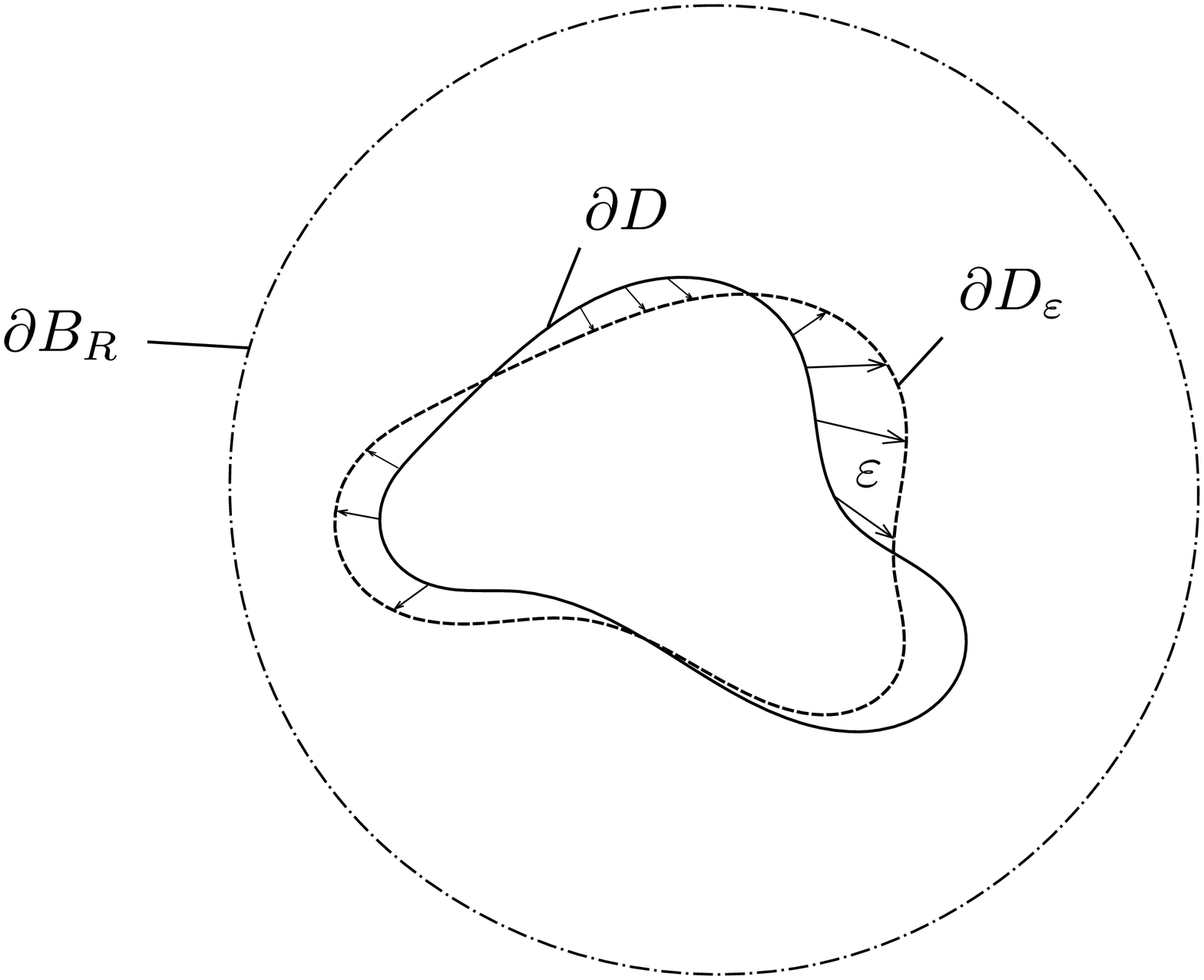}
\caption{Exact and perturbed obstacles.}
\end{figure}
\noindent Define $\wt{u}_{\ve} := u_{\ve} \circ f_{\ve}$ where $u_{\ve}$ is the solution of $\mathcal{P}(\wt{\ld}_\ve,\wt{\mu}_\ve,\pD_{\ve})$, then the following estimate holds.
 \begin{theorem}
\label{ThContScat}
  Let $(\ld,\mu)$ be in $K $. There exists two constants $\ve_0>0$ and $C_K$ which depend only on $K$ such that for all $\|\ve\|\leq \ve_0$ we have:
\begin{equation*}
\label{uinfini}
   \Vert \wt{u}_{\ve}- u  \Vert_{V_R} \leq  C_K \Vert \ve \Vert.
\end{equation*}
\end{theorem}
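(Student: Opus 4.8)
The plan is to transport the perturbed problem onto the fixed domain $\Om_R$ via the diffeomorphism $f_\ve = Id + \ve$ and then treat it as a small perturbation of the original variational formulation (\ref{EqVarForm}). By definition $u_\ve$ solves the variational problem associated with $\mathcal{P}(\wt{\ld}_\ve,\wt{\mu}_\ve,\pDe)$ on $\Om_R^\ve$; performing the change of variables $y = f_\ve(x)$, which maps $\Om_R^\ve$ onto $\Om_R$ and $\pDe$ onto $\pD$, turns this into a problem for $\wt{u}_\ve = u_\ve\circ f_\ve$ on the fixed space $V_R$. That is, there is a perturbed bilinear form $a_\ve$, with associated operator $A_\ve$ defined by $(A_\ve w,v)_{V_R} = a_\ve(w,v)$, such that $a_\ve(\wt{u}_\ve,v) = l(v)$ for all $v \in V_R$. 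The crucial simplification is that $\ve$ is compactly supported in $B_R$, so $f_\ve$ coincides with the identity near $\partial B_R$; consequently the Dirichlet--to--Neumann contribution $\langle S_R \cdot,\cdot\rangle$ and the right--hand side $l$ are left unchanged by the transport, and only the volume and surface integrals acquire Jacobian factors.

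First I would write $a_\ve$ explicitly. With $J_\ve = I + \nabla\ve$, the volume terms produce the matrix coefficient $M_\ve := |\det J_\ve|\,J_\ve^{-1}J_\ve^{-T}$ in front of $\nabla\wt{u}_\ve\cdot\nabla\overline{v}$ and the scalar $|\det J_\ve|$ in front of $k^2\wt{u}_\ve\overline{v}$, while the surface terms produce analogous coefficients built from the tangential part of $J_\ve$ together with the surface Jacobian relating $ds_\ve$ to $ds$. Since $\|\ve\| < 1$, a Neumann expansion of $J_\ve^{-1}$ gives the elementary bounds $\|M_\ve - I\|_{L^\infty} \leq C\|\ve\|$, $\| \,|\det J_\ve| - 1\, \|_{L^\infty} \leq C\|\ve\|$, and their surface analogues, all linear in $\|\ve\|$. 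Combined with the boundedness of $\ld$ and $\mu$ on $K$, these yield the operator estimate $|||A - A_\ve||| \leq C_K\|\ve\|$.

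Next I would use this smallness, together with Proposition \ref{PropBound}, to obtain a uniform a priori bound on $\wt{u}_\ve$. Writing $A_\ve = A - (A - A_\ve)$ with $|||A^{-1}||| \leq C_K$ and $|||A - A_\ve||| \leq C_K\|\ve\|$, a Banach perturbation argument shows that $A_\ve$ remains invertible with $|||A_\ve^{-1}||| \leq 2C_K$ as soon as $\|\ve\| \leq \ve_0 := 1/(2C_K^2)$, whence $\|\wt{u}_\ve\|_{V_R} = \|A_\ve^{-1}F\|_{V_R} \leq C_K$ uniformly in $\ve$. Finally, subtracting the identities $a(u,v) = l(v) = a_\ve(\wt{u}_\ve,v)$ gives
\[
a(\wt{u}_\ve - u, v) = (a - a_\ve)(\wt{u}_\ve, v) \qquad \forall v \in V_R,
\]
whose right--hand side is bounded by $C_K\|\ve\|\,\|\wt{u}_\ve\|_{V_R}\|v\|_{V_R} \leq C_K\|\ve\|\,\|v\|_{V_R}$; applying $|||A^{-1}||| \leq C_K$ one last time yields $\|\wt{u}_\ve - u\|_{V_R} \leq C_K\|\ve\|$, which is the claim.

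I expect the main obstacle to be the surface terms. The transformation rule for the surface gradient and surface divergence under $f_\ve$, and the corresponding surface--measure Jacobian, are genuinely more delicate than the volume change of variables, since they involve the restriction of $J_\ve$ to the tangent bundle of $\pD$ and the induced metric; verifying that the associated coefficients differ from the identity by $O(\|\ve\|)$ in $L^\infty$, uniformly over the boundary, is the technical heart of the argument. This is precisely where the $C^1$ regularity of both $\pD$ and $\ve$ enters, and where I would rely on the shape--calculus machinery of \cite{HenPie}.
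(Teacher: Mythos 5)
Your proposal follows essentially the same route as the paper's proof: transport the perturbed problem to the fixed domain via $f_\ve$ (using that $\ve$ vanishes near $\partial B_R$ so $l$ and the $S_R$ term are untouched), establish $|||A_\ve - A||| \leq C_K \Vert \ve \Vert$ by Neumann expansion of $(\nabla f_\ve)^{-1}$ with the surface-gradient terms singled out as the technical core (this is exactly the paper's Lemma \ref{LeDevGrad}, proved there via local parametrizations and the relation $\nabla_{\pDe}(w\circ f_\ve^{-1})\circ f_\ve = (\nabla f_\ve)^{-T}\nabla_{\pD}w$), then a Neumann-series perturbation argument giving $|||A_\ve^{-1}||| \leq 2C_K$ for $\Vert\ve\Vert \leq 1/(2C_K^2)$, and a concluding algebraic identity. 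The only cosmetic difference is that you conclude from $\wt{u}_\ve - u = A^{-1}\left(A - A_\ve\right)\wt{u}_\ve$ while the paper uses $\wt{u}_\ve - u = A_\ve^{-1}\left(A - A_\ve\right)u$; both variants need the same ingredients and yield the same bound.
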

\noindent We denote by $O(y)$ a $C^{\infty}(\bR)$ function such that  
\begin{equation*}
\label{GrandO}
 |O(y)| \leq C|y| \qquad \forall y \in \bR
\end{equation*}
for $C>0$ independent of $y$. In the proof of the Theorem we will need the following technical Lemma whose proof is postponed to the end of the Theorem's proof.
\begin{lemma}
\label{LeDevGrad}
There exists a constant $C>0$ independent of $\ve$ such that
\begin{eqnarray*}
 \fl\left|\int_{\pDe}(\mu\circ f_\ve^{-1})\nabla_{\pDe}(u\circ f_\ve^{-1})\right.&\left.\cdot \nabla_{\pDe}(v\circ f_\ve^{-1}) ds_\ve -\int_{\pD}\mu\nabla_{\pD} u\cdot \nabla_{\pD} v ds\right|
\\\fl&\leq C\|u\|_{H^1(\pD)}\|v\|_{H^1(\pD)}\|\mu\|_{L^\infty(\pD)}\|\ve\|,
\end{eqnarray*}
 for all $u$, $v$  in  $H^1(\pD)$ and $\mu \in L^{\infty}(\pD)$.
\end{lemma}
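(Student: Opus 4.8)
The plan is to transport the integral over $\pDe$ back to the fixed surface $\pD$ via the diffeomorphism $f_\ve = Id + \ve$, so that the difference of the two integrals becomes a single integral over $\pD$ with an integrand that is $O(\|\ve\|)$. Two ingredients are needed for the change of variables: the surface (tangential) Jacobian $J_\ve$, defined by $ds_\ve = J_\ve\,ds$ under $y=f_\ve(x)$, and a transport rule for the surface gradient. For the former, the standard formula $J_\ve = \det(Df_\ve)\,|(Df_\ve)^{-T}\nu|$ together with $Df_\ve = I + D\ve$ and $\|D\ve\|_{L^\infty}\le\|\ve\|$ gives the first--order expansion $|J_\ve-1|\le C\|\ve\|$ uniformly for $\|\ve\|\le\ve_0<1$, with $C$ depending only on $\pD$.

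For the surface gradient I would use the tangential projection $P_{\pD} = I - \nu\otimes\nu$ and the identity $\nabla_{\pD} u = P_{\pD}\nabla\wt{u}$, valid for any $\bR^d$--extension $\wt u$ of $u$. Applying the chain rule to the full gradient of $\wt u\circ f_\ve^{-1}$ and evaluating at $y = f_\ve(x)$ yields
\[
\nabla_{\pDe}(u\circ f_\ve^{-1})(f_\ve(x)) = P_{\pDe}(f_\ve(x))\,(Df_\ve(x))^{-T}\nabla\wt u(x) =: G_\ve(x)\nabla\wt u(x).
\]
The key structural fact, standard in shape calculus (chap.\ 5 of \cite{HenPie}), is that the normal to $\pDe$ at $f_\ve(x)$ is parallel to $(Df_\ve)^{-T}\nu$, whence $G_\ve\nu = 0$. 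Consequently $G_\ve\nabla\wt u = G_\ve\nabla_{\pD} u$ is independent of the chosen extension and involves only the surface gradient.

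A change of variables then gives the exact identity
\[
\int_{\pDe}(\mu\circ f_\ve^{-1})\nabla_{\pDe}(u\circ f_\ve^{-1})\cdot\nabla_{\pDe}(v\circ f_\ve^{-1})\,ds_\ve = \int_{\pD}\mu\,\mathcal{A}_\ve\nabla_{\pD} u\cdot\nabla_{\pD} v\,ds,
\]
with $\mathcal{A}_\ve := J_\ve\,G_\ve^{T}G_\ve$. Using $P_{\pDe}(f_\ve(x)) = P_{\pD}(x) + O(\|\ve\|)$ (the normal varies by $O(\|D\ve\|)$), $(Df_\ve)^{-T} = I + O(\|\ve\|)$, the idempotency $P_{\pD}^{T}P_{\pD} = P_{\pD}$, and $J_\ve = 1 + O(\|\ve\|)$, one obtains $\|\mathcal{A}_\ve - P_{\pD}\|_{L^\infty(\pD)}\le C\|\ve\|$ as matrices acting on the tangent spaces. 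Since $\nabla_{\pD} u$ and $\nabla_{\pD} v$ are tangential, $P_{\pD}\nabla_{\pD} u = \nabla_{\pD} u$, so the difference in the lemma equals $\int_{\pD}\mu\,(\mathcal{A}_\ve - P_{\pD})\nabla_{\pD} u\cdot\nabla_{\pD} v\,ds$, and the Cauchy--Schwarz inequality yields the bound $C\|u\|_{H^1(\pD)}\|v\|_{H^1(\pD)}\|\mu\|_{L^\infty(\pD)}\|\ve\|$.

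The step I expect to be most delicate is the transport rule for $\nabla_{\pDe}$, in particular the identity $G_\ve\nu=0$, together with making the first--order expansions of $J_\ve$, $P_{\pDe}\circ f_\ve$ and $(Df_\ve)^{-T}$ uniform in $\ve$ under only the $C^1$ regularity of $\pD$; once these are in hand, the remaining estimates are routine.
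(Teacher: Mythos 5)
Your proof is correct, and at the top level it follows the same strategy as the paper: pull the integral over $\pDe$ back to $\pD$ through $f_\ve$ using the surface Jacobian $J_\ve^\nu=\det(\nabla f_\ve)\,|(\nabla f_\ve)^{-T}\nu|$, transport the surface gradient, and conclude from the uniform first--order expansions $(\nabla f_\ve)^{-T}=Id+O(\|\ve\|)$, $J_\ve^\nu=1+O(\|\ve\|)$ together with Cauchy--Schwarz. Where you genuinely differ is in the derivation of the gradient transport rule. The paper works intrinsically: it takes a local parametrization $\vp$ of $\pD$, parametrizes $\pDe$ by $\vp_\ve=f_\ve\circ\vp$, transports the tangent basis by $e_{\ve,i}=\nabla f_\ve\, e_i$ and the dual basis by $e_\ve^i=(\nabla f_\ve)^{-T}e^i$, and obtains $\nabla_{\pDe}(w\circ f_\ve^{-1})\circ f_\ve=(\nabla f_\ve)^{-T}\nabla_{\pD}w$ with no projection. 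You work extrinsically, via an extension and the projector $P_{\pD}=Id-\nu\otimes\nu$, and your rule carries the extra factor $P_{\pDe}$; the identity $G_\ve\nu=0$, coming from $\nu_\ve\parallel(\nabla f_\ve)^{-T}\nu$, makes the result extension--independent. Your version is in fact the more scrupulous one: the vector $(\nabla f_\ve)^{-T}e^i$ used in the paper satisfies the duality relations $e_\ve^i\cdot e_{\ve,j}=\delta_j^i$ but need not be tangent to $\pDe$, so the paper's formula holds only up to a normal component of size $O(\|\ve\|)$ (harmless for the final estimate, since it contributes at the same order, but your projector accounts for it exactly). Two small points you should tighten: the formula $\nabla_{\pD}u=P_{\pD}\nabla\wt u$ presupposes an ambient extension, so for $u$ merely in $H^1(\pD)$ you should either argue by density of smooth functions (both sides of your exact identity are continuous in the $H^1(\pD)$ norms) or define $\nabla_{\pD}u$ intrinsically as the paper does; and the uniformity in $x$ of the expansion $P_{\pDe}(f_\ve(x))=P_{\pD}(x)+O(\|\ve\|)$ should be stated as following from $\nu_\ve=(\nabla f_\ve)^{-T}\nu/|(\nabla f_\ve)^{-T}\nu|$ and the uniform bound $\|\nabla\ve\|_{(L^\infty(\bR^d))^{d\times d}}\leq\|\ve\|$, which is exactly where the $C^1$ regularity of $\pD$ and $\ve$ enters.
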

\begin{proof}[Proof of Theorem \ref{ThContScat}]
We recall the weak formulation of $\mathcal{P}(\ld,\mu,\pD)$: find $u \in V_R$ such that
\[
a(u,v) = l(v) \quad \forall v \in V_R.
\]
Similarly, the weak formulation of $\mathcal{P}(\wt{\ld}_\ve,\wt{\mu}_\ve,\pDe)$ is: find $u_{\ve} \in V_R^{\ve}:=\{v \, \in \, H^1(\Om _R^\ve);\, v_{|\pDe} \, \in \,  H^1(\pDe)\}$ such that 
\[
   a_{\ve}(u_{\ve},v_{\ve})= l(v_{\ve}) \quad \forall v_\ve \in V_R^{\ve}
\]
where
\begin{eqnarray*}
\fl a_{\ve}(u_{\ve},v_{\ve}) := \int_{\Om^{\ve} _R} (\nabla u_{\ve} \cdot \nabla \overline{v}_{\ve} -k^2 u_{\ve}\overline{v}_{\ve}) dx_\ve  +\int_{\pDe} (\wt{\mu}_\ve \nabla_{\pDe} u_{\ve}\cdot \nabla_{\pDe}\overline{v}_{\ve}-\wt{\ld}_\ve u_{\ve}\overline{v}_{\ve})ds_\ve-\langle S_R u_{\ve},v_{\ve} \rangle.
\end{eqnarray*}
We define a new bilinear form on $V_R$
\[
 \wt{a}_\ve(u,v) := a_\ve(u\circ f_\ve^{-1},v\circ f_\ve^{-1}) \quad  \forall u,\, v \in V_R.
\]
Since $f_\ve(\partial B_R) = \partial B_R$, we have $l(v) = l(v\circ f_\ve^{-1})$ and $\wt{u}_\ve$ is solution of
\[
 \wt{a}_\ve(\wt{u}_\ve,v) = l(v) \quad \forall v \in V_R.
\]
In addition, for all $v\in V_R$, $S_Rv = S_R (v\circ f_{\ve}^{-1})$ and as a consequence for all $u,v \in V_R$ we have $\langle S_R u,v \rangle=\langle S_R (u\circ f_\ve^{-1}),v\circ f_{\ve}^{-1}\rangle$.  Using  the change of variables formula for integrals (see chap. 5 of \cite{HenPie}), we have  :
\begin{eqnarray*}
\fl
  \wt{a}_{\ve}(u,v)=\int_{\Om_R} (\nabla u \cdot P_{\ve}  \cdot\nabla \overline{v} -k^2 u \,\overline{v})J_{\ve} dx&& 
		+\int_{\pDe} \wt{\mu}_\ve [\nabla_{\pDe}(u\circ f_\ve^{-1})]\cdot[\nabla_{\pDe}(\overline{v\circ f_\ve^{-1}})]ds_\ve \\
		 &&-\int_{\pD} \ld u\,\overline{v}J^\nu_{\ve} ds -\langle S_R u,v \rangle 
\end{eqnarray*}
where  $J_{\ve} :=\left|\det(\nabla f_{\ve})\right|$,  $P_{\ve}:=(\nabla f_{\ve})^{-1} (\nabla f_{\ve})^{-T}$ and $J^\nu_{\ve} := J_{\ve}\left|(\nabla f_{\ve})^{-T}\nu\right|$ (for a matrix $B$, $B^{-T}$ denotes the transpose of the inverse of $B$). 
Thanks to Neumann series, we have the following development for $(\nabla f_\ve(x))^{-1} = (Id + \nabla\ve(x))^{-1}$ uniformly for    $x \in \pD$
\[
(\nabla f_\ve)^{-1}(x) = \sum_{n=0}^\infty(-1)^n (\nabla \ve(x))^n = (1+O(\|\ve\|))Id.
\]
As a consequence $P_{\ve}(x)$ expands uniformly for $x \in \pD$ as
\[
   P_{\ve}(x) = Id(1 + O(\|\ve\|)) 
\]
 and we also have
\[
 J_{\ve}(x) = 1 +O(\|\ve\|) \quad  \mbox{and} \quad J^\nu_{\ve}(x) = 1 +O(\|\ve\|)
\]
since $\det(Id + \nabla\ve(x)) = 1+\Div(\ve)+O(\|\ve\|^2)$.  Using all these results and Lemma \ref{LeDevGrad} we infer that for $u$ and $v$ two functions of $V_R$
\[
\fl \left |\int_{\Om_R} (\nabla u \cdot P_{\ve}  \cdot\nabla \overline{v}J_{\ve} -k^2 u \,\overline{v}J_{\ve} - \nabla u  \cdot\nabla \overline{v} + k^2 u \,\overline{v}) dx \right| \leq  C\Vert u \Vert_{V_R} \Vert v \Vert_{V_R} \|\ve\|,
\]
\[
 \fl\left|\int_{\pD}\{\ld u\,\overline{v}J^\nu_{\ve} - \ld u \, \overline{v} \}ds\right|\leq C\Vert \ld \Vert_{L^{\infty}(\pD)}\Vert u \Vert_{V_R} \Vert v \Vert_{V_R}\|\ve\|,
\]
\begin{eqnarray*}
\fl\left| \int_{\pDe}(\mu\circ f_\ve^{-1})\nabla_{\pDe}(u\circ f_\ve^{-1})\right.&\left.\cdot \nabla_{\pDe}(v\circ f_\ve^{-1}) ds_\ve -\int_{\pD}\mu\nabla_{\pD} u\cdot \nabla_{\pD} v ds\right|
\\&\leq C\|u\|_{V_R}\|v\|_{V_R)}\|\mu\|_{L^\infty(\pD)}\|\ve\|,
\end{eqnarray*}
where the constant $C$ does not depend on $\ve$. One obtains for the bilinear forms $\wt{a}_{\ve}$ and $a$:
\begin{equation}
\label{EqEstBil}
     |\wt{a}_{\ve}(u,v) - a(u,v)| \leq  C_K  \Vert u \Vert_{V_R} \Vert v \Vert_{V_R}\Vert \ve \Vert 
\end{equation}
where $C_K$ does not depend on $\ld$, $\mu$ and $\ve$. 
Thanks to the Riesz representation theorem we uniquely define $A_{\ve}$ from $V_R$ into itself by
\[
   (A_{\ve} v,w)_{V_R} = \wt{a}_{\ve}(v,w) \quad \forall\; v,w \; \in V_R.
\]
We recall the definition of the operator $A$ of $V_R$
\[
 (Av,w)_{V_R} = a(v,w) \quad  \forall v,w \in V_R
\]
and $F$ in $V_R$ is defined by
\[
 (F,w)_{V_R} = l(w) \quad  \forall w \in V_R.
\]
Thanks to inequality (\ref{EqEstBil}), we have
\begin{equation*}
   ||| A_{\ve} - A ||| \leq C_K \Vert \ve \Vert .
\end{equation*}
To have information on the scattered field we should have information on the inverse of the operators and we will use once again the results on Neumann series of \cite{Kre}. Actually, as soon as $ |||A^{-1}(A_{\ve} - A)||| \leq 1/2 $ which is true when $\ve_0 \leq 1/(2C_K^2)$ (see Proposition \ref{PropBound}), the  inverse operator of $A_{\ve}$ satisfies
\[
   ||| A_{\ve} ^{-1} ||| \leq \frac{||| A^{-1}|||}{1- |||  A^{-1}(A_{\ve}-A)|||} \leq 2 C_K.
\]
From the identity $\wt{u}_{\ve} - u = A_{\ve}^{-1}(A-A_{\ve}) u$ we deduce 
\begin{eqnarray*}
   \Vert \wt{u}_{\ve}-u \Vert_{V_R} &=& \Vert A_{\ve}^{-1}((A-A_{\ve})u) \Vert_{V_R}\\
&\leq& ||| A_{\ve}^{-1}|||  \Vert (A_{\ve} - A)u \Vert_{V_R}\\
&\leq& 2C_K^2\Vert \ve \Vert\|u\|_{V_R} \leq 2C_K^3\Vert \ve \Vert\|F\|_{V_R} \\
\end{eqnarray*}
where we again used Proposition \ref{PropBound} for the last inequality. This provides the desired estimate.
\end{proof}
\begin{proof}[Proof of Lemma \ref{LeDevGrad}\\]
  Let us consider three functions $u \in H^1(\pD)$, $v \in H^1(\pD)$ and $\mu \in L^\infty(\pD)$ and let $x_0 \in \pD$ . There exists a function $\vp$ of class $C^1$ and an open set $U\subset \bR^{d-1}$ such that
\[
 \pD \cap V = \{\vp(\xi)\;;\;\xi \in U\}
\]
where $V$ is a neighbourhood of $x_0$ and $\vp(0) = x_0$ and such that
\[
 e_i := \D{\vp}{\xi_i}(0)  \;, \mbox{ for } i=1,\cdots,d-1
\]
form a basis of the tangential plane to $\pD$ at $x_0$. We can also use this parametrization to describe a surfacic neighbourhood of $x_{0,\ve} := f_\ve(x_0)$, similarly there exists a neighbourhood $V_\ve$ of $x_{0,\ve}$ such that
\[
 \pDe\cap V_\ve = \{\vp_\ve(\xi)\;;\; \xi \in U\}
\]
where $\vp_\ve := f_\ve \circ \vp$. We define the tangential vectors of $\pDe$ at point $x_{0,\ve} = \vp_\ve(0)$ by
\[
 e_{\ve,i} := \D{\vp_\ve}{\xi_i}(0)  \; \mbox{ for } i=1,\cdots,d-1
\]
and thanks to the chain rule
\begin{equation}
 \label{EqChgt}
 e_{\ve,i} = \nabla f_\ve(x_0) e_i.
\end{equation}
Remark that as $\nabla f_\ve(x_0)$ is an invertible matrix ($\|\ve\|<1$), the family $e_{\ve,i}$ is a basis of the tangent plane to $\pDe$ at $x_{\ve,0}$. Finally, we define the covariant basis of the cotangent planes of $\pD$ at point $x_0$ and of $\pDe$ at point $x_{0,\ve}$  by
\[
 e^i\cdot e_j = \delta_j^i \; \mbox{ and } \; e_{\ve}^i\cdot e_{\ve,j} =  \delta_j^i \quad \mbox{for } i,j =1,d-1.
\]
Using this definition and (\ref{EqChgt})  we have the relation
\[
 e_{\ve}^i = ( \nabla f_\ve )^{-T} e^i \quad i=1,\cdots,d-1.
\]
Finally in the covariant basis, the tangential gradient $\nabla_{\pD}$ for $w \in H^1(\pD)$ at point $x_0$ is
\[
 \nabla_{\pD} w (x_0) = \sum_{i=1}^{d-1}\D{\wt{w}}{\xi_i}(0)e^i
\]
where $\wt{w}:= w\circ \vp$. Similarly, at  point $x_{0,\ve}$ we have for $w_\ve \in H^1(\pDe)$
\[
 \nabla_{\pDe} w_\ve (x_{0,\ve}) =\sum_{i=1}^{d-1} \D{\wt{w}_\ve}{\xi_i}(0)e^i_\ve
\]
where $\wt{w}_\ve :=w_\ve \circ \vp_\ve$. As a consequence, for $w\in H^1(\pD)$
\begin{eqnarray*}
  \nabla_{\pDe} (w\circ f_\ve^{-1})(x_{0,\ve}) &= \sum_{i=1}^{d-1} \D{\wt{w}}{\xi_i}(0)e^i_\ve \\
&= \sum_{i=1}^{d-1}\D{\wt{w}}{\xi_i}(0)(\nabla f_\ve (x_0))^{-T}e^i \\
&=(\nabla f_\ve(x_0) )^{-T} \nabla_{\pD} w (x_0)
\end{eqnarray*}
because $w\circ f_\ve^{-1} \circ \vp_\ve = \wt{w}$. By this formula, we just proved that for all $x_\ve = f_\ve(x),\, x \in \pD$ we have
\begin{equation}
\label{EqChgt2}
 \nabla_{\pDe} (w\circ f_\ve^{-1})(x_{\ve})=(\nabla f_\ve(x) )^{-T} \nabla_{\pD} w (x)
\end{equation}
for all $w \in H^1(\pD)$. For $u$, $v$ and $\mu$, change of variables in the boundary integral ($x=f^{-1}_\ve(x_\ve)$) gives
\begin{eqnarray*}
 \int_{\pDe} &(\mu\circ f_\ve^{-1}) \nabla_{\pDe} (u\circ f_\ve^{-1}) \cdot \nabla_{\pDe} (v\circ f_\ve^{-1}) dx_\ve \\&= \int_{\pD} \mu(x) [\nabla_{\pDe} (u\circ f_\ve^{-1})(f_\ve(x)) ]\cdot [\nabla_{\pDe} (v\circ f_\ve^{-1})(f_\ve(x))] J^\nu_\ve dx
\end{eqnarray*}
 and thanks to the relation (\ref{EqChgt2})
\begin{eqnarray*}
 \int_{\pDe}& (\mu\circ f_\ve^{-1}) \nabla_{\pDe} (u\circ f_\ve^{-1}) \cdot \nabla_{\pDe} (v\circ f_\ve)^{-1} dx_\ve \\&= \int_{\pD} \mu(x) [\nabla_{\pD} u (x) ] (\nabla f_\ve(x) )^{-1}(\nabla f_\ve(x) )^{-T} [\nabla_{\pD} v(x)]J^\nu_\ve dx.
\end{eqnarray*}
Finally recalling that
\[
 \fl (\nabla f_\ve(x) )^{-1}(\nabla f_\ve(x) )^{-T} = P_\ve(x)= (1+O(\|\ve\|))Id \quad \mbox{and} \quad J^\nu_\ve = 1+O(\|\ve\|)
\]
we may write
\begin{eqnarray*}
\fl \int_{\pDe} (\mu\circ f_\ve^{-1}) \nabla_{\pDe} (u\circ f_\ve^{-1}) \cdot \nabla_{\pDe} (v\circ f_\ve^{-1}) dx_\ve = (1+O(\|\ve\|))\int_{\pD} \mu \nabla_{\pD} u\cdot \nabla_{\pD} v dx
\end{eqnarray*}
which is the desired result.
\end{proof}
 \begin{corollary}
\label{ThCont}
  There exists two constants $\ve_0>0$ and $C_K$ such that 
\[
    \Vert T(\ld\circ f_\ve^{-1},\mu\circ f_\ve^{-1},\pDe) -  T(\ld,\mu,\pD) \Vert_{L^2(S^{d-1})} \leq  C_K\Vert \ve \Vert,
\]
for all $(\ld,\mu) \in K$ and $\|\ve\|\leq \ve_0$.
\end{corollary}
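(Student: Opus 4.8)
The plan is to deduce the corollary from the scattered-field estimate of Theorem \ref{ThContScat} by passing through the far-field representation formula (\ref{uinf}), evaluated on a single integration surface that does \emph{not} move with $\ve$. The starting observation is that (\ref{uinf}) holds with $\Gamma$ the boundary of \emph{any} regular domain enclosing the scatterer, so I fix once and for all a sphere $\Gamma=\partial B_{R_0}$ with $\overline D\subset B_{R_0}$ and $R_0<R$, chosen so that the support of $\ve$ is contained in $B_{R_0'}$ for some $R_0'<R_0$; since the support of $\ve$ and $\overline D$ are compact subsets of $B_R$, such radii exist. Then $\ve$ vanishes in the annular neighbourhood $\{R_0'<|x|<R\}$ of $\Gamma$, and for $\ve_0$ small enough one also has $\overline{D_\ve}\subset B_{R_0}$. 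Consequently both $u^s$ and $u_\ve^s$ are radiating Helmholtz solutions outside $B_{R_0}$, and their far-fields $T(\ld,\mu,\pD)$ and $T(\wt\ld_\ve,\wt\mu_\ve,\pDe)$ are \emph{both} given by (\ref{uinf}) with the \emph{same} $\Gamma$.

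Because $\ve=0$ near $\Gamma$, we have $f_\ve=\mathrm{Id}$ there, hence $\wt u_\ve=u_\ve\circ f_\ve=u_\ve$ in a neighbourhood of $\Gamma$; subtracting the common incident field gives $u^s-u_\ve^s=u-\wt u_\ve=:w$ on $\Gamma$. Subtracting the two representation formulas then yields
\[
\fl T(\ld,\mu,\pD)(\hat x)-T(\wt\ld_\ve,\wt\mu_\ve,\pDe)(\hat x)=\int_\Gamma\left(w\,\frac{\partial\Phi^{\infty}(y,\hat x)}{\partial\nu(y)}-\frac{\partial w}{\partial\nu}\,\Phi^{\infty}(y,\hat x)\right)ds(y).
\]
Since $y\mapsto\Phi^{\infty}(y,\hat x)$ and its normal derivative are smooth on $\Gamma$ with bounds uniform in $\hat x\in S^{d-1}$ (here $\Phi^{\infty}$ is, up to a constant, $e^{-iky\cdot\hat x}$), the right-hand side is a bounded map from the Cauchy data of $w$ on $\Gamma$ into $L^2(S^{d-1})$, so that
\[
\left\|T(\ld,\mu,\pD)-T(\wt\ld_\ve,\wt\mu_\ve,\pDe)\right\|_{L^2(S^{d-1})}\leq C\left(\|w\|_{L^2(\Gamma)}+\left\|\frac{\partial w}{\partial\nu}\right\|_{L^2(\Gamma)}\right).
\]

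It remains to control the Cauchy data of $w$ on $\Gamma$ by $\|w\|_{V_R}$. In a neighbourhood $\mathcal N\Subset\Om_R$ of $\Gamma$ that avoids $\pD$, both $u$ and $\wt u_\ve=u_\ve$ satisfy $\La\cdot+k^2\cdot=0$, so $w\in H^1(\mathcal N)$ is a solution of the homogeneous Helmholtz equation there. Interior elliptic regularity upgrades this to $\|w\|_{H^2(\mathcal N')}\leq C\|w\|_{H^1(\mathcal N)}$ on a slightly smaller neighbourhood $\mathcal N'\supset\Gamma$, and the trace theorem then gives $\|w\|_{L^2(\Gamma)}+\|\partial_\nu w\|_{L^2(\Gamma)}\leq C\|w\|_{H^2(\mathcal N')}\leq C\|w\|_{V_R}$. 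Combining this with the previous display and invoking Theorem \ref{ThContScat}, which provides $\|w\|_{V_R}=\|\wt u_\ve-u\|_{V_R}\leq C_K\|\ve\|$ with $C_K$ and $\ve_0$ uniform over $(\ld,\mu)\in K$, yields exactly the claimed estimate, the uniformity of the constants being inherited from Theorem \ref{ThContScat}.

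The only genuinely delicate point is the reduction to a single, $\ve$-independent surface on which the two scattered fields can be compared directly: this is what forces the choice of $\Gamma$ inside the region where $f_\ve$ is the identity, so that the troublesome transported trace $\wt u_\ve$ coincides with the true perturbed field $u_\ve$ on $\Gamma$. Once this is arranged, the estimate is a routine assembly of the boundedness of the far-field map on fixed Cauchy data, interior regularity away from the obstacle, and the already-established scattered-field bound of Theorem \ref{ThContScat}.
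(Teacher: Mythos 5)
Your overall strategy (subtract two far--field representation formulas on an auxiliary surface, control the Cauchy data of the difference there, then invoke Theorem \ref{ThContScat}) is the same as the paper's, but the way you control the Cauchy data has a genuine gap: your sphere $\Gamma=\partial B_{R_0}$ and the annulus $\{R_0'<|x|<R\}$ are \emph{not} fixed ``once and for all'' --- you choose them so that $\mathrm{supp}\,\ve\subset B_{R_0'}$, so they depend on $\ve$. The corollary quantifies over \emph{all} $\ve\in(C^1(\bR^d))^d$ compactly supported in $B_R$ with $\|\ve\|\leq\ve_0$, and nothing prevents $\mathrm{supp}\,\ve$ from coming arbitrarily close to $\partial B_R$, or from filling most of $B_R\setminus\overline{D}$ so that only a thin shell remains where $\ve$ vanishes. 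Your argument is confined to that shell: where $\ve\neq 0$ the transported field $\wt{u}_\ve=u_\ve\circ f_\ve$ solves a transformed equation, not the Helmholtz equation, so $w=u-\wt{u}_\ve$ is a Helmholtz solution only on the $\ve$-dependent annulus. Consequently the interior-regularity constant in $\|w\|_{H^2(\mathcal N')}\leq C\|w\|_{H^1(\mathcal N)}$ degenerates as that annulus thins, and even your smallness threshold (you need roughly $\ve_0<R_0-R_0'$ to guarantee $\overline{D_\ve}\subset B_{R_0}$) becomes $\ve$-dependent. So the closing claim that ``the uniformity of the constants is inherited from Theorem \ref{ThContScat}'' is not justified: what you actually prove is the estimate with a constant depending on $\mathrm{supp}\,\ve$, which is weaker than the corollary and insufficient for its use in Theorem \ref{ThStabObs}, where $C_K$ and $\ve_0$ must depend only on $K$.

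The paper avoids exactly this issue. It writes the representation formula on the \emph{fixed} sphere $\partial B_R$, where every admissible $\ve$ vanishes together with $\nabla\ve$ (its support is compact in the open ball $B_R$), so $\wt{u}_\ve$ and $u_\ve$ have the same Cauchy data there; and instead of interior elliptic regularity it uses the boundedness of the DtN map $S_R:H^{1/2}(\partial B_R)\to H^{-1/2}(\partial B_R)$: since $u_\ve-u=u^s_\ve-u^s$ is radiating outside $B_R$, one has $\partial_r(u_\ve-u)=S_R(u_\ve-u)$ on $\partial B_R$, hence the normal derivative is bounded by the $H^{1/2}$ trace, which the trace theorem bounds by $\|\wt{u}_\ve-u\|_{V_R}$. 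All intervening constants (the norm of $S_R$, the trace constant, bounds on $\Phi^\infty$) are independent of $\ve$, so uniformity genuinely follows from Theorem \ref{ThContScat}. Your proof could be repaired either by adopting this DtN device, or by first reducing to perturbations supported in a fixed tubular neighbourhood of $\pD$ --- replace $\ve$ by an extension of $\ve|_{\pD}$ with $W^{1,\infty}$ norm controlled by $\|\ve\|$; this changes neither $\pDe$ nor the transported impedances, and then your fixed-annulus argument does give uniform constants --- but as written the uniform estimate asserted in the corollary is not established.
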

\begin{proof}Let $u^{\infty}$ be the far--field that corresponds to the obstacle  $D$  and $u^{\infty}_{\ve}$ the one that corresponds to the obstacle $D_{\ve}$. We use the integral representation formula for the far--field on $\partial B_R$ and as $\wt{u}_\ve|_{\partial B_R} = u_\ve|_{\partial B_R}$ we obtain 
\begin{eqnarray*}
   u^{\infty}_{\ve}(\hat{x})
 &=& \int_{\partial B_R}\left( \wt{u}_{\ve}(y)\frac{\partial \Phi ^{\infty}(y,\hat{x})}{\partial\nu(y)} - \frac{\partial \wt{u}_{\ve}(y)}{\partial\nu}\Phi ^{\infty}(y,\hat{x}) \right) ds(y) \,.
\end{eqnarray*}
The exterior DtN map $S_R$ defined in section \ref{SecDirect} is continuous from $H^{1/2}(\partial B_R)$ to $H^{-1/2}(\partial B_R)$ and as a consequence
\[
   \left\Vert \frac{\partial \wt{u}_{\ve}}{\partial \nu} \right\Vert _{H^{-1/2}(\partial B_R)} \leq C \Vert \wt{u}_{\ve} \Vert _{H^{1/2}(\partial B_R)}\, ,
\]
finally
\[
   \Vert  u^{\infty}_{\ve}(\hat{x})- u^{\infty}(\hat{x}) \Vert _{L^2(S^{d-1})} \leq C \Vert \wt{u}_{\ve} -u \Vert _{H^{1/2}(\partial B_R)} \,.
\]
The trace is continuous from $H^1(\Om_R)$ into $H^{1/2}(\partial B_R)$ so combining this last inequality with (\ref{uinfini}) one obtains the continuity result
\[
     \Vert T(\ld\circ f_\ve^{-1},\mu\circ f_\ve^{-1},\pDe) -  T(\ld,\mu,\pD) \Vert_{L^2(S^{d-1})} \leq C_K\Vert \ve \Vert \,.
\]
\end{proof}

\subsection{A stability estimate of type (\ref{Stab})}
In order to prove a stability estimate of type (\ref{Stab}) we first need to formulate a stability result for the case of an exact geometry. Following the stability results derived in Section \ref{SecStab}, we assume there exists a compact $K \subset (L^\infty(\pD))^2$ such that for $(\ld,\mu)\in K$ there exists a constant $C(\ld,\mu,K)$ which depends on $\ld$, $\mu$ and $K$ such that for all $(\wt{\ld},\wt{\mu}) \in K$ we have
\begin{equation}
\label{EqStabAs}
\fl
 \|\ld - \wt{\ld} \|_{L^{\infty}(\pD)} + \|\mu - \wt{\mu} \|_{L^{\infty}(\pD)}
 \leq C(\ld,\mu,K) \|T(\ld,\mu,\pD) - T(\wt{\ld},\wt{\mu},\pD)\|_{L^2(S^{d-1})} \,.
\end{equation}
We also refer to \cite[Section $4$]{BouHad} for examples of such compact $K$.
\begin{theorem}
\label{ThStabObs}
 There exists a constant $\ve_0$ which depends only on $K$ such that for all $(\ld,\mu) \in K$ there exists a constant $C(\ld,\mu,K)$ such that for all  $\|\ve\|\leq\ve_0$ and for all $(\ld_{\ve}\circ f_{\ve},\mu_{\ve}\circ f_{\ve}) \in K$ that satisfy
\[
    \Vert T(\ld_{\ve},\mu_{\ve},\pD_{\ve}) - T(\ld,\mu,\pD) \Vert_{L^2(S^{d-1})} \leq \delta \quad \mbox{for } \delta >0
\]
we have 
\[
   \Vert \ld_{\ve}\circ f_{\ve} - \ld \Vert_{L^{\infty}(\pD)} + \|\mu_{\ve}\circ f_{\ve} - \mu\|_{L^\infty(\pD)} \leq C(\ld,\mu,K)(\delta +  \|\ve\|).
\]
\end{theorem}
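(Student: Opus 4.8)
The plan is to combine the two ingredients already at hand, namely the continuity of the far--field with respect to the obstacle (Corollary \ref{ThCont}) and the exact--geometry stability assumption (\ref{EqStabAs}), through a single triangle inequality. The central device is to transport the perturbed coefficients back onto the reference boundary $\pD$: I set $\wt{\ld}_\ve := \ld_\ve \circ f_\ve$ and $\wt{\mu}_\ve := \mu_\ve \circ f_\ve$, which by the hypothesis of the theorem form a pair $(\wt{\ld}_\ve,\wt{\mu}_\ve) \in K$ living on $\pD$. The quantity to be estimated is then precisely $\|\wt{\ld}_\ve - \ld\|_{L^\infty(\pD)} + \|\wt{\mu}_\ve - \mu\|_{L^\infty(\pD)}$, to which the fixed--geometry estimate (\ref{EqStabAs}) applies verbatim, bounding it by $C(\ld,\mu,K)$ times $\|T(\ld,\mu,\pD) - T(\wt{\ld}_\ve,\wt{\mu}_\ve,\pD)\|_{L^2(S^{d-1})}$.

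It then remains to control this far--field difference, which lives entirely on the fixed boundary. I would insert the measured far--field $T(\ld_\ve,\mu_\ve,\pDe)$ and split the difference into two pieces. The first piece, $\|T(\ld,\mu,\pD) - T(\ld_\ve,\mu_\ve,\pDe)\|_{L^2(S^{d-1})}$, is bounded by $\delta$ by the very assumption of the theorem. The second piece, $\|T(\ld_\ve,\mu_\ve,\pDe) - T(\wt{\ld}_\ve,\wt{\mu}_\ve,\pD)\|_{L^2(S^{d-1})}$, is handled by Corollary \ref{ThCont}: the key observation is that $\ld_\ve = \wt{\ld}_\ve \circ f_\ve^{-1}$ and $\mu_\ve = \wt{\mu}_\ve \circ f_\ve^{-1}$, so that this second piece is exactly $\|T(\wt{\ld}_\ve \circ f_\ve^{-1}, \wt{\mu}_\ve \circ f_\ve^{-1}, \pDe) - T(\wt{\ld}_\ve, \wt{\mu}_\ve, \pD)\|_{L^2(S^{d-1})}$, which Corollary \ref{ThCont} bounds by $C_K\|\ve\|$ since $(\wt{\ld}_\ve,\wt{\mu}_\ve) \in K$.

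Combining the three estimates yields $\|\wt{\ld}_\ve - \ld\|_{L^\infty(\pD)} + \|\wt{\mu}_\ve - \mu\|_{L^\infty(\pD)} \leq C(\ld,\mu,K)(\delta + C_K\|\ve\|)$, and absorbing $C_K$ into the constant gives the claimed bound. For the choice of $\ve_0$ I would take the minimum of the threshold furnished by Corollary \ref{ThCont} and the value $1$ ensuring that $f_\ve$ remains a $C^1$--diffeomorphism; both depend only on $K$, as required by the statement.

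I do not anticipate a genuine analytic obstacle, since the substantive work has already been carried out in Theorem \ref{ThContScat} and Corollary \ref{ThCont}. The only points requiring care are the bookkeeping of the compositions --- in particular verifying that $\wt{\ld}_\ve \circ f_\ve^{-1} = \ld_\ve$ so that the second far--field difference lines up exactly with the left--hand side of Corollary \ref{ThCont} --- and tracking the dependence of the constants, ensuring that $\ve_0$ depends only on $K$ while the multiplicative constant inherits the $(\ld,\mu,K)$--dependence of (\ref{EqStabAs}).
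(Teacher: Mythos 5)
Your proposal is correct and follows essentially the same route as the paper: the identical triangle inequality splitting $T(\ld_\ve\circ f_\ve,\mu_\ve\circ f_\ve,\pD)-T(\ld,\mu,\pD)$ into the measured discrepancy (bounded by $\delta$) and the geometric perturbation term (bounded by $C_K\|\ve\|$ via Corollary \ref{ThCont}, applicable since $(\ld_\ve\circ f_\ve,\mu_\ve\circ f_\ve)\in K$), followed by the fixed-geometry stability assumption (\ref{EqStabAs}). Your explicit verification that $\ld_\ve=(\ld_\ve\circ f_\ve)\circ f_\ve^{-1}$ lines up with the left-hand side of Corollary \ref{ThCont} is a point the paper leaves implicit, but the argument is the same.
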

\begin{proof}
 The idea of the proof is first to split the uniform continuity with respect to the obstacle and the stability with respect to the coefficients and to secondly use the stability estimate (\ref{EqStabAs}). We have
\begin{eqnarray*}
\fl  
\| T(\ld_{\ve}\circ f_{\ve},\mu_{\ve}\circ f_{\ve},\pD) - T(\ld,\mu,\pD) \|_{L^2(S^{d-1})}\leq \|& T(\ld_{\ve},\mu_{\ve},\pD_{\ve}) - T(\ld_{\ve}\circ f_{\ve},\mu_{\ve}\circ f_{\ve},\pD) \|_{L^2(S^{d-1})} \\&\quad+ \| T(\ld,\mu,\pD) - T(\ld_{\ve},\mu_{\ve},\pD_{\ve}) \|_{L^2(S^{d-1})}
\end{eqnarray*}
but the hypothesis of the Theorem tells us that
\[
  \Vert T(\ld_{\ve},\mu_{\ve},\pD_{\ve}) - T(\ld,\mu,\pD) \Vert_{L^2(S^{d-1})} \leq \delta
\]
and thanks to the continuity property of the far--field with respect to the obstacle (see Corollary \ref{ThCont}) we have
\[
 \| T(\ld_{\ve},\mu_{\ve},\pD_{\ve}) - T(\ld_{\ve}\circ f_{\ve},\mu_{\ve}\circ f_{\ve},\pD) \|_{L^2(S^{d-1})}\leq C_K \|\ve\|
\]
because  $\ld_{\ve}\circ f_{\ve}$ and $\mu_{\ve}\circ f_{\ve}$ belong to the compact set $K$. Finally
\[
  \| T(\ld_{\ve}\circ f_{\ve},\mu_{\ve}\circ f_{\ve},\pD)-T(\ld,\mu,\pD)  \|_{L^2(S^{d-1})} \leq C_K(\delta+\|\ve\|)
\]
and the stability assumption (\ref{EqStabAs}) implies
\begin{eqnarray*}
\fl  \Vert \ld_{\ve}\circ f_{\ve}- \ld \Vert_{L^{\infty}(\pD)} + \|\mu_{\ve}\circ f_{\ve} - \mu\|_{L^\infty(\pD)}
 \leq C(\ld,\mu,K)  \| T(\ld_{\ve}\circ f_{\ve},\mu_{\ve}\circ f_{\ve},\pD) - T(\ld,\mu,\pD) \|_{L^2(S^{d-1})}
\end{eqnarray*}
which concludes the proof.
\end{proof}
The local nature of this estimate only depends on the local stability result for the impedances. In \cite{BouHad} the reader can find examples of compact sets $K$ for which the stability estimate (\ref{EqStabAs}) holds locally. In the case of a classic impedance boundary condition ($\mu = 0$) global stability results of Sincich in \cite{Sin06} or  of Labreuche in \cite{Lab} can be used to obtain a constant $C(\ld,\mu,K)$ independent of $\ld$ and $\mu$. Furthermore, Theorems \ref{ThStabImpGen} and \ref{ThStabImpClas} also provide global stability results in the case where $\mu\neq0$ and $\ld$ are piecewise constant functions.
 \section{A numerical inversion algorithm and experiments}

This section is dedicated to the effective reconstruction of impedance functional coefficients $\ld_0$ and $\mu_0$ from the observed far--field $u^\infty_{obs}:= T(\ld_0,\mu_0,\pD) \in L^2(S^{d-1})$ associated to one or several given incident directions and a given obstacle (which is either exactly known or perturbed). In the simplest case of a single incident wave and an exact knowledge of the obstacle we shall minimize the cost function
\begin{equation}
\label{costfunction}
 F(\ld,\mu) = \frac{1}{2} \Vert T(\ld,\mu,\pD) - u^{\infty}_{obs} \Vert^2_{L^2(S^{d-1})}
\end{equation}
 with respect to $\ld$ and $\mu$ using a steepest descent method. To do so, we first compute the Fr\'echet derivative of $F$. 
\begin{theorem}
\label{ThDiff}
The function $F$ is differentiable for $(\ld,\mu) \in (L^{\infty}(\pD))^2$ that satisfy assumption $\mathcal{H}$ and its Fr\'echet derivative is given by
\[
 \fl \forall (h,l) \, \in (L^{\infty}(\pD))^2 \quad  dF(\ld,\mu) \cdot (h,l)  = \Re e\left< G,{\rm div}_{\pD}(l\nabla_{\pD}u)+ hu \right>_{H^{1}(\pD),H^{-1}(\pD)}
\]
where 
\begin{itemize}
\item $u$ is the solution of the problem $\mathcal{P}(\ld,\mu,\pD)$,
 \item $G=G^i+G^s$ is the  solution of $\mathcal{P}(\ld,\mu,\pD)$ with $u^i$ replaced by
\[
G^i(y) := \int_{S^{d-1}}\Phi^{\infty}(y,\hat{x}) \overline{(T(\ld,\mu,\pD) -u^{\infty}_{obs})}d\hat{x} .
\]
\end{itemize}
\end{theorem}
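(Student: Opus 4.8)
The plan is to obtain the derivative in three stages—differentiate the state, differentiate the far--field, then apply the chain rule to the squared norm—and finally to recast the resulting expression through the adjoint state $G$ by means of a reciprocity identity.

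\emph{Step 1: differentiability of the state.} Since the bilinear form $a$ in (\ref{EqVarForm}) depends linearly on $(\ld,\mu)$, the operator $A=A(\ld,\mu)$ is an affine function of $(\ld,\mu)$ whose linear part is bounded on $V_R$: the perturbation of $a$ in a direction $(h,l)\in(L^\infty(\pD))^2$ is $(u,v)\mapsto\int_{\pD}(l\,\nabla_{\pD}u\cdot\nabla_{\pD}\overline v-h\,u\overline v)\,ds$, which is controlled by $(\|h\|_\infty+\|l\|_\infty)\|u\|_{V_R}\|v\|_{V_R}$. By Proposition \ref{PropBound}, $A^{-1}$ exists and is uniformly bounded, so a Neumann series shows that $A(\ld+h,\mu+l)^{-1}$ is well defined for $(h,l)$ small and that $u=A^{-1}F$ is Fréchet differentiable. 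I would check that its derivative $u'$ solves $\mathcal{P}(\ld,\mu,\pD)$ with vanishing incident field and with the source $-[\Div_{\pD}(l\nabla_{\pD}u)+hu]$ added to the boundary condition, i.e. $a(u',v)=\langle \Div_{\pD}(l\nabla_{\pD}u)+hu,\,v\rangle$ for all $v\in V_R$, $u'$ being radiating. The far--field map $T$ is then differentiable because the representation (\ref{uinf}) depends linearly and continuously on the trace of the scattered field, so that $dT(\ld,\mu)\cdot(h,l)=u'^{\infty}$, the far--field of $u'$.

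\emph{Step 2: chain rule and reciprocity.} Writing $r:=T(\ld,\mu,\pD)-u^\infty_{obs}$, the chain rule gives $dF(\ld,\mu)\cdot(h,l)=\Re e\int_{S^{d-1}}u'^{\infty}(\hat x)\,\overline{r(\hat x)}\,d\hat x$. Inserting the representation (\ref{uinf}) of $u'^{\infty}$ on a surface $\Gamma$ enclosing $D$ and interchanging the order of integration, the inner integral over $S^{d-1}$ reproduces exactly the Herglotz function $G^i$, whence
\[
\int_{S^{d-1}}u'^\infty\,\overline r\,d\hat x=\int_{\Gamma}\bigl(u'\,\partial_{\nu}G^i-\partial_\nu u'\,G^i\bigr)\,ds.
\]
Since $G^i$ is entire and $u'$ satisfies the Helmholtz equation between $\pD$ and $\Gamma$, Green's identity transfers this integral onto $\pD$; and since $u'$ and the scattered part $G^s$ are both radiating, the boundary integral over $\pD$ carrying $G^s$ vanishes. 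Thus, with $G=G^i+G^s$, one obtains $\int_{S^{d-1}}u'^\infty\,\overline r\,d\hat x=\int_{\pD}(u'\,\partial_\nu G-\partial_\nu u'\,G)\,ds$.

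\emph{Step 3: elimination of the boundary terms.} Finally I would substitute the GIBC satisfied by $G$ and the boundary condition with source satisfied by $u'$ into this last integral. Using that the Laplace--Beltrami operator is self--adjoint, the contributions carrying $\ld$ and those carrying $\Div_{\pD}(\mu\nabla_{\pD}\cdot)$ cancel pairwise after integration by parts on $\pD$, and only the source term survives:
\[
\int_{\pD}\bigl(u'\,\partial_\nu G-\partial_\nu u'\,G\bigr)\,ds=\Re e\,\langle G,\ \Div_{\pD}(l\nabla_{\pD}u)+hu\rangle_{H^{1}(\pD),H^{-1}(\pD)},
\]
after taking real parts, which is the claimed formula (the order of the duality pairing being immaterial here). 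The main obstacle lies in Step 1 together with the cancellation in Step 3: one must justify that the formal derivative is indeed the Fréchet derivative via the remainder estimate furnished by the Neumann series, and, above all, that all second--order surface terms genuinely cancel, which hinges on the self--adjointness of $\Div_{\pD}(\mu\nabla_{\pD}\cdot)$ and on keeping the radiation conditions straight so that the exterior boundary contributions drop out.
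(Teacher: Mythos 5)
Your proof is correct, and it reaches the result with the same essential ingredients as the paper (far--field representation formula, Fubini, the vanishing of $\int_{\pD}(w_1\partial_\nu w_2 - w_2\partial_\nu w_1)\,ds$ for two radiating solutions, and integration by parts on $\pD$ using the formal self--adjointness of $\Div_{\pD}(\mu\nabla_{\pD}\cdot)$), but organized in a genuinely different order. The paper factors the argument through an intermediate lemma computing the Fr\'echet derivative of the far--field map itself: $dT(\ld,\mu)(h,l)(\hat x)=\langle p(\cdot,\hat x),\Div_{\pD}(l\nabla_{\pD}u)+hu\rangle$, where $p(\cdot,\hat x)$ is a family of adjoint states (one per observation direction, with incident field $\Phi^\infty(\cdot,\hat x)$); the reciprocity identity is applied per direction, and only afterwards does the proof of the theorem superpose over $\hat x$ via Fubini, the superposition of the $p(\cdot,\hat x)$ weighted by $\overline{T-u^\infty_{obs}}$ being precisely $G$. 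You instead apply the chain rule and Fubini first, so that the Herglotz function $G^i$ appears immediately inside the representation formula, and then perform the Green/reciprocity manipulation a single time with the single adjoint state $G$. Your route is slightly more economical (no family of adjoint states, no standalone lemma), whereas the paper's route yields the explicit formula for $dT$ as a result of independent interest, which is also what gets reused verbatim when the cost functional is modified to several incident waves and limited aperture in the numerical section. Two small points of care in your write--up: the identity at the end of Step 3 holds without real parts (the $\Re e$ enters only when you return to $dF$), and the differentiability claim in Step 1 (remainder estimate via the Neumann series) is exactly what the paper delegates to Proposition 6 of the cited reference, so it should be carried out or cited rather than only sketched.
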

\noindent To derive such theorem, we have to compute the Fr\'echet derivative of the far--field map $T$ and
hence to prove the following lemma.
\begin{lemma}
\label{LeDiff}
 The far--field map $T$ is Fr\'echet differentiable for $(\ld,\mu) \in (L^{\infty}(\pD))^2$ that satisfy assumption $\mathcal{H}$ and its Fr\'echet derivative $dT(\ld,\mu) : (L^\infty(\pD))^2 \rightarrow L^2(S^{d-1})$ maps $(h,l)$ to $v_{h,l}^\infty$ such that
\[
v_{h,l}^\infty(\hat{x}):=\left< p(.,\hat{x}),{\rm div}_{\pD}(l\nabla_{\pD}u)+hu\right>_{H^1(\pD),H^{-1}(\pD)},\quad \forall \hat{x} \in S^{d-1},\]
where $u$ is the solution of problem (\ref{PbInit}) and $p(.,\hat{x})$ is the solution of (\ref{PbInit}) in which $u^i$ is replaced by $\Phi^\infty(.,\hat{x})$.
\end{lemma}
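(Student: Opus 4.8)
The plan is to first show that the solution map $(\ld,\mu) \mapsto u \in V_R$ is Fréchet differentiable, then to transport this differentiability to the far field, and finally to recast the derivative by a reciprocity identity. The key structural observation is that the bilinear form $a$ depends on $(\ld,\mu)$ only through its boundary term, and does so \emph{linearly}. I would therefore introduce, for $(h,l) \in (L^\infty(\pD))^2$, the field $v_{h,l} \in V_R$ defined as the unique solution of $\mathcal{P}(\ld,\mu,\pD)$ whose boundary condition carries the extra source ${\rm div}_{\pD}(l\nabla_{\pD}u)+hu$. Since $u_{|\pD} \in H^1(\pD)$ and $(h,l) \in (L^\infty(\pD))^2$, this source is well defined in $H^{-1}(\pD)$ and depends linearly on $(h,l)$; Proposition \ref{PropBound} then yields existence, uniqueness, and a bound $\|v_{h,l}\|_{V_R} \le C(\|h\|_{L^\infty(\pD)}+\|l\|_{L^\infty(\pD)})\|u\|_{V_R}$, so that $(h,l) \mapsto v_{h,l}$ is linear and continuous.

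Writing $u_{h,l}$ for the solution associated with the perturbed coefficients $(\ld+h,\mu+l)$, I note that for $(h,l)$ small the operator $A_{\ld+h,\mu+l}$ is a small perturbation of $A$, hence invertible with uniformly bounded inverse by Neumann series exactly as in the proof of Theorem \ref{ThContScat}; this both guarantees that $u_{h,l}$ exists and that $\|u_{h,l}\|_{V_R}$ stays bounded. Subtracting the two weak formulations and using the linear dependence of $a$ on the coefficients shows that $u_{h,l}-u$ solves a forward problem whose source is controlled by $(\|h\|_{L^\infty(\pD)}+\|l\|_{L^\infty(\pD)})\|u_{h,l}\|_{V_R}$, whence $\|u_{h,l}-u\|_{V_R} \le C(\|h\|_{L^\infty(\pD)}+\|l\|_{L^\infty(\pD)})$. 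Setting $r := u_{h,l}-u-v_{h,l}$, the same computation shows that $r$ solves a forward problem with source ${\rm div}_{\pD}(l\nabla_{\pD}(u_{h,l}-u))+h(u_{h,l}-u)$, of $H^{-1}(\pD)$-norm $O\big((\|h\|_{L^\infty(\pD)}+\|l\|_{L^\infty(\pD)})\,\|u_{h,l}-u\|_{V_R}\big)=O\big((\|h\|_{L^\infty(\pD)}+\|l\|_{L^\infty(\pD)})^2\big)$. A last application of Proposition \ref{PropBound} gives $\|r\|_{V_R}=o(\|h\|_{L^\infty(\pD)}+\|l\|_{L^\infty(\pD)})$, so $u$ is Fréchet differentiable with derivative $v_{h,l}$. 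Since the map sending a radiating field to its far--field pattern is linear and continuous from $V_R$ into $L^2(S^{d-1})$ (by the representation (\ref{uinf}) together with trace and DtN continuity, as in Corollary \ref{ThCont}) and both $u_{h,l}-u$ and $v_{h,l}$ are radiating, $T$ is Fréchet differentiable with $dT(\ld,\mu)(h,l)=v_{h,l}^\infty$, linear and bounded in $(h,l)$.

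It then remains to identify $v_{h,l}^\infty$ with the claimed pairing, which I would do by a reciprocity argument. Starting from the representation (\ref{uinf}) for $v_{h,l}^\infty(\hat x)$ on $\partial B_R$ and observing that $\Phi^\infty(\cdot,\hat x)$ is precisely the incident field $p^i$ of $p(\cdot,\hat x)$, I would first replace $p^i$ by $p=p^i+p^s$: the difference is the Wronskian integral $\int_{\partial B_R}(v_{h,l}\,\partial_\nu p^s - p^s\,\partial_\nu v_{h,l})\,ds$, which vanishes because $v_{h,l}$ and $p^s$ are both radiating solutions of the Helmholtz equation outside $B_R$. Green's second identity applied to $v_{h,l}$ and $p$ in $\Om_R$ (both satisfy $\La w + k^2 w=0$) then turns the integral over $\partial B_R$ into the boundary integral $\int_{\pD}(v_{h,l}\,\partial_\nu p - p\,\partial_\nu v_{h,l})\,ds$. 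Substituting the homogeneous GIBC satisfied by $p$ and the inhomogeneous one satisfied by $v_{h,l}$, the $\ld$-contributions cancel pointwise, and the two second--order surface terms cancel after integrating by parts the operator ${\rm div}_{\pD}(\mu\nabla_{\pD}\cdot)$ over the closed surface $\pD$; what survives is exactly $\int_{\pD}\big({\rm div}_{\pD}(l\nabla_{\pD}u)+hu\big)\,p\,ds=\langle p(\cdot,\hat x),\,{\rm div}_{\pD}(l\nabla_{\pD}u)+hu\rangle_{H^1(\pD),H^{-1}(\pD)}$, as claimed.

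I expect this reciprocity step to be the main obstacle: beyond the bookkeeping of normal orientations, one must justify the vanishing of the radiating Wronskian and, above all, carry out the surface integration by parts at the $H^{-1}(\pD)/H^1(\pD)$ duality level, since the boundary conditions for $v_{h,l}$ and $p$ hold only weakly. By contrast, the differentiability of $u$ established in the first two steps is fairly routine once the linear dependence of $a$ on $(\ld,\mu)$ and the uniform bound of Proposition \ref{PropBound} are exploited.
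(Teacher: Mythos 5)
Your proposal is correct, and its reciprocity half coincides in substance with the paper's own computation; where you genuinely differ is on the differentiability step. The paper does not prove that step at all: it invokes Proposition 6 of \cite{BouHad} to assert that $T$ is Fr\'echet differentiable with derivative the far field of the radiating solution $v^s_{h,l}$ with boundary source $-\Div_{\pD}(l\nabla_{\pD}u)-hu$, and spends its entire proof on the identification, applying (\ref{uinf}) with $\Gamma=\pD$, substituting the boundary condition of $v^s_{h,l}$, and then eliminating the $\mu$ and $\ld$ terms via the boundary condition of $p^s:=p-\Phi^\infty(\cdot,\hat x)$ and the vanishing Wronskian $\int_{\pD}(\partial_\nu p^s\,v^s_{h,l}-p^s\,\partial_\nu v^s_{h,l})\,ds=0$. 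Your self-contained perturbation argument --- affine dependence of $a$ on $(\ld,\mu)$, Neumann series for the perturbed operator (rightly so, since $(\ld+h,\mu+l)$ need not satisfy assumption $\mathcal{H}$ and well-posedness must come from perturbation rather than coercivity), the linear bound on $u_{h,l}-u$ and the quadratic bound on $r=u_{h,l}-u-v_{h,l}$ --- supplies exactly what the citation hides, and in fact yields the stronger remainder estimate $O\big((\|h\|_{L^\infty(\pD)}+\|l\|_{L^\infty(\pD)})^2\big)$, i.e.\ Lipschitz differentiability. Your identification is also routed slightly more safely: you apply (\ref{uinf}) on $\partial B_R$ and pass to $\pD$ by Green's second identity, whereas the paper uses (\ref{uinf}) directly with $\Gamma=\pD$ even though that formula is stated for regular surfaces and $\pD$ is only assumed Lipschitz; the essential ingredients (radiating Wronskian, cancellation of the $\ld$ and $\mu$ contributions) are identical. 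Finally, the duality-level surface integration by parts that you flag as the main obstacle is in fact immediate: since $v_{h,l}|_{\pD}$ and $p(\cdot,\hat x)|_{\pD}$ belong to $H^1(\pD)$, the symmetry $\langle\Div_{\pD}(\mu\nabla_{\pD}w),z\rangle_{H^{-1}(\pD),H^{1}(\pD)}=-\int_{\pD}\mu\nabla_{\pD}w\cdot\nabla_{\pD}z\,ds=\langle\Div_{\pD}(\mu\nabla_{\pD}z),w\rangle_{H^{-1}(\pD),H^{1}(\pD)}$ holds by the very definition of $\Div_{\pD}(\mu\nabla_{\pD}\cdot)$ given in Section \ref{SecDirect}, which is also how the paper's pairings are to be read.
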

\begin{proof}
Following the proof of Proposition 6 in \cite{BouHad}, we obtain that $T$ is differentiable and that $dT_{\ld,\mu}(h,l) = v^\infty_{h,l}$ where $v^\infty_{h,l}$ is the far--field associated with $v_{h,l}^s$ solution of 
\begin{equation}
\fl
\cases{
\La v_{h,l}^s + k^2v_{h,l}^s = 0  \quad \textrm{in} \; \Om\\
\Div _{\pD} (\mu \nabla_{\pD} v_{h,l}^s) +\frac{\partial v_{h,l}^s}{\partial \nu} + \ld v_{h,l}^s =   -\Div _{\pD} (l \nabla_{\pD} u) -h u \quad \textrm{on} \; \pD\\
 \displaystyle \lim\limits_{R \to \infty} \int_{\partial B_R}|\partial v^s_{h,l}/\partial r -ikv^s_{h,l}|^2ds =0 \;.}
\end{equation}
From (\ref{uinf}), we have for all $\hat{x} \in S^{d-1}$
\begin{equation}
 \label{EqRepInf}
 v_{h,l}^\infty(\hat{x})=\int_{\pD}\left(v_{h,l}^s\D{\Phi^\infty(.,\hat{x})}{\nu}-\D{v_{h,l}^s}{\nu}\Phi^\infty(.,\hat{x})\right) ds.
\end{equation}
Since on $\pD$ 
\[
\D{v^s_{h,l}}{\nu}=-\Div_{\pD}(\mu \nabla_{\pD}v^s_{h,l}) -\ld v^s_{h,l}-\Div _{\pD} (l \nabla_{\pD} u) -h u,
\]
we obtain with integration by parts
\begin{eqnarray}
\label{eqvinf}
\fl
  v_{h,l}^\infty(\hat{x})=&\left<v_{h,l}^s,\Div_{\pD}(\mu \nabla_{\pD}\Phi^\infty(.,\hat{x})) + \D{\Phi^\infty(.,\hat{x})}{\nu}+ \ld\Phi^\infty(.,\hat{x}) \right>_{H^{1}(\pD),H^{-1}(\pD)} \nonumber\\
&+ \left< \Phi^\infty(.,\hat{x}), \Div_{\pD}(l \nabla_{\pD}u) + hu \right>_{H^{1}(\pD),H^{-1}(\pD)} \,.
\end{eqnarray}
We introduce the solution $p(\cdot,\hat{x})$ of (\ref{PbInit}) with $u^i = \Phi^{\infty}(\cdot,\hat{x})$. 
The associated scattered field
$p^s(\cdot,\hat{x}) := p(\cdot,\hat{x}) - \Phi^{\infty}(\cdot,\hat{x})$ satisfies on $\pD$
\[
 \Div _{\pD} (\mu \nabla_{\pD} p^s) +\frac{\partial p^s}{\partial \nu} + \ld p^s = -\Div_{\pD}(\mu \nabla_{\pD}\Phi^\infty) - \D{\Phi^\infty}{\nu}- \ld\Phi^\infty \,.
\]
Since $v^s_{h,l}$ and $p^s$ are radiating solutions of a scattering problem the following identity holds:
\begin{equation*}
 \int_{\pD}\left(\D{p^s}{\nu}v^s_{h,l} - p^s \D{v^s_{h,l}}{\nu}\right)ds = 0.
\end{equation*}
Using the boundary condition for $p^s$ and $v_{h,l}^s$, equation (\ref{eqvinf}) becomes 
\begin{eqnarray*}
  v_{h,l}^\infty(\hat{x})&=&-\left<p^s(\cdot,\hat{x}),\Div_{\pD}(\mu \nabla_{\pD}v^s_{h,l}) + \D{v^s_{h,l}}{\nu}+ \ld v^s_{h,l} \right>_{H^{1}(\pD),H^{-1}(\pD)} \\
& &+ \left< \Phi^\infty(.,\hat{x}), \Div_{\pD}(l \nabla_{\pD}u)+ hu \right>_{H^{1}(\pD),H^{-1}(\pD)} \\
& =& \left<p(.,\hat{x}), \Div_{\pD}(l \nabla_{\pD}u) +hu \right>_{H^{1}(\pD),H^{-1}(\pD)} \,
\end{eqnarray*}
which completes the proof.
\end{proof}
We are now in a position to prove Theorem \ref{ThDiff}.
\begin{proof}[Proof of Theorem \ref{ThDiff}]
By composition of derivatives and using the Fubini theorem we have for $(h,l) \, \in (L^{\infty}(\pD))^2$
\begin{eqnarray*}
  dF(\ld,\mu)\cdot(h,l) &= \Re e\left\{(T(\ld,\mu,\partial D)-u^{\infty}_{obs}, dT(\ld,\mu)\cdot (h,l))_{L^2(S^{d-1})}\right\} \\
&= \Re e\int_{S^{d-1}}\left\{\overline{ (T(\ld,\mu,\partial D)-u^{\infty}_{obs})}(\hat{x})\left< p(y,\hat{x}),A(u)(y)\right>_{H^1(\pD),H^{-1}(\pD)}\right\} d\hat{x} \\
&= \Re e \left< G , A(u)\right>_{H^1(\pD),H^{-1}(\pD)}
\end{eqnarray*}
with 
\[
 A(u)(y) =  \Div_{\pD}(l(y)\nabla_{\pD}u(y)) + h(y)u(y).
\] 
\end{proof}
\subsection{Numerical algorithm}
To minimize the cost function (\ref{costfunction}) we use a steepest descent method and we compute the gradient of $F$ with the help of  Theorem \ref{ThDiff}. We solve the direct problems using a finite element method (implemented with FreeFem++ \cite{FF++}) applied to (\ref{PbBorne}). We look for the imaginary part of a function $\ld$  with $\Im m(\ld) \geq 0$ and the real part of  a function $\mu $ with $\Re e(\mu) (x)\geq c> 0$ for almost every $x \in \pD$ assuming that $\Re e(\ld)$ and $\Im m(\mu)$ are known in order to satisfy hypothesis presented in \cite{BouHad} for which uniqueness and local stability hold. Moreover, for sake of simplicity we will choose these known parts of the impedances equal to zero. Let us give initial values $\ld_{init}$ and $\mu_{init}$ in the same finite element space as the one used to solve the forward problem. We update these values at each time step $n$ as follows
\[
 \ld_{n+1} = \ld_n -i\delta\ld_n
\]
where the  descent direction $\delta\ld_n$ is taken proportional to $dF(\ld_n,\mu_n)$. Since the number of parameters for $\ld_n$ is in general high, a regularization of $dF(\ld_n,\mu_n)$ is needed. We choose to use a $H^1(\pD)$--regularization  (see \cite{All} for a similar regularization procedure) by taking $\delta\ld_n \in H^1(\pD)$ solution to
\begin{equation}
\label{EqGrad}
 \fl \qquad \eta_1 (\nabla_{\pD} (\delta \ld_n), \nabla_{\pD} \vp)_{L^2(\pD)} + ( \delta \ld_n, \vp)_{L^2(\pD)} = \alpha_1 dF(\ld_n,\mu_n)\cdot(i\vp,0) 
\end{equation}
for each $\vp$ in the finite element space and with $\eta_1$ the regularization parameter  and $\alpha_1$  the descent coefficient for $\ld$.  We apply a similar procedure for $\mu$,
\[
\mu_{n+1} = \mu_{n} - \delta \mu_n
 \]
 where $\delta \mu_n$  solves
\[
\fl \qquad \eta_2 (\nabla_{\pD} (\delta \mu_n), \nabla_{\pD} \vp)_{L^2(\pD)} + ( \delta \mu_n, \vp)_{L^2(\pD)} = \alpha_2 dF(\ld_n,\mu_n)\cdot(0,\vp) .
\]
We take two different regularization parameters for $\ld$ and $\mu$ since we observed that the algorithm has different sensitivities with respect  to each coefficient. From the practical point of view we choose large $\eta_i$ at the first steps to quickly approximate the searched $\ld$ and $\mu$ then we decrease them during the algorithm in order to increase the precision of the reconstruction. In all the computations (except for  constant $\ld$ and $\mu$), the parameters $\alpha_1$ and $\alpha_2$ are chosen in such a way that the cost function decreases at each step. Finally, we update alternatively $\ld$ and $\mu$ because the cost function is much more sensitive to $\ld$ than to $\mu$ and as a consequence, if we update both at each time step, we would have a poor reconstruction of $\mu$. Concerning the stopping criterion, we stop the algorithm if the descent coefficients $\alpha_1$ and $\alpha_2$ are too small or if the number of iterations is larger than $100$ (in every cases there was not a significant improvement of the reconstruction after $80$ iterations). 

\subsection{Numerical experiments}
In this section we will show some numerical reconstructions using synthetic data generated with the code \textit{FreeFem++}  in two dimensions to illustrate the behaviour of our numerical method. First of all, we will see that the use of a single incident wave is not satisfactory and we will quickly turn to the use of several incident waves. Then all the simulations will be done with several incident waves and with limited aperture data. Remark that all the theoretical results still hold in this particular case (see remark \ref{ReLimited}). In all the simulations the obstacle is an ellipse of semi-axis $0.4$ and $0.3$, its diameter is hence more or less equal to the wavelength $2\pi/k$ when $k=9$. \\
Moreover, since we consider a modelling of physical properties we rescale the equation on the boundary of the obstacle $\pD$ in order to deal with dimensionless coefficients $\ld$ and $\mu$. The equation on $\pD$ becomes
\[
 \Div_{\pD}\left(\frac{\mu}{k}\nabla_{\pD}u \right) + \D{u}{\nu} + k\ld u=0.
\]
In all cases (except when we specify it), we simply reconstruct $\mu$ taking $\ld=0$ because the reconstruction of $\ld$ has been investigated for a long time (see \cite{CaCoMo} or more recently \cite{CaCoMo10}). Finally, as we consider star--shaped obstacles we can define the impedance functions as functions of the angle $\theta$. In the following we will represent $\ld$ and $\mu$ with the help of such a parametrization. Other experiments have been carried out and can be found in \cite{BoChHa10}.

\subsubsection{A single incident wave with full aperture}
In this section we consider the exact framework of the theory developed at the beginning,
\begin{figure}[h]
\begin{minipage}{\textwidth}
 \includegraphics[width =.49\textwidth]{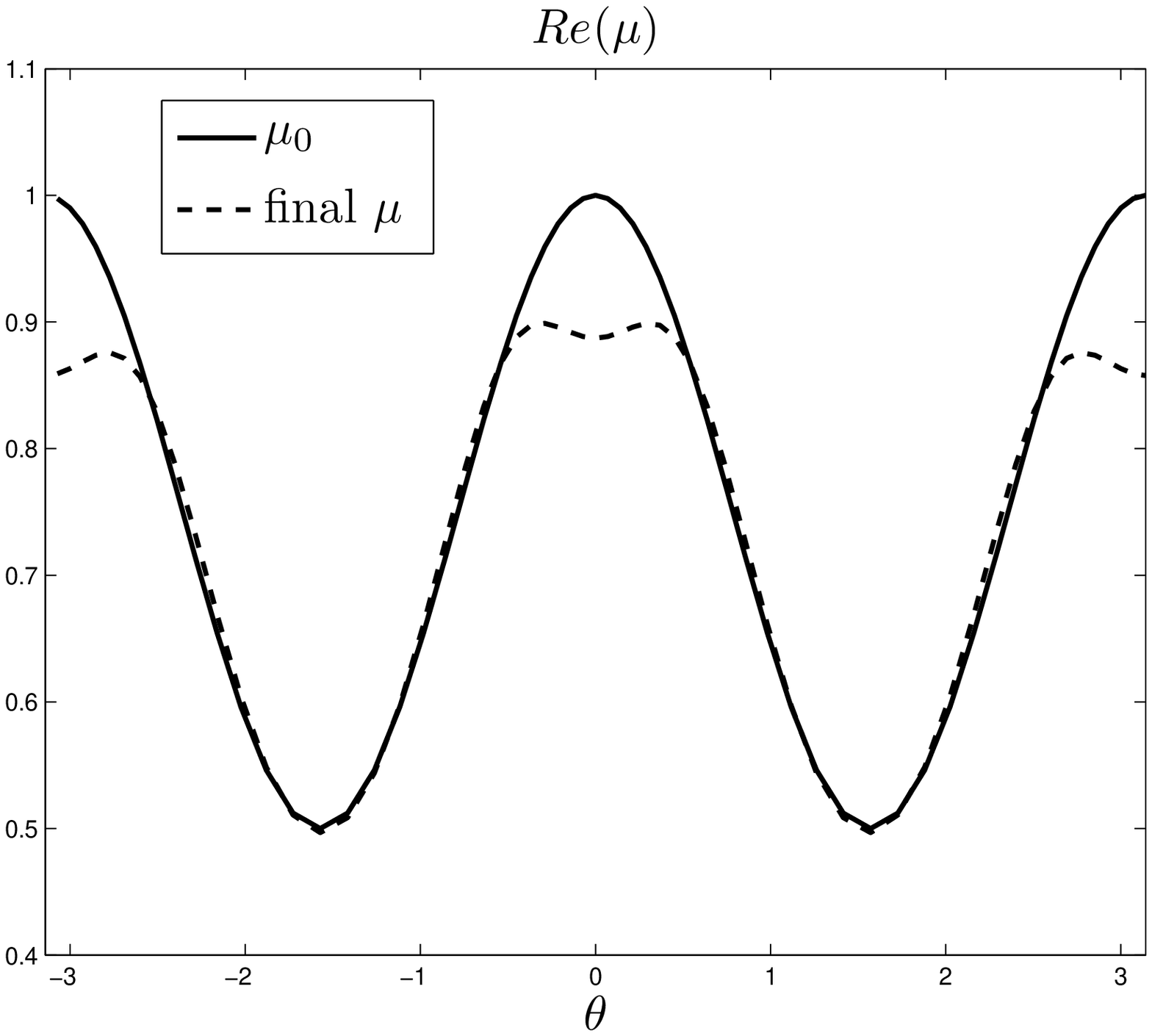}
 \includegraphics[width =.49\textwidth]{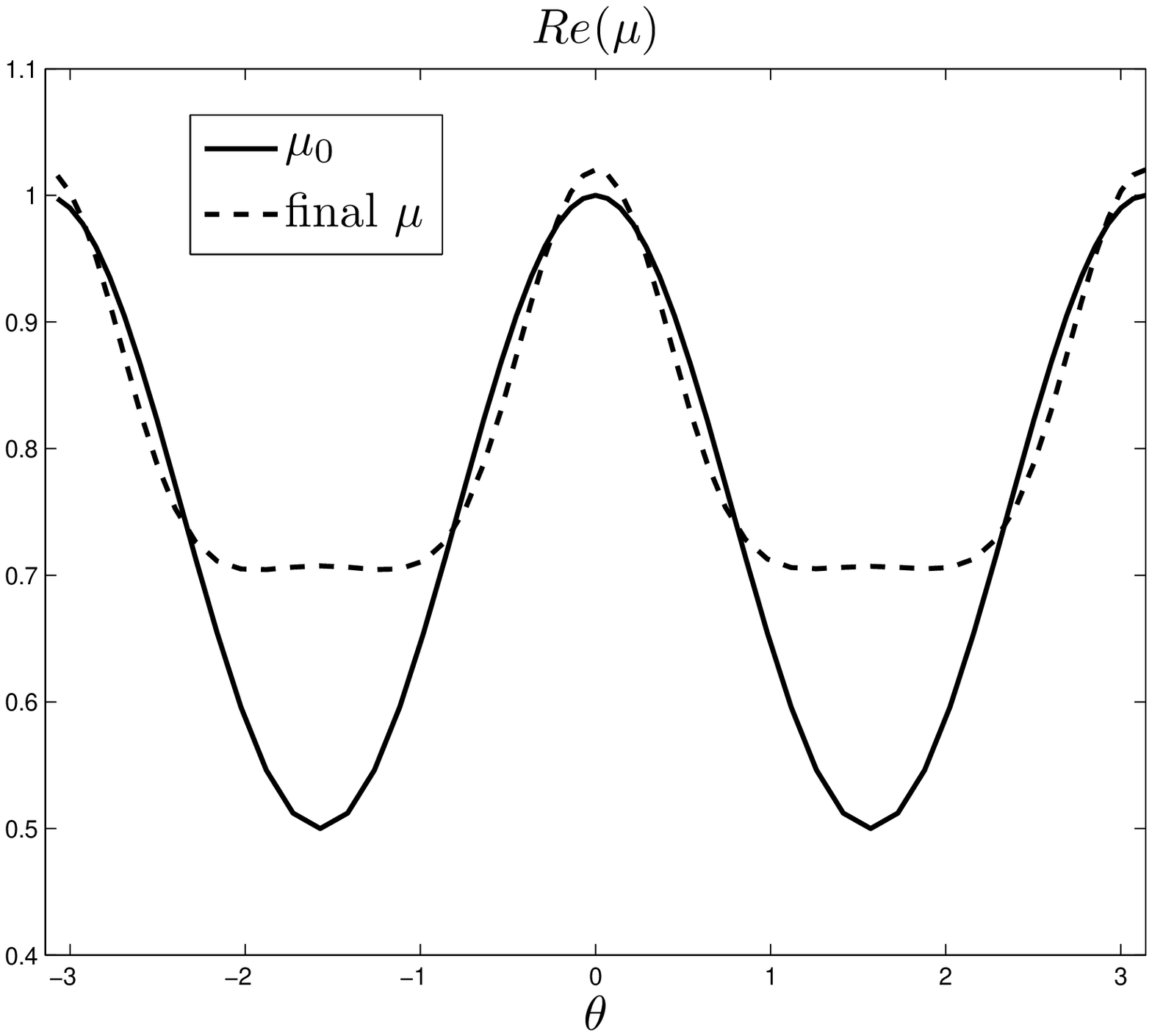}
\caption{Reconstruction of $\Re e(\mu_0)=0.5(1+\cos^2(\theta))$, $\ld = 0$, $\mu_{init} = 0.7$, wave number $k = 9$, the incident angle is $0$ on the left and $\pi/2$ on the right. }
\label{FigUnique}
\end{minipage}
\end{figure}
 namely we enlighten the obstacle with a single incident plane wave and we measure the far--field in all directions. As we expected, the reconstruction is quite good in the enlightened area but rather poor far away from such area (see Figure \ref{FigUnique}) that's why we will consider a several incident waves framework.

\subsubsection{Several incident waves and limited aperture}
From now on we suppose that we measure several far--fields corresponding to several incident directions. We hence reproduce an experimental device that would rotate around the obstacle $D$. To be more specific, we denote $u^s(\cdot,d)$ the scattered field associated with the incident plane wave of direction $d$. Considering we have $N$ incident directions $d_j$ and  $N$ areas of observation $S_j \subset S^1$ such that the angle associated to $S_j$ is $2\pi/N$, we construct a new cost function
\begin{eqnarray*}
   F(\ld,\mu) &=&\frac{1}{2} \sum_{j=1}^N \Vert T(\ld,\mu,\pD,d_j) - u^{\infty}_{obs}(\cdot,d_j)\Vert^2_{L^2(S_j)} \,.
\end{eqnarray*}
To minimize $F$ we use the same technique as before but now the Herglotz incident wave $G^i$ is associated with the incident direction $d_j$ and is given by
\[
 G^i(y,d_j) =  \int_{S_j}\Phi^{\infty}(y,\hat{x}) \overline{(T(\ld,\mu,\pD,d_j) -u^{\infty}_{obs}(\cdot,d_j))}d\hat{x}.
\]
More precisely, for the next experiments we send $N=10$ incident waves uniformly distributed on the unit circle and hence the $S_j$ are portions of the unit circle of aperture $\pi/5$. In order to evaluate the convergence of the algorithm, we introduce the following relative cost function:
\[
\mbox{Error }:=\sqrt{\frac{\sum_{j=1}^N \Vert T(\ld,\mu,d_j) - u^{\infty}_{obs}(\cdot,d_j)\Vert^2_{L^2(S_j)} }{\sum_{j=1}^N \Vert u^{\infty}_{obs}(\cdot,d_j)\Vert^2_{L^2(S_j)}}} \,.
\]
Moreover, we add some noise on the data to avoid "inverse crime". Precisely we handle some noisy data $u^{\infty}_{\sigma}(\cdot,d_j)$ such that
\[
 \frac{\|u^{\infty}_{\sigma}(\cdot,d_j)-u^{\infty}_{obs}(\cdot,d_j)\Vert_{L^2(S_j)}}{  \Vert u^{\infty}_{obs}(\cdot,d_j)\Vert_{L^2(S_j)}}= \sigma.
\]
 In the next experiments we study the impact of the level of noise ($1\%$ and $5\%$) on the quality of the reconstruction. $\mbox{Error}(\sigma)$ will denote the final error with amplitude of noise $\sigma$.

\paragraph*{Influence of the wavelength and the regularization parameter.}
First of all we are interested in the influence of the wavelength on the accuracy of the results, the first two graphics on Figure \ref{FigFreq} show how the algorithm behaves with respect to the wavelength. We can see that if we decrease the wavelength (Figure \ref{FigFreq24}) the reconstructed impedance is very irregular, that's why on Figure \ref{FigFreqReg} we add some regularization to flatten the solution,  and then improve the reconstruction compared to Figure \ref{FigFreq24}.
\begin{figure}[h!]
\begin{minipage}{\textwidth}
\centering
 \subfigure[$ \mbox{Error }(1\%) =2 \%\,,\; \mbox{Error }(5\%) =11 \%$]{\includegraphics[width =.49\textwidth]{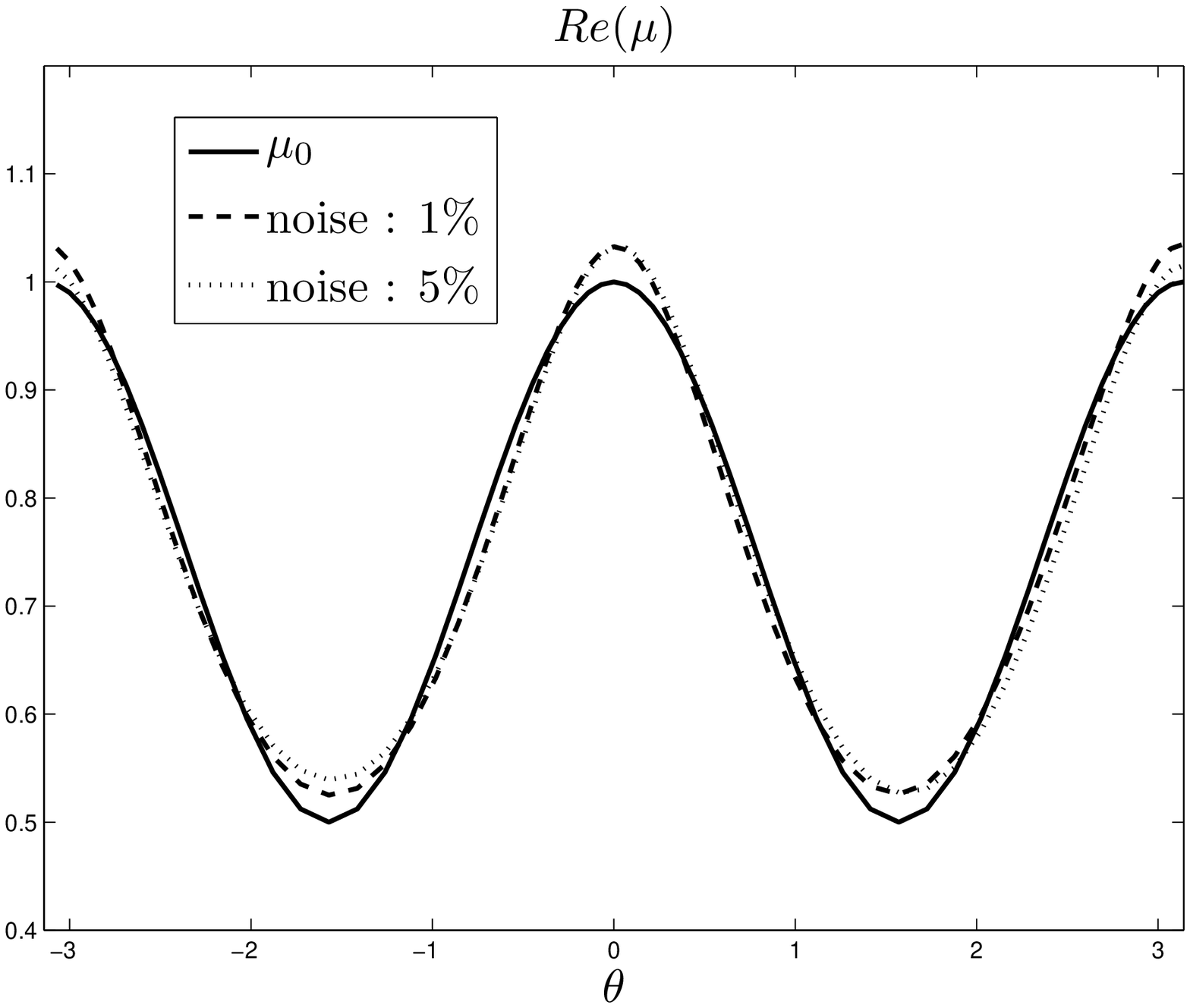}}
 \subfigure[$ \mbox{Error }(1\%) =2.8 \%\,,\; \mbox{Error }(5\%) =11.5 \%$]{\label{FigFreq24} \includegraphics[width =.49\textwidth]{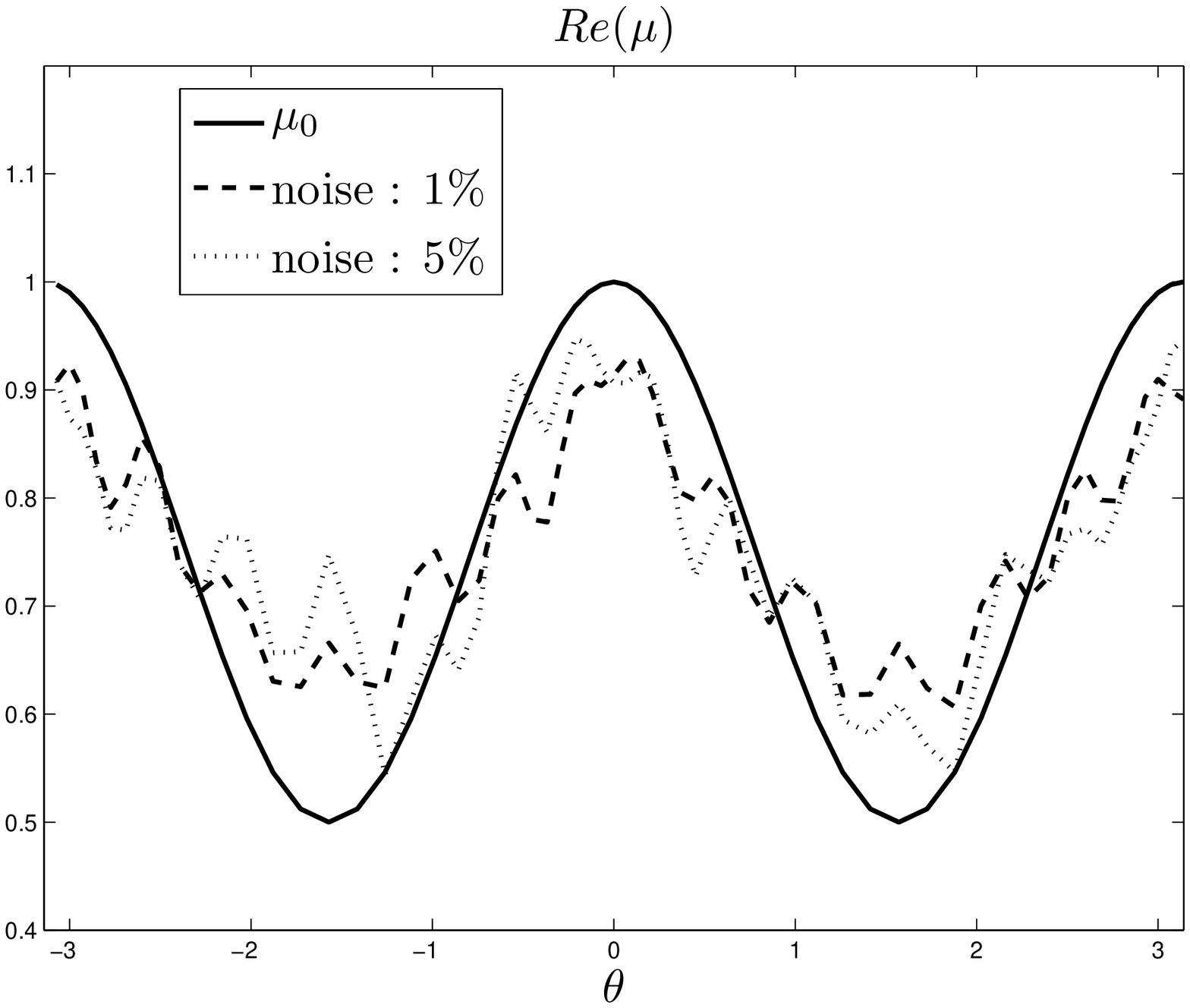}}
\subfigure[$ \mbox{Error }(1\%) =2.1 \%\,,\; \mbox{Error }(5\%) =10.2 \%$]{\label{FigFreqReg} \includegraphics[width =.49\textwidth]{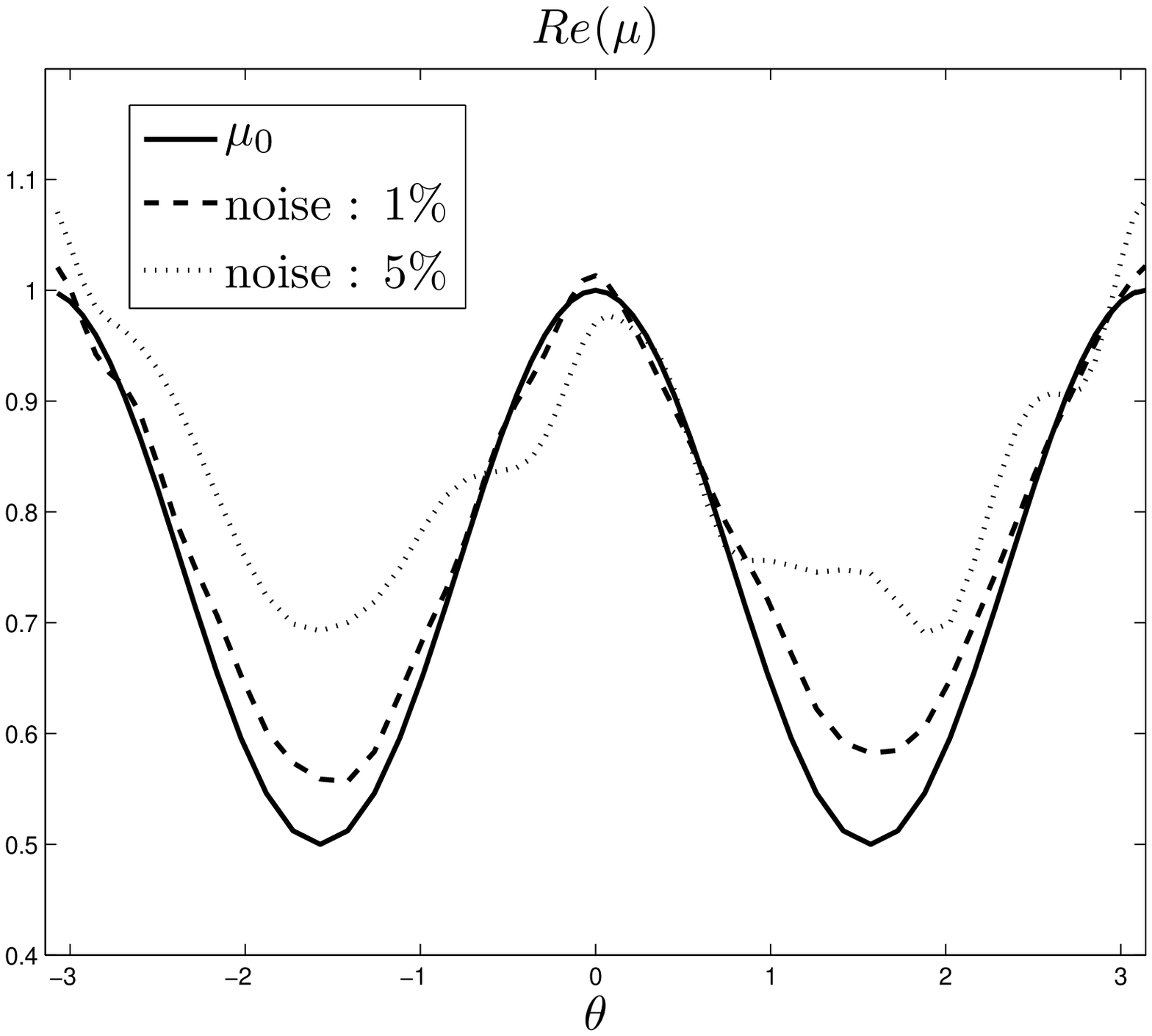}}
\caption{Reconstruction of $\Re e(\mu_0)=0.5(1+\cos^2(\theta))$, $\mu_{init} = 0.7$, $\ld = 0$, with no regularization procedure, wave number $k=2$ (top left) and $k=24$ (top right); with a regularization procedure and $k=24$ (bottom). }
\label{FigFreq}
\end{minipage}
\end{figure}

\paragraph*{The case of non--smooth functional coefficients.}
We are able to handle a non--smooth coefficient $\mu$ since $\mu$ is expressed as a linear combination of functions of the finite element space. 
\begin{figure}[h] 
\begin{minipage}{\textwidth}
  \subfigure[$ \mbox{Error }(1\%) =2.9 \%\,,\; \mbox{Error }(5\%) =11.4 \%$]{\includegraphics[width =.49\textwidth]{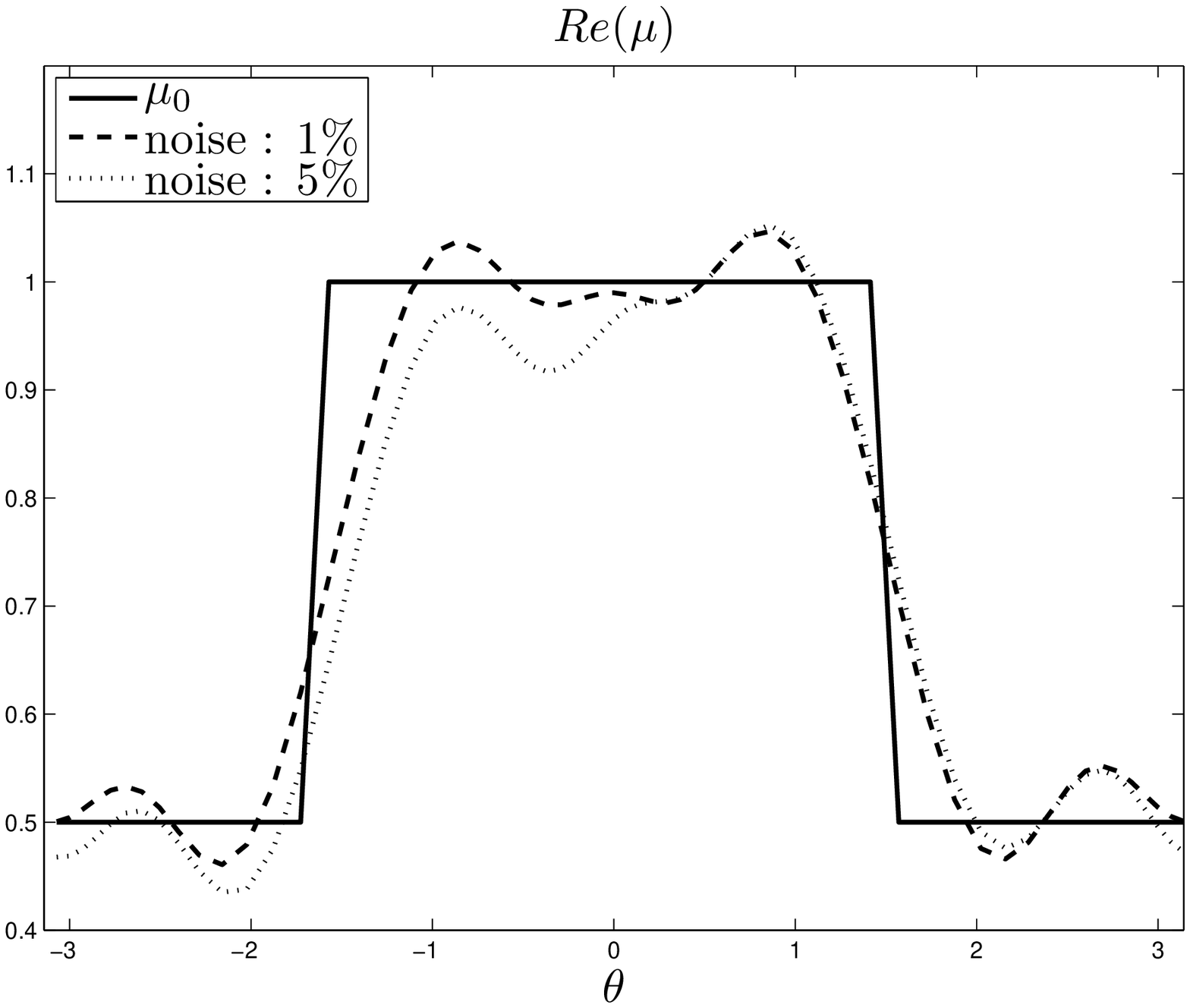} }
  \subfigure[$ \mbox{Error }(1\%) =2.3 \%\,,\; \mbox{Error }(5\%) =11 \%$]{\includegraphics[width =.49\textwidth]{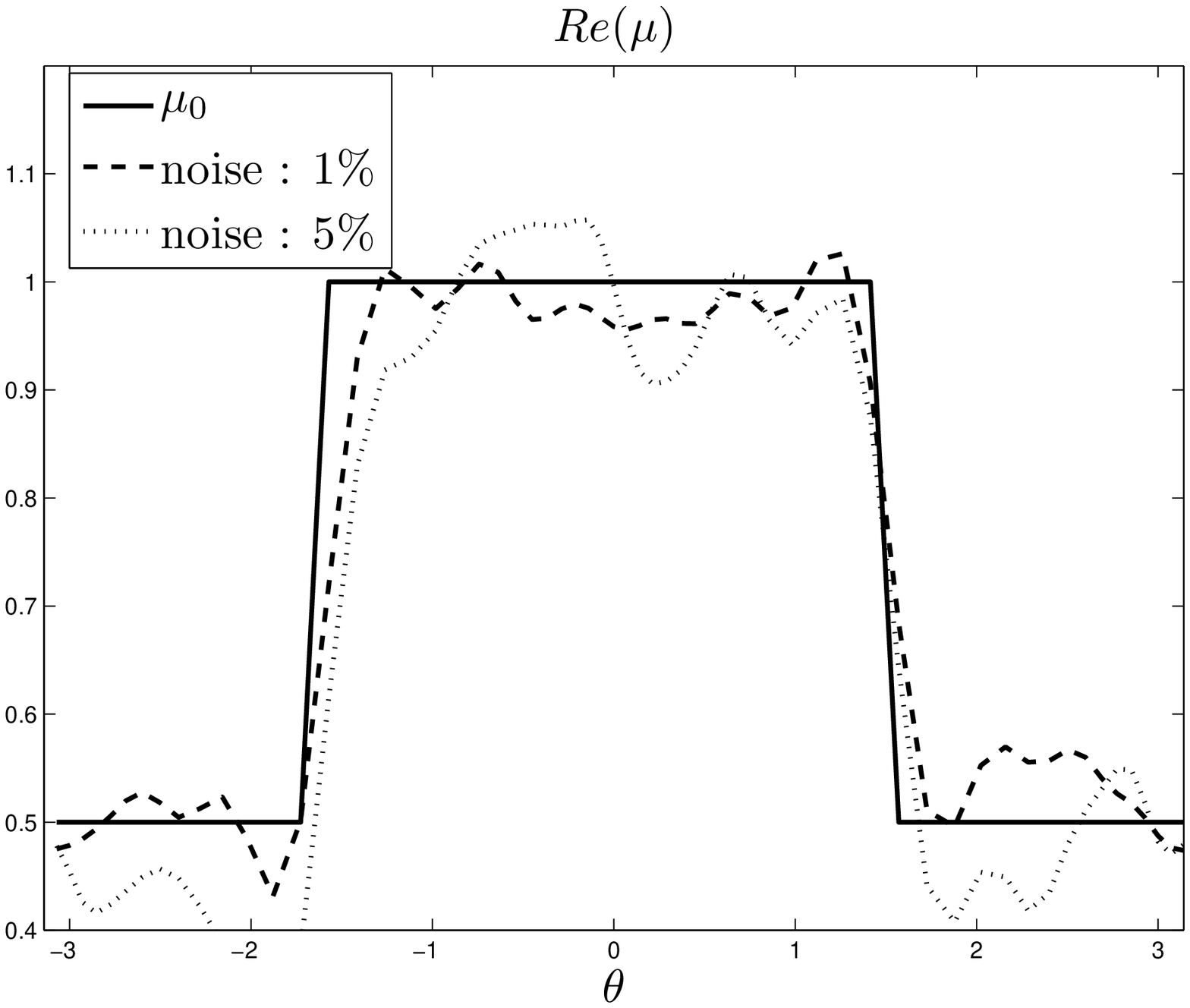}\label{FigConstb}}
\caption{Reconstruction of $\Re e(\mu_0)=0.5 +0.5\chi_{[-\pi/2,\pi/2]}$, $\mu_{init} = 0.7$, $\ld = 0$, the wave number is $k=9$ on the left and $k=24$ on the right.}
\label{FigConst}
\end{minipage}
\end{figure}
 We present our results on Figure \ref{FigConst} for piecewise constant functions $\mu$. To have a good reconstruction of a piecewise constant function, we need a small wavelength. However, we have  just seen before that too small wavelength generates instability due to the noise that contaminates data. That's why we use a two steps procedure. First we use  a large wavelength equal to $0.7$ ($k=9$, on the left) to quickly find a good approximation of the coefficient. Secondly, to improve the result, we use a three times smaller  wavelength  ($k=24$ on the right).  Hence, we combine the advantage of a large wavelength (low numerical cost) and the advantage of a small wavelength (good precision on the reconstruction of the discontinuity).

\paragraph*{Simultaneous search for $\ld$ and $\mu$.}
We now study the simultaneous reconstruction of $\ld$ and $\mu$.
\begin{table}[h]
\centering
\begin{tabular}{|c|c|c|c|c|c|c|}
\hline $\ld_{0}$ & $\mu_{0}$ & $\ld_{init}$ & $\mu_{init}$ & Reconstructed $\ld$ & Reconstructed $\mu$ & Error on the far--field\\ 
\hline  $i$ & $1$  & $0.5i$ &  $0.5$& $i$ & $0.97$ & $0.05\%$ \\ 
\hline  $i$ & $0.2$ & $0.5i$ &  $0.1$& $0.99i$ & $0.21$  & $0.08\%$ \\ 
\hline  $i$ &   $5$ & $0.5i$ &  $2.5$ & $0.97i$ &$5.12$ &$0.07\%$ \\ 
\hline 
\end{tabular}
\caption{Simultaneous reconstruction of constant $\ld$ and $\mu$ with the wave number $k=9$; $\sigma = 1\%$ of noise.}
\label{TabConst}
\end{table} 
Table \ref{TabConst} represents the simultaneous reconstruction of  constant $\ld$ and $\mu$ and we observe a good reconstruction in the constant case. 
\begin{figure}[h]
\begin{minipage}{\textwidth}
 \subfigure{\includegraphics[width =.49\textwidth]{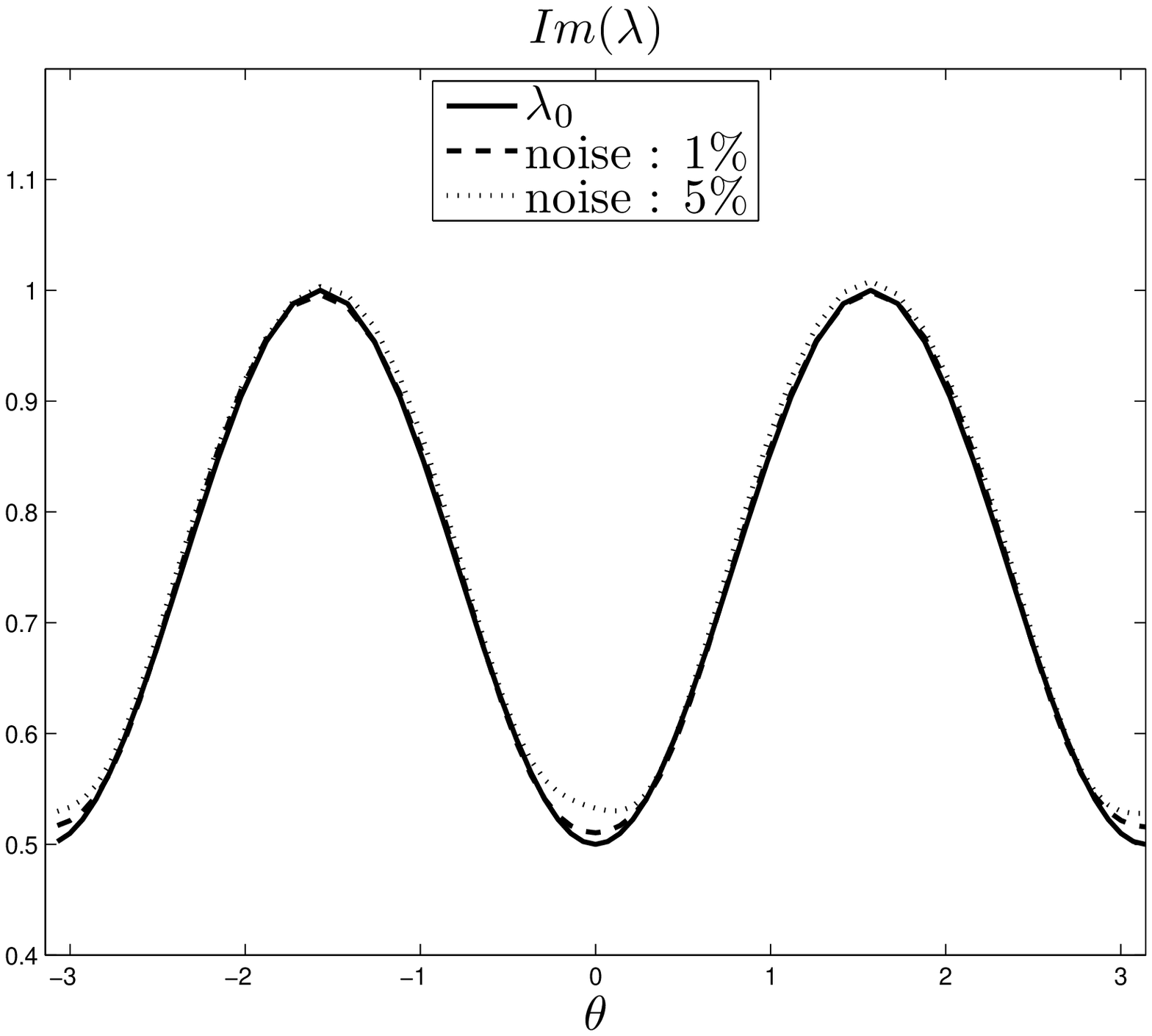}}
\subfigure{ \includegraphics[width =.49\textwidth]{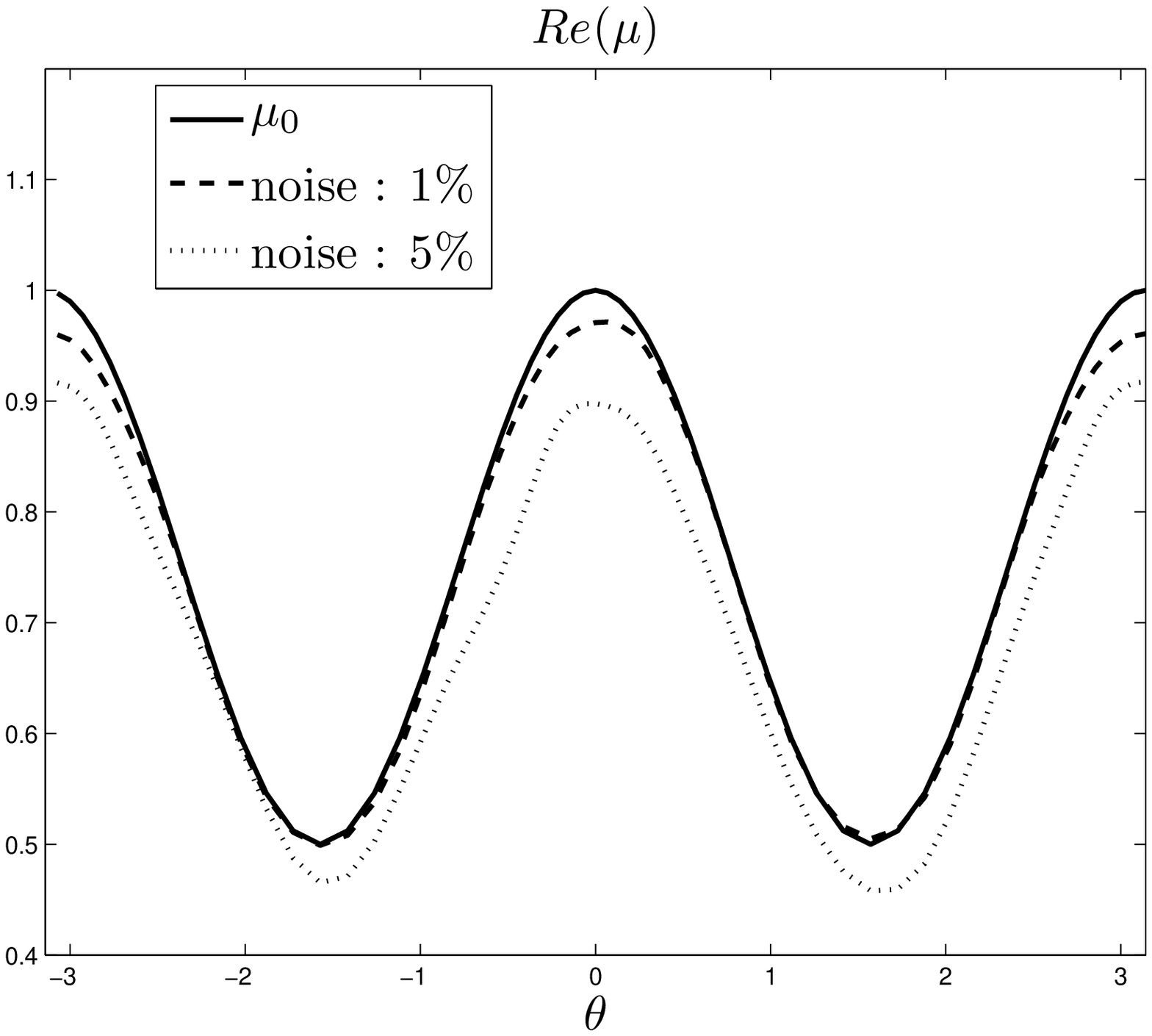}}
\caption{Wave number $k=9$, limited aperture data with $10$ incident waves, $ \mbox{Error }(1\%) =2.7 \%\,,\; \mbox{Error }(5\%) =9.8 \%$ reconstruction of $\Im m(\ld_0)=0.5(1+\sin^2(\theta))$, $\ld_{init} = 0.7i$ (on the left) and $\Re e(\mu_0)=0.5(1+\cos^2(\theta))$, $\mu_{init} = 0.7$ (on the right). }
\label{FigSim1}
\end{minipage}
\end{figure}
On Figure \ref{FigSim1} we show the reconstruction of functional impedance coefficients $\ld$ and $\mu$. The reconstruction is quite good for $1\%$ of noise and remains acceptable for $5\%$ of noise. 
\paragraph*{Stability with respect to a perturbed geometry.}
To illustrate the stability result with respect to the obstacle stated in Theorem \ref{ThStabObs} we construct numerically $u^{\infty}_{obs}(\cdot,d_j) = T(\ld_0,\mu_0,\pD,d_j)$ for a given obstacle $D$ and we minimize the ''perturbed'' cost function
\[
 F_{\ve}(\ld,\mu) =\frac{1}{2}\sum_{j=0}^N \Vert T(\ld,\mu,\pDe,d_j) - u^{\infty}_{obs}(\cdot,d_j) \Vert^2_{L^2(S_j)}
\]
for a perturbed obstacle $D_{\ve}$ in order to retrieve $(\ld_0,\mu_0)$ on $\pDe$.  
We first consider a perturbed obstacle $D_{\ve}$ which is our previous ellipse of semi-axis 0.4 and 0.3, the exact obstacle $D$ being
such ellipse once perturbed with an oscillation of amplitude $\gamma$. More precisely, the obstacle $D$ is parametrized by
\[x(t)=0.4 \left(\cos t + \gamma \cos(20 t)\right),\quad y(t)=0.3\left(\sin t + \gamma \sin(20 t)\right),\quad t \in [0,2\pi].\] 
In the following experiments we want to evaluate the impact of $\gamma$ on the reconstruction of the coefficients. 
The corresponding results are represented on Figure \ref{FigPertSim0} for $\ld=0$ and piecewise constant $\mu$, for two amplitudes $\gamma$ of perturbation.
\begin{figure}[h]
\begin{minipage}{\textwidth}
\subfigure[$ \mbox{Error }(1\%) =3.2 \%\,,\; \mbox{Error }(5\%) =10.8 \%$]{ \includegraphics[width =.49\textwidth]{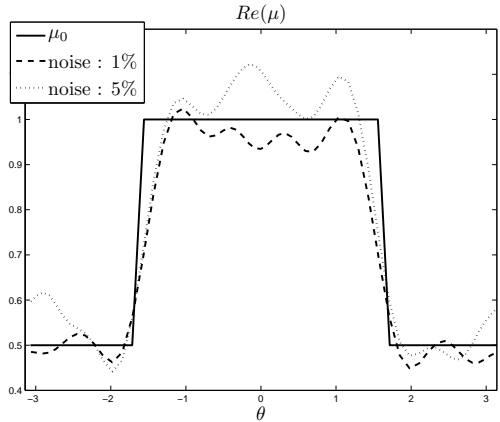}}
 \subfigure[$ \mbox{Error }(1\%) =11 \%\,,\; \mbox{Error }(5\%) =15 \%$]{\includegraphics[width =.49\textwidth]{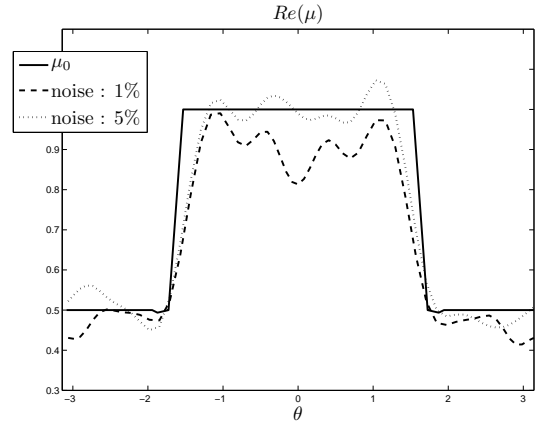}}
\caption{Perturbed ellipse, wave number $k=9$, $\mu_{init} = 0.7$, $\ld =0$, $\gamma=1\%$ on the left and $3\%$ on the right. }
\label{FigPertSim0}
\end{minipage}
\end{figure}

We now consider a second kind of perturbed obstacle, as indicated on Figure \ref{FigGeoPert}.
The perturbation of the obstacle is again denoted $\gamma$ and defined by 
\[
\gamma:=\frac{\ve_0}{\mbox{diam}(D)}. 
\]
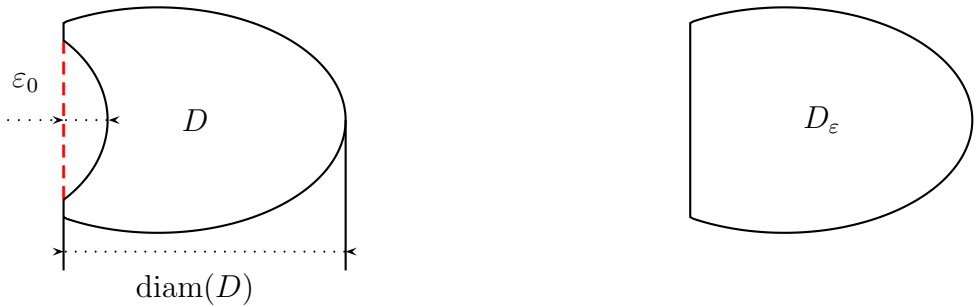
\begin{figure}[h!]
\begin{minipage}{.5\textwidth}
\centering
\begin{pspicture}(-2.5 ,-2.5)(5 ,2)
\psset{xunit=5cm} 
\psset{yunit=5cm} 
\rput(-0.35,0.1){\large $\ve_0$}
\rput(0.1,0){\large $D$}
\psline[linecolor=red,linestyle=dashed,linewidth=1pt](-0.25,-0.21)(-0.25,0.21)
\psline[linestyle=dotted]{>-<}(-0.28,0)(-0.1,0)
\psline[linestyle=dotted](-0.28,0)(-0.4,0)
\psline(-0.25,-0.26)(-0.25,-0.4)
\psline(0.5,0)(0.5,-0.4)
\psline[linestyle=dotted]{>-<}(-0.28,-0.35)(0.53,-0.35)
\rput(0.1,-0.45){\large diam($D$)}
\fileplot{Images/geo.dat}
\end{pspicture}
\end{minipage}
\begin{minipage}{.5\textwidth}
\centering
\begin{pspicture}(-2.5 ,-2.5)(5 ,2)
\psset{xunit=5cm} 
\psset{yunit=5cm} 
\rput(0.1,0){\large $D_{\ve}$}
\fileplot{Images/geopert.dat}
\end{pspicture}
\end{minipage}
\caption{Exact (on the left) and perturbed (on the right) geometries.}
\label{FigGeoPert}
\end{figure}
Note that the perturbed obstacle is the convex hull of the non--convex obstacle $D$. To satisfy the assumptions of Theorem \ref{ThStabObs} we have to check that we can find some $\ld_{\ve}$ and $\mu_{\ve}$ such that
\[
F_{\ve}(\ld_{\ve},\mu_{\ve}) \leq \delta \,
\]
for a small $\delta$. If we take the same uniformly distributed incident directions with $N=10$ and  $k=9$ as before (the wavelength is more or less equal to the diameter of $D$), we have 
\[
\sqrt{\frac{F_{\ve}(\ld_0,\mu_0)}{ \sum_{j=0}^N \Vert u^{\infty}_{obs}(\cdot,d_j) \Vert^2_{L^2(S_j)}}} = 
\left\{
\begin{array}{ll}
8\% &\quad \mbox{if  } \gamma=1\% \\
27\% &\quad \mbox{if  } \gamma=3\%. \\
\end{array}
\right.
\]
 These levels of perturbation on the cost function are too high to hope a good reconstruction of the coefficients. It is reasonable to consider we do not enlighten the obstacle in the direction of the non--convexity (since we have poor knowledge of such area). Let us suppose for example that we still have $10$ incident waves but now the incident directions belong to $[-\pi/2,\pi/2] $. We have the following relative errors with the actual impedances
\[
\sqrt{\frac{F_{\ve}(\ld_0,\mu_0)}{ \sum_{j=0}^N \Vert u^{\infty}_{obs}(\cdot,d_j) \Vert^2_{L^2(S_j)}}} = 
\left\{
\begin{array}{ll}
3\% &\quad \mbox{if  } \gamma=1\% \\
9\% &\quad \mbox{if  } \gamma=3\%. \\
\end{array}
\right.
\]
In this case we hope a good reconstruction of the impedance coefficients at least in the directions of incidence. 
The corresponding results are represented for $\ld=0$ and a smooth function $\mu$ on Figure \ref{FigPertSim}, for $\ld=0$ and piecewise constant $\mu$ on Figure \ref{FigPertSim_bis}, for two amplitudes $\gamma$ of perturbation.
We can see that reconstruction is good for $\gamma=1\%$ even if we put $\sigma=5\%$ of noise on the measurements. For $\gamma=3\%$  the reconstruction remains quite good in the non--perturbed area and acceptable in the perturbed area.
\begin{figure}[h]
\begin{minipage}{\textwidth}
\subfigure[$ \mbox{Error }(1\%) =2.8 \%\,,\; \mbox{Error }(5\%) =11 \%$]{ \includegraphics[width =.49\textwidth]{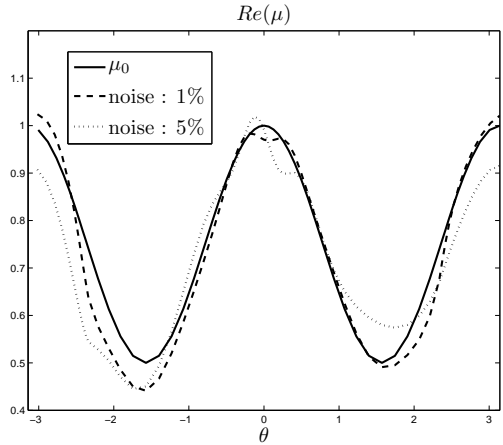}}
 \subfigure[$ \mbox{Error }(1\%) =8 \%\,,\; \mbox{Error }(5\%) =10.8 \%$]{\includegraphics[width =.49\textwidth]{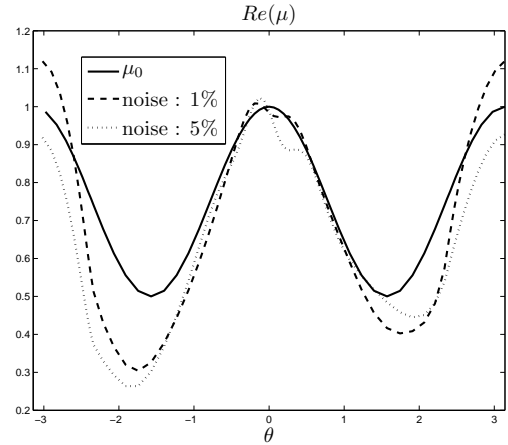}}
\caption{Perturbed obstacle (see Figure \ref{FigGeoPert}) and smooth function $\mu$, wave number $k=9$, $\mu_{init} = 0.7$, $\ld =0$, $\gamma=1\%$ on the left and $3\%$ on the right. }
\label{FigPertSim}
\end{minipage}
\end{figure}
\begin{figure}[h]
\begin{minipage}{\textwidth}
\subfigure[$ \mbox{Error }(1\%) =2.8 \%\,,\; \mbox{Error }(5\%) =11 \%$]{ \includegraphics[width =.49\textwidth]{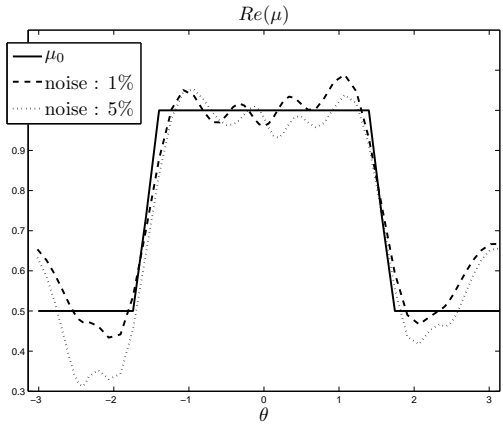}}
 \subfigure[$ \mbox{Error }(1\%) =2.9 \%\,,\; \mbox{Error }(5\%) =10.8 \%$]{\includegraphics[width =.49\textwidth]{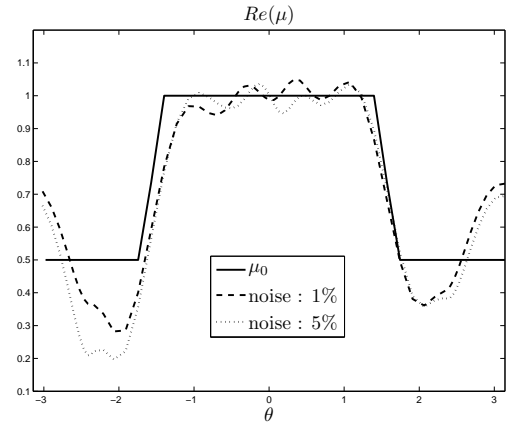}}
\caption{Perturbed obstacle (see Figure \ref{FigGeoPert}) and piecewise function $\mu$, wave number $k=9$, $\mu_{init} = 0.7$, $\ld =0$, $\gamma=1\%$ on the left and $3\%$ on the right. }
\label{FigPertSim_bis}
\end{minipage}
\end{figure}
\section*{Acknowledgement}
The work of Nicolas Chaulet is supported by a DGA grant.
\section*{References}
\bibliographystyle{plain}
\bibliography{bourgeois_chaulet_haddar_rev2}

\begin{thebibliography}{10}

\bibitem{AlDeRo03}
G.~Alessandrini, L.~Del Piero, and L.~Rondi.
\newblock Stable determination of corrosion by a single electrostatic boundary
  measurement.
\newblock {\em Inverse Problems}, 19(4):973, 2003.

\bibitem{AleVes05}
G.~Alessandrini and S.~Vessella.
\newblock Lipschitz stability for the inverse conductivity problem.
\newblock {\em Advances in Applied Mathematics}, 35(2):207--241, 2005.

\bibitem{All}
G.~Allaire.
\newblock {\em Conception optimale de structures}.
\newblock Springer-Verlag, 2007.

\bibitem{BenLem96}
A.~Bendali and K.~Lemrabet.
\newblock The effect of a thin coating on the scattering of a time-harmonic
  wave for the helmholtz equation.
\newblock {\em SIAM J. Appl. Math.}, 56:1664--1693, December 1996.

\bibitem{Bou}
L.~Bourgeois.
\newblock About stability and regularization of ill-posed elliptic cauchy
  problems: the case of ${C}^{1,1}$ domains.
\newblock {\em M2AN}, 44-4:715--735, 2010.

\bibitem{BoChHa10}
L.~Bourgeois, N.~Chaulet, and H.~Haddar.
\newblock Identification of generalized impedance boundary conditions: some
  numerical issues.
\newblock Technical Report 7449, INRIA, 2010.

\bibitem{BouHad}
L.~Bourgeois and H.~Haddar.
\newblock Identification of generalized impedance boundary conditions in
  inverse scattering problems.
\newblock {\em Inverse Problems and Imaging}, 4(1):19--38, 2010.

\bibitem{Bus96}
I.~Bushuyev.
\newblock Stability of recovering the near-field wave from the scattering
  amplitude.
\newblock {\em Inverse Problems}, 12(6):859, 1996.

\bibitem{CakCol}
F.~Cakoni and D.~Colton.
\newblock {\em Qualitative Methods in Inverse Scattering Theory}.
\newblock Springer-Verlag, 2006.

\bibitem{CaCoMo}
F.~Cakoni, D.~Colton, and P.~Monk.
\newblock The determination of the surface conductivity of a partially coated
  dielectric.
\newblock {\em SIAM J. Appl. Math.}, 65(3):767--789, 2005.

\bibitem{CaCoMo10}
F.~Cakoni, D.~Colton, and P.~Monk.
\newblock The determination of boundary coefficients from far field
  measurements.
\newblock {\em J. Integral Equations Appl.}, 22(2):167--191, 2010.

\bibitem{ColKre}
D.~Colton and R.~Kress.
\newblock {\em Inverse acoustic and electromagnetic scattering theory},
  volume~93 of {\em Applied Mathematical Sciences}.
\newblock Springer-Verlag, second edition, 1998.

\bibitem{DiCRon03}
M.~Di Cristo and L.~Rondi.
\newblock Examples of exponential instability for inverse inclusion and
  scattering problems.
\newblock {\em Inverse Problems}, 19(3):685, 2003.

\bibitem{DurHadJol06}
M.~Durufl{\'e}, H.~Haddar, and P.~Joly.
\newblock Higher order generalized impedance boundary conditions in
  electromagnetic scattering problems.
\newblock {\em C.R. Physique}, 7(5):533--542, 2006.

\bibitem{GilTru}
D.~Gilbarg and N.S. Trudinger.
\newblock {\em Elliptic partial differential equations of second order}.
\newblock Springer-Verlag, second edition, 1998.

\bibitem{GriKir08}
N.~Grinberg and A.~Kirsch.
\newblock {\em The factorization method for inverse problems}.
\newblock Oxford University Press, 2008.

\bibitem{HadJol02}
H.~Haddar and P.~Joly.
\newblock Stability of thin layer approximation of electromagnetic waves
  scattering by linear and nonlinear coatings.
\newblock {\em J. Comput. Appl. Math.}, 143:201--236, June 2002.

\bibitem{HadJolNgu05}
H.~Haddar, P.~Joly, and H.-M. Nguyen.
\newblock Generalized impedance boundary conditions for scattering by strongly
  absorbing obstacles: the scalar case.
\newblock {\em Math. Models Methods Appl. Sci.}, 15(8):1273--1300, 2005.

\bibitem{HeKinSin09}
L.~He, S.~Kindermann, and M.~Sini.
\newblock Reconstruction of shapes and impedance functions using few far-field
  measurements.
\newblock {\em Journal of Computational Physics}, 228(3):717--730, 2009.

\bibitem{HenPie}
A.~Henrot and M.~Pierre.
\newblock {\em Variation et optimisation de formes}, volume~48 of {\em
  Math{\'e}matiques \& Applications}.
\newblock Springer-Verlag, 2005.

\bibitem{Ing97}
G.~Inglese.
\newblock An inverse problem in corrosion detection.
\newblock {\em Inverse Problems}, 13(4):977, 1997.

\bibitem{Isa98}
V.~Isakov.
\newblock {\em Inverse Problems for Partial Differential Equations}.
\newblock Springer, 1998.

\bibitem{Kre}
R.~Kress.
\newblock {\em Linear integral equations}, volume~82 of {\em Applied
  Mathematical Sciences}.
\newblock Springer-Verlag, 1989.

\bibitem{Lab}
C.~Labreuche.
\newblock Stability of the recovery of surface impedances in inverse
  scattering.
\newblock {\em J. Math. Anal. Appl.}, (231):161--176, 1999.

\bibitem{LioMag}
J.~L. Lions and E.~Magenes.
\newblock {\em Probl{\`e}mes aux limites non homog{\`e}nes et application},
  volume~1.
\newblock Dunod, 1968.

\bibitem{LiuNakSin07}
J.~J. Liu, G.~Nakamura, and M.~Sini.
\newblock Reconstruction of the shape and surface impedance from acoustic
  scattering data for an arbitrary cylinder.
\newblock {\em SIAM J. Appl. Math.}, 67(4):1124--1146, 2007.

\bibitem{Man01}
N.~Mandache.
\newblock Exponential instability in an inverse problem for the schr\"odinger
  equation.
\newblock {\em Inverse Problems}, 17(5):1435--1444, 2001.

\bibitem{NakSin07}
G.~Nakamura and M.~Sini.
\newblock Obstacle and boundary determination from scattering data.
\newblock {\em SIAM Journal on Mathematical Analysis}, 39(3):819--837, 2007.

\bibitem{Ned01}
J.-C. N{\'e}d{\'e}lec.
\newblock {\em Acoustic and Electromagnetic Equations}.
\newblock Springer-Verlag, 2001.

\bibitem{Phu03}
K.-D. Phung.
\newblock Remarques sur l'observabilit{\'e} pour l'{\'e}quation de {L}aplace.
\newblock {\em ESAIM: COCV}, 9:621--635, 2003.

\bibitem{Ser}
P.~Serranho.
\newblock A hybrid method for inverse scattering for shape and impedance.
\newblock {\em Inverse Problems}, 22:663--680, 2006.

\bibitem{Sin06}
E.~Sincich.
\newblock Stable determination of the surface impedance of an obstacle by far
  field measurements.
\newblock {\em SIAM J. Appl. Math.}, 38(2):434--451, 2006.

\bibitem{Sin07}
E.~Sincich.
\newblock Lipschitz stability for the inverse {R}obin problem.
\newblock {\em Inverse Problems}, 23:1311--1326, 2007.

\bibitem{FF++}
www.freefem.org/ff++.
\newblock {\em FreeFem++}.

\end{thebibliography}
\end{document}